\newcommand{\pzb}{\subseteq}
\newcommand{\ddn}{\Downarrow}
\newcommand{\tze}{\mid}
\newcommand{\Sup}{\mathcal{S}}
\newcommand{\yoneda}{\textsf{y}}
\newcommand{\unit}{\mathbf{1}}
\newcommand{\dwa}{\mathbf{2}}
\newcommand{\Rw}{\Rightarrow}
\newcommand{\hrw}{\hookrightarrow}
\DeclareMathOperator{\ev}{ev}
\DeclareMathOperator{\ForgetToV}{S}
\DeclareMathOperator{\ForgetToVAd}{A}
\DeclareMathOperator{\MFunctor}{M}
\newcommand{\mate}[1]{\,^\ulcorner\! #1^\urcorner}
\newcommand{\catfont}[1]{\mathbf{#1}}
\newcommand{\V}{\mathcal{Q}}
\newcommand{\two}{\catfont{2}}
\newcommand{\SET}{\catfont{Set}}
\newcommand{\TOP}{\catfont{Top}}
\newcommand{\AP}{\catfont{App}}
\newcommand{\ORD}{\catfont{Ord}}
\newcommand{\MET}{\catfont{Met}}
\newcommand{\Mat}[1]{#1\text{-}\catfont{Rel}}
\newcommand{\Mod}[1]{#1\text{-}\catfont{Dist}}
\newcommand{\Cat}[1]{#1\text{-}\catfont{Cat}}
\newcommand{\Idl}{\catfont{Idl}}
\newcommand{\FC}{\catfont{FC}}
\newcommand{\FSW}{\catfont{FSW}}
\newcommand{\relto}{{\longrightarrow\hspace*{-2.5ex}{\mapstochar}\hspace*{2.3ex}}}
\newcommand{\modto}{{\longrightarrow\hspace*{-2.8ex}{\circ}\hspace*{1.2ex}}}
\newcommand{\kto}{\relbar\joinrel\rightharpoonup}
\newcommand{\krelto}{\,{\kto\hspace*{-2.5ex}{\mapstochar}\hspace*{2.6ex}}}
\newcommand{\kmodto}{\,{\kto\hspace*{-2.8ex}{\circ}\hspace*{1.3ex}}}
\newcommand{\kleisli}{\circ}
\newcommand{\monadfont}[1]{\mathbb{#1}}
\newcommand{\mT}{\monadfont{T}}
\newcommand{\mU}{\monadfont{U}}
\newcommand{\Ultra}{\monadfont{U}}
\newcommand{\Tth}{(\Ultra,\V)}
\newcommand{\op}{\mathrm{op}}
\newcommand{\blackright}{\mbox{ $-\!\mbox{\footnotesize $\bullet$}$ }}
\newcommand{\blackleft}{\mbox{ $\mbox{\footnotesize $\bullet$}\!-$ }}
\newcommand{\whiteright}{\multimap}
\newcommand{\whiteleft}{\mbox{ $\mbox{\footnotesize $\circ$}\!-$ }}
\newcommand{\homkleisliright}{\whiteleft}
\newcommand{\homcompleft}{\blackright}
\theoremstyle{definition}
\newtheorem{definition}{Definition}[section]
\theoremstyle{plain}
\newtheorem{theorem}[definition]{Theorem}
\newtheorem{lemma}[definition]{Lemma}
\newtheorem{corollary}[definition]{Corollary}
\newtheorem{proposition}[definition]{Proposition}
\theoremstyle{remark}
\newtheorem{remark}[definition]{Remark}
\newtheorem{convention}[definition]{Convention}
\newtheorem{example}[definition]{Example}
\begin{document}

\pagestyle{plain}

\begin{abstract}
Our work is a fundamental study of the notion of approximation in $\V$-categories and in $(\Ultra,\V)$-categories, for a quantale $\V$ and the ultrafilter monad $\mU$. We introduce auxiliary, approximating and Scott-continuous distributors, the way-below distributor, and continuity of $\V$- and $(\Ultra,\V)$-categories. We fully characterize continuous $\V$-categories (resp. $(\Ultra,\V)$-categories) among all cocomplete $\V$-categories (resp.\ $(\Ultra,\V)$-categories) in the same ways as continuous domains are characterized among all dcpos. By varying the choice of the quantale $\V$ and the notion of ideals, and by further allowing the ultrafilter monad to act on the quantale, we obtain a flexible theory of continuity that applies to partial orders and to metric and topological spaces. We demonstrate on examples that our theory unifies some major approaches to quantitative domain theory.
\end{abstract}

\keywords{Quantitative domain theory, continuous domain, way-below, Scott-continuity, quantale-enriched category, distributor, complete distributivity}
\subjclass[2010]{06B35, 06D10, 06F07, 18B35, 18D20, 68Q55}

\title{Approximation in quantale-enriched categories}
\author{Dirk Hofmann}
\thanks{The first author acknowledges partial financial assistance by Centro de Investiga\c{c}\~ao e Desenvolvimento em Matem\'atica e Aplica\c{c}\~oes da Universidade de Aveiro/FCT and the project MONDRIAN (under the contract PTDC/EIA-CCO/108302/2008).}
\address{Departamento de Matem\'{a}tica\\ Universidade de Aveiro\\3810-193 Aveiro\\ Portugal}
\email{dirk@ua.pt}
\author{Pawe\l\ Waszkiewicz}
\thanks{The second author is supported  by grant number N206 3761 37 funded by Polish Ministry of Science and Higher Education.}
\address{Theoretical Computer Science, Jagiellonian University, ul.Prof.S.\L ojasiewicza 6, 30-348 Krak\'{o}w, Poland}
\email{Pawel.Waszkiewicz@tcs.uj.edu.pl}

\maketitle

\section{Introduction}

\subsection*{Quantitative domain theory}

The contrast between the needs of denotational semantics and the
modelling power that domain theory can offer became well visible
when in the early Eighties de Bakker and Zucker \cite{deBZ82}
presented a quantitative model of concurrent processes based on
metric spaces. Their work was later further generalized by America
and Rutten \cite{america89} who considered a general problem of
solving recursive domain equations in the category of metric spaces.
Since that time much effort has been spent on unification of
domain-theoretic and metric approaches to denotational semantics,
which in practice meant a search for a class of mathematical
structures that can serve as (quantitative) domains of computation.
As an early example, Smyth proposed a framework based on
quasi-metrics and quasi-uniformities \cite{smyth88}. Both of these
quantitative structures differ from their ``classical'' counterparts
by discarding symmetry. However, in Smyth's opinion, in order to
accommodate semantic domains used in computer science, a further
reformulation of basic definitions involving limits and completeness
was necessary. Consequently, he suggested bicomplete totally bounded
quasi-uniform spaces \cite{smyth91} as quantitative domains, in his
next paper \cite{smyth94} reworked the definition of completeness
(that is named Smyth-completeness since then), and introduced so
called topological quasi-uniform spaces, in which the quasi-uniform
structure is linked to an auxiliary topology by some additional
axioms. Smyth's insight immediately inspired further studies in this
direction \cite{sunderhauf93, kunzi93, sunderhauf95, sunderhauf97}.

Another important structures that unify partial orders and metric
spaces are {\em $\V$-continuity spaces} introduced by Kopperman
\cite{kopperman88}. The idea was to use a non-symmetric distance
that takes values in a set $\V$ with a rich order structure. Flagg
\cite{continuity} suggested that $\V$ should be a {\em value
quantale}, that is, a completely distributive unital quantale in
which the set of elements that are approximated by the unit is a
filter. Soon both authors published a joint paper
\cite{flagg:quantales} summarizing their research.

Since F.W. Lawvere's famous 1973 paper \cite{Law_MetLogClo} it is
well-known that both ordered sets and metric spaces can be viewed as
$\V$-enriched categories of Eilenberg and Kelly \cite{eilenberg66,
kelly}: the former ones for the quantale $\dwa$, the latter ones for
the quantale $[0,\infty]$. Clearly Kopperman and Flagg's
$\V$-continuity spaces are exactly categories enriched in a value
quantale $\V$.

Smyth's and Lawvere's ideas have been  combined together in a series
of papers by the Amsterdam research group at CWI \cite{bonsangue96,
rutten98, rutten:yoneda} that showed, among other things, how to
construct the (sequential Yoneda) completion and powerdomains for
$[0,\infty]$-categories. Their work has been complemented by K\"unzi
and Schellekens in \cite{kunzischellekens} (they proposed the
netwise version of the Yoneda completion). A completion by flat
modules for gmses (resp.\! completion by type 1 filters) was further
discussed by Vickers \cite{vickers} (resp.\! by Schmitt
\cite{schmitt}). Independently, Flagg, S\"underhauf and Wagner
\cite{flagg:essence, FSW} studied ideal completion of
$\V$-continuity spaces and they in effect demonstrated that for
$\V=[0,\infty]$ their results phrased in terms of ideals (called
FSW-ideals here) agree with results of the CWI group phrased in
terms of Cauchy nets. They also gave a representation theory for
{\em algebraic} $\V$-continuity spaces.

Furthermore, in \cite{wagner94} and later in \cite{wagner97}, Wagner
proposed a  framework for solving recursive domain equations in
certain complete $\V$-categories, thereby unifying original attempts
of Scott \cite{Book_ContLat, SP82} and de Bakker and Zucker
\cite{deBZ82} that in the Eighties seemed to be fundamentally
different. Since then these ideas of domain-theoretic origin have
been successfully applied in semantics. Most notably, solving
recursive equations over metric spaces proved to be one of the
fundamental tools in semantics of concurrency, see e.g. \cite{vBr01,
vBW05, vBMOW05, vBHMW07}.

\subsection*{Our motivation and related work}
A central part of domain theory revolves around a notion of {\em
approximation}, which provides a mathematical content to the idea
that infinite objects are given in some coherent way as limits of
their finite approximations. This leads to considering, not
arbitrary complete partial orders, but the continuous ones. Our work
is thought as a fundamental study of the notion of approximation in
$\V$-categories, that generalizes domain-theoretic one. Our
exposition is categorical but kept close to domain-theoretic
language of \cite{abramsky94, Book_ContLat}. Consequently, we speak
about auxiliary, approximating and Scott-continuous
$\V$-distributors, about the way-below $\V$-distributor, and we
introduce continuous $\V$-categories. The generalization from domain
theory to $\V$-categories that we propose proceeds on various
levels, as we shall explain below, comparing our paper to related
work in the area.

\subsubsection*{Relative continuity}
There is no canonical choice for $\V$-categorical counterparts of
even the most fundamental notions of domain theory. For instance, as
we saw above, order ideals can be generalized to several
non-equivalent concepts on the $\V$-level (e.g. forward Cauchy nets,
flat modules, FSW-ideals) which nevertheless yield the same
definitions in both metric and order-theoretic cases \cite{FSW,
rutten:yoneda, vickers}. Consequently, one obtains different notions
of (co)completeness for $\V$-categories based on a specific choice
of ideals. The starting point of our paper is a conviction that one
has not to make this choice right at the beginning, and we study
cocompleteness and continuity of $\V$-categories relative to an
abstract class of ideals $J$ subject to suitable axioms.
Accordingly, we speak about $J$-cocompleteness and $J$-continuity.
As far as there are many papers in the literature dealing with
relative cocompleteness \cite{albert:kelly, kelly:schmitt,
CH_CocomplII, zhang07}, we are not aware of any systematic study of
relative continuity in $\V$-categories. We therefore introduce a
concept of a $J$-continuous $\V$-category and develop its basic
characterisations. For appropriate choices of $\V$ and $J$ we
recover many of the well-known classical structures: continuous
domains, completely distributive complete lattices, Cauchy-complete
metric spaces but there remain many more settings where the meaning
of $J$-continuity is still to be explored.

\subsubsection*{Continuous categories}
The difference between continuous categories of Johnstone and Joyal
\cite{joh82, kos86, ALR03} and our $J$-continuous $\V$-categories is
that the former are $\SET$-based and their continuity is not
relative to the choice of ideals. On the other hand, our
Theorem~\ref{thm:cont}(i) confirms that in essence we introduce
continuity in the same way --- by the requirement that the left
adjoint to the Yoneda embedding itself has a left adjoint.

\subsubsection*{Other relevant literature}
In \cite{Stu_Dynamics} Stubbe considers totally continuous
cocomplete $\V$-categories enriched over a quantaloid $\V$. On the
one hand, a significant part of results from \cite{Stu_Dynamics} can
be recovered from our paper as soon as we fix $J$ to be the class of
all $\V$-distributors. On the other hand, Stubbe shows that instead
with quantales it is possible to work with more general quantaloids.

\subsubsection*{$(\mathbb{U},\V)$-categories}
In the last part of our paper we propose a further substantial
generalization of continuous domains by considering so called
$(\mathbb{U},\V)$-categories, where the ultrafilter monad $\Ultra$
is allowed to act on the quantale $\V$. As an example we note that
$(\Ultra,\dwa)$-categories are precisely topological spaces. In
Section \ref{ContTV} we introduce $J$-continuous
$(\Ultra,\V)$-categories and show that defining approximation ---
while still possible `locally' --- becomes difficult globally, which
is of course a price paid for such a generous generality. We close
the paper by giving a full characterization of $J$-continuous
$(\Ultra,\V)$-categories among all $(\Ultra,\V)$-categories in the
same ways as continuous domains are characterized among all dcpos.

It is worth mentioning that the ultrafilter monad $\Ultra$ is made
compatible with the quantale structure $\V$ by the convergence
structure of a compact topology on $\V$. Under some natural
assumptions this topology happens to be the Lawson topology, and
this observation simplifies the presentation of our results.

\section{Preliminaries}
\subsection{Quantales}

A $\V = (Q,\leqslant,\otimes,\unit)$ is a completely distributive
commutative unital quantale (in short: a quantale) such that the
unit element $\unit$ is greatest with respect to the order on $Q$.
We also assume that $\bot\neq \unit$.

Examples of quantales include: the two element lattice $\dwa =
(\{\bot,\unit\},\leqslant, \wedge,\unit)$; the unit interval $[0,1]$
in the order opposite to the natural one, with truncated addition as
tensor; the extended real half line $[0,\infty]$ in the order
opposite to the natural one, with addition as tensor. In general,
every frame with infimum as tensor is a quantale.

\subsection{$\V$-categories}
A \emph{$\V$-category} is a set $X$ with a map (called {\em the
structure of} $X$) $X\colon X\times X\to Q$ satisfying
$\unit\leqslant X(x,x)$ (reflexivity), and $X(x,y)\otimes
X(y,z)\leqslant X(x,z)$ (transitivity), for all $x,y,z\in X$. A {\em
$\V$-functor} $f\colon X\to Y$ is a function that satisfies
$X(x,y)\leqslant Y(fx,fy)$ for all $x,y\in X$. The resulting
category $\Cat{\V}$ is isomorphic to the category $\ORD$ of
(pre)ordered sets if $\V=\two$, to the category $\MET$ of
generalized metric spaces \cite{Law_MetLogClo} if $\V=[0,\infty]$ or
$\V=[0,1]$ (metrics bounded by $1$ in the latter case). Furthermore,
$\V$ with its internal hom becomes a $\V$-category. Moreover, any
$\V$-category has its dual $X^{op}$ defined as $X^{op} (x,y) =
X(y,x)$ for all $x,y\in X$.

$\Cat{\V}$ admits a tensor product $X\otimes
Y((x,y),(x',y'))=X(x,x')\otimes Y(y,y')$, and internal hom:
$Y^X(f,g)=\bigwedge_{x\in X}Y(fx,gx)$. The internal hom describes
the pointwise order if $\V=\two$,  and the non-symmetrized
$\sup$-metric if $\V=[0,\infty]$ or $\V=[0,1]$. Since tensor is left
adjoint to internal hom, every $\V$-functor $g\colon X\otimes Y\to
Z$ has its {\em exponential mate} $\mate{g}\colon Y\to Z^X$. For
example, the structure of $X$ is always a $\V$-functor of type
$X^{op} \otimes X\to Q$, and its exponential mate $\yoneda_X\colon
X\to Q^{X^{op}}$, $x\mapsto X(-,x)$ is a $\V$-functor called the
{\em Yoneda embedding}. The Yoneda Lemma then states that for any
$\phi\in \widehat{X}$ (where $\widehat{X} := Q^{X^{op}})$ we have
$\phi x = \widehat{X}(\yoneda_X x,\phi)$.

\subsection{$\V$-distributors}
A $\V$-functor of type $X^{op} \otimes Y\to Q$ is called a {\em
$\V$-distributor}. Examples:
\begin{itemize}
\item[---] The structure of any $\V$-category $X$ is a
$\V$-distributor.

\item[---] Any two $\V$-distributors $\phi\colon X^{op}\otimes Y\to Q$ and
$\psi\colon Y^{op} \otimes Z\to Q$ can be composed to give a
$\V$-distributor of type $X^{op} \otimes Z\to Q$:
$$(\psi \cdot \phi) (x,z) := \bigvee_{y\in Y} (\phi(x,y)\otimes \psi(y,z)).$$
Therefore we think of $\phi\colon X^{op}\otimes Y\to Q$ as an arrow
$\phi\colon X\modto Y$, which, by the above, can be composed with
$\psi\colon Y\modto Z$ to give $\psi\cdot \phi\colon X\modto Z$. Note
also that $Y\cdot \phi = \phi = \phi\cdot X$.

\item[---] Any function $f\colon X\to Y$ gives rise to two
$\V$-distributors, namely $f_*\colon X\modto Y$, $f_*(x,y) =
Y(fx,y)$ and $f^*\colon Y\modto X$, $f^*(y,x) = Y(y,fx)$.
\item[---] For any $\phi\colon X\modto Y$ and $\psi \colon Z\modto
Y$ we define {\em lifting of $\psi$ along $\phi$} to be the
following $\V$-distributor:
$$(\phi\homcompleft \psi)(z,x)=\bigwedge_{y\in Y}Q(\phi(x,y),\psi(z,y)).$$
\end{itemize}

We further observe that for any element $x\colon 1\to X$ ($1$ is the
one-element $\V$-category that should not be confused with the unit
of the quantale), the distributor $x^*\colon X\modto 1$ is in fact
the same as the $\V$-functor $\yoneda_X x := X(-,x)\in \widehat{X}$.

In $\ORD$, distributors of type $X\modto 1$ are precisely
(characteristic maps of) lower sets, and distributors of type
$1\modto X$ are upper sets of the poset $X$.

On the other hand, in $\MET$, any Cauchy sequence
$(x_n)_{n\in\omega}$ induces a distributor $\phi\colon 1\modto X$
via $\phi(x) = \lim_{n\to\infty} X(x_n,x)$, and a distributor
$\psi\colon X\modto 1$ via $\psi(x) = \lim_{n\to\infty} X(x,x_n)$.
Observe that $\psi\cdot \phi \leqslant 0$ and $\phi\cdot
\psi\geqslant X$ in the pointwise order. Conversely, any pair of
distributors that satisfies the above equations comes from some
Cauchy sequence on $X$.

More generally, we will say that $\V$-distributors $\phi\colon
Z\modto X$, $\psi \colon X\modto Z$ are adjoint iff $\phi\cdot
\psi\leqslant X$ and $\psi\cdot\phi\geqslant Z$. In this case we say
that $\phi$ is a left adjoint to $\psi$ and $\psi$ is a right
adjoint to $\phi$.

\section{$J$-cocomplete $\V$-categories}

Suppose for each $\V$-category $X$ there is given a collection $JX$
of $\V$-distributors of type $X\modto 1$ (called thereafter {\em $J$-ideals}) such that $JX$ contains $x^*\in JX$, for every $x\in X$. This condition on $JX$ tells us in fact that the Yoneda
embedding $\yoneda_X\colon X\to \widehat{X}$ corestricts to $JX$. We
now define $X$ to be {\em $J$-cocomplete} if $\yoneda_X\colon X\to
JX$ has a left adjoint in $\Cat{\V}$. That is, there must exist a
$\V$-functor $\Sup_X\colon JX\to X$ such that for all $\phi\in JX$
and all $x\in X$:
\begin{equation}
\label{eq:ideal}
X(\Sup_X\phi, x) = \widehat{X}(\phi,\yoneda_X x).
\end{equation}
The element $\Sup_X\phi\in X$ is called the {\em supremum} of
$\phi$.

If $JX = \widehat{X}$, then evidently $\widehat{X}$ itself is
cocomplete (meaning: $\widehat{X}$-cocomplete), since the supremum
of $\psi\colon \widehat{X}\modto 1$ is $\psi\cdot \yoneda_X^*$. For
example, if $\V=\dwa$, then $\widehat{X}$ is a poset of lower
subsets of the poset $X$ ordered by inclusion, $\psi$ is a lower set
of lower sets of $X$, and the supremum of $\psi$ is nothing else but
$\bigcup\psi$.

Unfortunately, in general $JX$ is not itself $J$-cocomplete, since
(in the example above) even if $\phi\in JX$ and $\yoneda^*\in JX$,
their composition may not be an element of $JX$. On the other hand,
closure under composition provides exactly what is needed:

\begin{theorem}
The following are equivalent:
\begin{enumerate}
\item for all $\psi\in JJX$, $\psi\cdot \yoneda^*\in JX$,
\item the inclusion $JX\hookrightarrow \widehat{X}$ is closed under
$J$-suprema,
\item $\Mod{J} := \{\phi\colon X\modto Y\tze \forall y\in Y\ \ y^*\cdot
\phi\in JX\}$ is closed under composition.
\end{enumerate}
\end{theorem}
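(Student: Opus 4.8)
The plan is to route everything through a single factorization of $J$-distributors, establishing the cycle $(3)\Rightarrow(1)\Leftrightarrow(2)\Rightarrow(3)$ and using throughout the graph/cograph distributors $f_*\colon X\modto Y$, $f^*\colon Y\modto X$ of a $\V$-functor $f$ together with the adjunction $f_*\dashv f^*$. First I would reformulate the data. Since $y^*\cdot\phi=\phi(-,y)$, a distributor $\phi\colon X\modto Y$ lies in $\Mod J$ precisely when each column $\phi(-,y)$ is a $J$-ideal, equivalently when its mate $\bar\phi\colon Y\to\widehat X$, $y\mapsto\phi(-,y)$, corestricts to a $\V$-functor $\bar\phi\colon Y\to JX$. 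Using $(\psi\cdot\phi)(-,z)=\psi(-,z)\cdot\phi$ together with $\psi(-,z)\in JY$ for $\psi\in\Mod J$, a routine argument in both directions shows that $(3)$ is equivalent to the pointwise statement that $\beta\cdot\phi\in JX$ whenever $\beta\in JY$ and $\phi\colon X\modto Y$ lies in $\Mod J$; I will use $(3)$ in this form.

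The technical heart is the factorization $\phi=\bar\phi^*\cdot\yoneda^*$ for $\phi\in\Mod J$, where $\yoneda^*\colon X\modto JX$, $\yoneda^*(x,\chi)=\chi x$, is the graph of the corestricted Yoneda embedding and $\bar\phi^*\colon JX\modto Y$ is the cograph of $\bar\phi$. This is the one-line Yoneda identity $\phi(x,y)=\bar\phi(y)(x)=JX(\yoneda x,\bar\phi y)$, with reflexivity and transitivity collapsing the supremum defining $\bar\phi^*\cdot\yoneda^*$. From it, $(3)\Rightarrow(1)$ is immediate: the graph $\yoneda^*$ is itself in $\Mod J$ (its column at $\chi\in JX$ is $\chi$), so the pointwise form of $(3)$ applied with $\beta=\psi\in JJX$ and $\phi=\yoneda^*$ yields $\psi\cdot\yoneda^*\in JX$.

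For $(1)\Leftrightarrow(2)$ I would unfold ``closed under $J$-suprema'' for the fully faithful inclusion $\iota\colon JX\hookrightarrow\widehat X$. Pushing $\psi\in JJX$ into the cocomplete category $\widehat X$ gives the weight $\psi\cdot\iota^*$, whose supremum there is $(\psi\cdot\iota^*)\cdot\yoneda_X^*$ by the supremum formula for $\widehat X$ recalled above. Full faithfulness of $\iota$ gives $\iota^*\cdot\iota_*=\mathrm{id}_{JX}$, and $\yoneda_X=\iota\circ\yoneda$ gives $\yoneda_X^*=\iota_*\cdot\yoneda^*$; substituting collapses this supremum to $\psi\cdot\yoneda^*$. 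Hence $JX$ being closed under these suprema says exactly that $\psi\cdot\yoneda^*\in JX$ for every $\psi\in JJX$, which is $(1)$.

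Finally $(1)\Rightarrow(3)$: given $\beta\in JY$ and $\phi\colon X\modto Y$ in $\Mod J$, the factorization gives $\beta\cdot\phi=(\beta\cdot\bar\phi^*)\cdot\yoneda^*$, so by $(1)$ it suffices to show $\beta\cdot\bar\phi^*\in JJX$. I expect this to be the \textbf{main obstacle}. Here $\beta\cdot\bar\phi^*$ is the image of the $J$-ideal $\beta$ under the presheaf action $\widehat{\bar\phi}$ of the $\V$-functor $\bar\phi\colon Y\to JX$, so the step needs precisely that $J$-ideals are stable under the functorial action of $\V$-functors, i.e.\ that $\widehat g$ restricts to $Jg\colon JA\to JB$ (equivalently $\beta\cdot g^*\in JB$ for $\beta\in JA$) for every $\V$-functor $g$. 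Granting this standing property of the chosen notion of ideal, $\beta\cdot\bar\phi^*\in JJX$ and $(1)$ gives $\beta\cdot\phi\in JX$, closing the cycle; securing that stability is the only part I would expect to require genuine input beyond formal manipulation of distributors.
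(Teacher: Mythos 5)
Your cycle $(3)\Rightarrow(1)\Leftrightarrow(2)\Rightarrow(3)$ is structurally the right one, and since the paper states this theorem with no proof at all, your reconstruction is the natural candidate for the intended argument. The unflagged steps all check out under the paper's single standing hypothesis that $x^*\in JX$: the column computation $\chi^*\cdot\yoneda^*=\chi$ (by the Yoneda lemma and fullness of $JX\subseteq\widehat{X}$) showing $\yoneda^*\in\Mod{J}$, the factorization $\phi=\bar{\phi}^*\cdot\yoneda^*$, and the collapse $\mathrm{Sup}_{\widehat{X}}(\psi\cdot\iota^*)=\psi\cdot(\iota^*\cdot\iota_*)\cdot\yoneda^*=\psi\cdot\yoneda^*$ that identifies $(1)$ with $(2)$ are all correct, as is the pointwise reformulation of $(3)$.

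The obstacle you flagged in $(1)\Rightarrow(3)$, however, is a genuine gap --- and, importantly, one that \emph{cannot} be closed from the paper's stated hypotheses, so ``granting this standing property'' leaves the theorem as literally stated unproved. Pushforward stability ($\beta\cdot g^*\in JB$ for every $\V$-functor $g\colon A\to B$ and $\beta\in JA$) does not follow from $(1)$. For a counterexample take $\V=\dwa$ and let $JX$ consist of exactly the principal lower sets $\downarrow\! x$ for every poset $X$, except for one fixed chain $A_0=(\mathbb{N},\leqslant)$, where one additionally puts the non-principal lower set $\beta_0=A_0$ into $JA_0$ (after harmless relabelling ensuring that no poset of the form $JX$ coincides with $A_0$; note for instance that the empty set, an element of $A_0$ under the von Neumann coding, is never a principal lower set). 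Then every $JJX$ consists only of principal ideals $\downarrow\!\chi$, and $(\downarrow\!\chi)\cdot\yoneda^*=\chi\in JX$, so $(1)$ and hence $(2)$ hold for all $X$; but for the embedding $g\colon A_0\to B_0$ into a disjoint order-isomorphic copy $B_0$, both $\beta_0\colon A_0\modto 1$ and $g^*\colon B_0\modto A_0$ lie in $\Mod{J}$ (the columns of $g^*$ are the representables $(ga)^*$), while $\beta_0\cdot g^*$ is the non-principal lower set $B_0\notin JB_0$, so $(3)$ fails. Thus the stability you assume is precisely the axiom missing from the paper's setup, not a routine lemma. Two observations would let you sharpen your write-up: stability is itself a special case of $(3)$, since $g^*\in\Mod{J}$ automatically and $\beta\cdot g^*$ is then a composite of $\Mod{J}$-arrows, so your argument in fact proves the clean equivalence $(3)\Leftrightarrow(1)+\text{stability}$; and every class of ideals the paper actually uses ($\Idl$, $\FC$, $\FSW$, right adjoints, all of $\widehat{X}$) is stable under pushforward, so the theorem is correct for all intended instances once that hypothesis is made explicit.
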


To summarize if $\Mod{J}$ is closed under 'upper stars' and
composition, then $JX$ is $J$-cocomplete, for every $X$. In this case
$\Mod{J}$ is called a {\em saturated class of weights}.

\begin{convention}
In the rest of our paper we consider only saturated classes of
weights.
\end{convention}

\noindent Relative cocompleteness allows for a unified presentation
of seemingly unrelated notions of order- and metric completeness:

\begin{example}
\label{ex:ord} For $\V=\two$, we consider all $\V$-distributors of
type $X\modto 1$ corresponding to order-ideals in $X$ (i.e. directed
and lower subsets of $X$), and write $J= \Idl$. Then $X$ is
$\Idl$-cocomplete iff $X$ is a directed-complete.
\end{example}
\begin{example}
\label{ex:met} For $\V=[0,\infty]$ we consider all $\V$-distributors
of type $X\modto 1$ corresponding to ideals in $X$ in the sense of
\cite{rutten:yoneda}, and write $J=\FC$. These ideals in turn
correspond to equivalence classes of forward Cauchy sequences on
$X$. Hence, $X$ is $\FC$-cocomplete if and only if each forward
Cauchy sequence on $X$ converges iff $X$ is sequentially Yoneda
complete.
\end{example}

\begin{example}
For any $\V$ we can choose $J$ to consist of all right adjoint
$\V$-distributors (i.e. $\V$-distributors that have left adjoints).
Recall from \cite{Law_MetLogClo} that, for $\V = [0,\infty]$ and for
$\V=[0,1]$, a right adjoint $\V$-distributor $X\modto 1$ corresponds
to an equivalence class of Cauchy sequences on $X$. A generalized
metric space $X$ is $J$-cocomplete iff each Cauchy sequence on $X$
converges.
\end{example}

\begin{example}
\label{ex:FSWideals} For a completely distributive quantale $\V$ and
any $\V$-category $X$, a $\V$-distributor $\psi\colon X\modto 1$ is
a $\FSW$-ideal if: (a) $\bigvee_{z\in X} \psi z = \unit$, and (b)
for all $e_1,e_2,d\prec \unit$, for all $x_1,x_2\in X$, whenever
$e_1\prec \psi x_1$ and $e_2\prec \psi x_2$, then there exists $z\in
X$ such that $d\prec \psi z$, $e_1\prec X(x_1,z)$ and $e_2\prec
X(x_2,z)$. Now for $\V = [0,\infty]$ $\FSW$-ideals on $X$ are in a
bijective correspondence with equivalence classes of forward Cauchy
nets on $X$ \cite{FSW}; for $\V=\dwa$, $\FSW$-ideals are
characteristic maps of order-ideals on $X$. Therefore this example
unifies Examples \ref{ex:ord}, \ref{ex:met}.
\end{example}

\noindent Further examples are mentioned in \cite{schmitt,CH_CocomplII, zhang}.

\section{$J$-continuous $\V$-categories}
\label{section:JcocoVcat}

We now come to the main subject of this paper and introduce $J$-continuous $\V$-categories that provide generalization for many structures that play a major role in theoretical computer science, e.g. continuous domains, complete metric spaces, or completely distributive complete lattices.

Let $J_SX$ be a subset of $JX$ consisting of these $J$-ideals that
have suprema, i.e. $\phi\in J_SX$ iff there exists $\Sup_X\phi\in X$
such that the equation (\ref{eq:ideal}) holds for all $x\in X$.
Observe that this enables us to consider, for any $\V$-category $X$,
the $\V$-functor $\Sup_X\colon J_SX\to X$. Moreover, we note that by
the  Yoneda lemma, for any $x\in X$, $\Sup\yoneda_X x = x$ and hence
$\yoneda_X\colon X\to JX$ further corestricts to $\yoneda_X\colon
X\to J_SX$.

\begin{convention}
In what follows we will drop the indices in $\Sup_X$ and $\yoneda_X$
if the context allows us to do so. \end{convention}

\begin{definition}
A $\V$-category $X$ is \emph{$J$-continuous} if the supremum
$\Sup\colon J_SX\to X$ has a left adjoint.
\end{definition}

Note that any $\V$-functor of type $X\to J_SX$ corresponds to a
certain $\V$-distributor $X\modto X$ belonging to $J$. Hence, $X$ is
$J$-continuous if and only if there exists a $\V$-distributor
$\Downarrow:X\modto X$ necessarily in $J$ so that, moreover,
$\mate{\Downarrow}$ is of type $X\to J_SX$ and is left adjoint to
$\Sup\colon J_SX\to X$.

Let us locate $\Downarrow$ among other $\V$-distributors of the same
type. Firstly, for any $\V$-distributor $v\colon X\modto X$ one has:
\begin{align*}
\forall\psi\in J_SX\ \ (\mate{v}\cdot\Sup(\psi)\leqslant\psi)
&\ \ \mathrm{iff} \ \ \forall\psi\in J_SX\ \forall x\in X\ \ (v(x,\Sup\psi)\leqslant\psi x)\\
&\ \ \mathrm{iff} \ \ \forall\psi\in J_SX\ \forall x\in X\ \ ((\Sup^*\cdot v)(x,\psi)\leqslant\yoneda_*(x,\psi))\\
&\ \ \mathrm{iff} \ \ \Sup^*\cdot v\leqslant\yoneda_*.
\end{align*}
In particular, $\Sup^*\cdot\Downarrow\leqslant\yoneda_*$, and
$\Downarrow:X\modto X$ is the largest such $\V$-distributor
since, for every $\V$-distributor $v:X\modto X$,
\begin{align*}
\Sup^*\cdot v\leqslant\yoneda_* &\ \ \mathrm{implies} \ \ \forall x\in
X\ \ ((\mate{v}\cdot\Sup)(\mate{\Downarrow}x)\leqslant
\mate{\Downarrow}x)\\
&\ \ \mathrm{implies} \ \ \forall x\in
X\ \ \mate{v}x\leqslant \mate{\Downarrow}x\\
&\ \ \mathrm{implies} \ \  v\leqslant\ \Downarrow.
\end{align*}

We have identified $\Downarrow:X\modto X$ as the lifting $\ddn =
\Sup^* \blackright \yoneda_*$ of $\yoneda_*:X\modto J_SX$ along
$\Sup^*\colon X\modto J_SX$. Of course, this lifting exists in any
$\V$-category and can be studied in its own right. In
Section~\ref{way-below-dist} we will do so, and give conditions which guarantee
that it provides a left adjoint to $\Sup\colon J_SX\to X$.

Turning to the classical case $\V=\two$ and $J=\Idl$, the distributor $\Downarrow$ is given by the way-below relation. Therefore, we will call the distributor $\Downarrow\colon X\modto X$ the {\em way-below $\V$-distributor} on $X$. In the case of metric spaces, as a consequence of symmetry, $\Downarrow\colon X\modto X$ is the same as the structure $X\colon X\modto X$.

As it is well-known, the way-below relation on a continuous dcpo is the smallest approximating auxiliary relation.  In what follows, we aim for a similar characterisation of the way-below \mbox{$\V$-distributor} in the general case.

\begin{definition}
Define a $\V$-distributor $v\colon X\modto X$ to be:
\begin{enumerate}
\item[---] {\em auxiliary}, if $v\leqslant X$.
\item[---] {\em interpolative}, if $v\leqslant v\cdot v$;
\item[---] {\em approximating}, if $v\in J$ and $X\blackleft v = X$.
\end{enumerate}
Furthermore, a $\V$-distributor $v\colon X\modto Y$ is:
\begin{enumerate}
\item[---] {\em $J$-cocontinuous}, if $\Sup^*\cdot v = \yoneda_*\cdot v$.
\end{enumerate}
\end{definition}
For example $\Idl$-continuous $\dwa$-distributors of type $X\modto 1$ are precisely the (characteristic maps) of Scott-open subsets of $X$. 

\subsection{Approximating $\V$-distributors}

Approximating $\V$-distributors naturally generalize approximating relations on posets, and enjoy analogous properties:
\begin{lemma}\label{lem:approxV}
Every approximating $\V$-distributor $v\colon X\modto X$ is auxiliary. If $v, w\colon X\modto X$ are approximating, then so is $w\cdot v$.
\end{lemma}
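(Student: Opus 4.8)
The plan is to derive both statements directly from the universal property of the right extension $\blackleft$, the operation dual to the lifting $\blackright$ defined in the preliminaries. Concretely, for $\V$-distributors $v,\chi\colon X\modto X$, the extension $\chi\blackleft v$ is the largest $\theta\colon X\modto X$ with $\theta\cdot v\leqslant\chi$, so that
\[
\theta\cdot v\leqslant\chi\quad\Longleftrightarrow\quad\theta\leqslant\chi\blackleft v,
\]
and in particular the counit $(\chi\blackleft v)\cdot v\leqslant\chi$ always holds. I would record this adjunction first, since everything else is a formal consequence of it.

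For the auxiliary claim, suppose $v$ is approximating, so that $X\blackleft v=X$. Instantiating the counit at $\chi=X$ yields $(X\blackleft v)\cdot v\leqslant X$, i.e.\ $X\cdot v\leqslant X$. Because $X$ is a unit for composition we have $X\cdot v=v$, whence $v\leqslant X$ and $v$ is auxiliary. This is the whole argument; the only thing to watch is that every composite here genuinely type-checks as a distributor $X\modto X$.

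For the closure claim I would verify the two defining conditions for $w\cdot v$ separately. That $w\cdot v\in J$, read as $w\cdot v\in\Mod{J}$, is immediate from the standing Convention that $\Mod{J}$ is a saturated class of weights, hence closed under composition, together with $v,w\in\Mod{J}$. The remaining condition $X\blackleft(w\cdot v)=X$ I would obtain from the general ``extension along a composite'' identity
\[
\chi\blackleft(w\cdot v)=(\chi\blackleft v)\blackleft w,
\]
which is purely formal: for every $\theta$ one has $\theta\leqslant(\chi\blackleft v)\blackleft w$ iff $\theta\cdot w\leqslant\chi\blackleft v$ iff $(\theta\cdot w)\cdot v\leqslant\chi$ iff, by associativity of composition, $\theta\cdot(w\cdot v)\leqslant\chi$ iff $\theta\leqslant\chi\blackleft(w\cdot v)$. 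Specialising to $\chi=X$ and feeding in the two hypotheses $X\blackleft v=X$ and $X\blackleft w=X$ gives
\[
X\blackleft(w\cdot v)=(X\blackleft v)\blackleft w=X\blackleft w=X,
\]
as required.

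Since all of this is formal once the adjunction for $\blackleft$ is in hand, there is no real obstacle to overcome. The two places demanding attention are the correct variance of the extension (so that the composites and the displayed identity are well typed) and the reading of the hypothesis ``$v\in J$'' as membership in the saturated class $\Mod{J}$, which is precisely what licenses the closure of $w\cdot v$ under composition.
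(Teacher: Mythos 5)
Your proof is correct and follows essentially the same route as the paper: the auxiliary claim via $v = X\cdot v = (X\blackleft v)\cdot v \leqslant X$, and the closure claim via saturation of $\Mod{J}$ together with the identity $X\blackleft(w\cdot v) = (X\blackleft v)\blackleft w = X\blackleft w = X$. The only difference is that you spell out the adjunction for $\blackleft$ and the composite-extension identity, which the paper uses without comment.
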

\begin{proof}
If $v$ is approximating, then $v=X\cdot v=(X\blackleft v)\cdot
v\leqslant X$. Let now  $v, w\colon X\modto X$ be approximating
$\V$-distributors. By hypothesis on $J$, $w\cdot v\in J$.
Furthermore, $X\blackleft (w\cdot v) = (X\blackleft v)\blackleft w = X$.
\end{proof}

\begin{lemma}\label{charapproxV}
A $\V$-distributor $v\colon X\modto X$ is approximating if and only
if its exponential mate $\mate{v}$ is of type $X\to J_SX$ and
$\Sup\mate{v} = 1_X$.
\end{lemma}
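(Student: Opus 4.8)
The plan is to translate both conditions defining ``$v$ approximating'' --- the membership $v\in J$ and the equation $X\blackleft v=X$ --- into pointwise statements about the family of $J$-ideals $\mate{v}(x)=v(-,x)$, and then to recognise the latter as exactly the defining equation (\ref{eq:ideal}) of suprema. Throughout I use that, since $v\colon X^{op}\otimes X\to Q$, its mate is the $\V$-functor $\mate{v}\colon X\to\widehat{X}$ with $\mate{v}(x)=v(-,x)$.

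First I would dispose of the membership condition. Using the unit law $X\cdot v=v$ one computes, for every $x\in X$, that $x^*\cdot v=v(-,x)=\mate{v}(x)$, because $x^*=X(-,x)$ and $(x^*\cdot v)(x')=\bigvee_{x''}v(x',x'')\otimes X(x'',x)=v(x',x)$. By the definition of $\Mod{J}$, this shows that $v\in J$ is equivalent to $\mate{v}(x)\in JX$ for all $x$, i.e.\ to $\mate{v}$ corestricting to $JX$. Thus the membership halves of the two sides already match, and the real content of the lemma is the equation.

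Second I would compute the lifting $X\blackleft v$ pointwise from its universal property $w\cdot v\leqslant X\iff w\leqslant X\blackleft v$, obtaining
\[
(X\blackleft v)(z,x)=\bigwedge_{x'}Q(v(x',z),X(x',x))=\widehat{X}(\mate{v}(z),\yoneda x).
\]
The right-hand side is precisely the expression occurring in (\ref{eq:ideal}) for the $J$-ideal $\phi=\mate{v}(z)$. Hence, for a fixed $z$, the assertion $(X\blackleft v)(z,-)=X(z,-)$ is literally the statement that $z$ is a supremum of $\mate{v}(z)$.

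With these two computations both implications fall out. If $v$ is approximating, then $X\blackleft v=X$ yields $\widehat{X}(\mate{v}(z),\yoneda x)=X(z,x)$ for all $x$, which by (\ref{eq:ideal}) witnesses $\mate{v}(z)\in J_SX$ with $\Sup\mate{v}(z)=z$; combined with $\mate{v}\colon X\to JX$ this gives $\mate{v}\colon X\to J_SX$ and $\Sup\mate{v}=1_X$. Conversely, if $\mate{v}\colon X\to J_SX$ and $\Sup\mate{v}=1_X$, then $v\in J$ and, substituting the supremum $z$ of $\mate{v}(z)$ into (\ref{eq:ideal}) and comparing with the displayed formula, we recover $X\blackleft v=X$. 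I expect the only delicate point to be bookkeeping rather than any genuine obstacle: one must be careful to \emph{read off} the existence of the supremum from the equation in the forward direction instead of assuming it, and to note that $\Sup\mate{v}=1_X$ is an equality of $\V$-functors only up to the usual identification of isomorphic objects (a supremum being determined by (\ref{eq:ideal}) only up to isomorphism, via full faithfulness of the Yoneda embedding and faithfulness, up to iso, of $f\mapsto f_*$).
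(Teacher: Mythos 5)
Your proof is correct and is essentially the paper's own argument: the paper likewise identifies $x^*\cdot v=\mate{v}(x)$ to match the two membership conditions, and reads the equation $X\blackleft v=X$ pointwise as $x_*=X\blackleft(x^*\cdot v)$, i.e.\ as exactly the supremum equation (\ref{eq:ideal}) for the ideal $\mate{v}(x)$, so that $\mate{v}(x)\in J_SX$ with $\Sup\mate{v}(x)=x$. Your closing caveat that $\Sup\mate{v}=1_X$ holds only up to isomorphism when $X$ is not separated is a fair piece of bookkeeping but changes nothing in the argument.
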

\begin{proof}
By definition, $v\colon X\modto X$ is approximating if and only if
$\mate{v}$ is of type $X\to JX$ and, for each $x\in X$,
$x_*=X\blackleft (x^*\cdot v)$. This in turn is equivalent to
$\mate{v}x\in J_SX$ and $(\Sup\cdot\mate{v})(x)=x$, for each $x\in
X$.
\end{proof}

\begin{lemma}\label{lem:approxScottV}
Any approximating $J$-cocontinuous $\V$-distributor is interpolative.
\end{lemma}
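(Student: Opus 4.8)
The plan is to derive interpolativity by feeding the defining identity of $J$-cocontinuity a weight manufactured from $v$ itself. Since $v$ is approximating, Lemma~\ref{charapproxV} tells us that its mate $\mate{v}$ takes values in $J_SX$ and satisfies $\Sup\mate{v} = 1_X$; concretely, for every $z\in X$ the ideal $\mate{v}(z) = v(-,z)$ lies in $J_SX$ and has supremum $\Sup(v(-,z)) = z$. This is exactly the input that makes it legitimate to evaluate the equality $\Sup^*\cdot v = \yoneda_*\cdot v$ of $\V$-distributors $X\modto J_SX$ at the argument $\phi = v(-,z)$.

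First I would unravel both composites pointwise at a pair $(x,v(-,z))$. On the left, $\Sup^*(y,v(-,z)) = X(y,\Sup(v(-,z))) = X(y,z)$, so the left-hand side becomes $\bigvee_{y} v(x,y)\otimes X(y,z)$, which by the unit law $X\cdot v = v$ for distributor composition collapses to $v(x,z)$. On the right, the Yoneda Lemma gives $\yoneda_*(y,v(-,z)) = v(y,z)$, so the right-hand side becomes $\bigvee_{y} v(x,y)\otimes v(y,z) = (v\cdot v)(x,z)$. Comparing the two evaluations yields $v(x,z) = (v\cdot v)(x,z)$ for all $x,z\in X$, that is, the distributor identity $v = v\cdot v$; in particular $v\leqslant v\cdot v$, so $v$ is interpolative.

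The only genuine subtlety, and the step I expect to be the crux, is the justification for evaluating at $\phi = v(-,z)$: this is precisely where the approximating hypothesis enters, guaranteeing both that $v(-,z)$ is an admissible argument (an element of $J_SX$) and that its supremum is the point $z$ we need. Everything afterwards is the bicategorical unit law $X\cdot v = v$ together with the Yoneda Lemma, and uses no further hypotheses. It is worth noting that the argument in fact delivers the stronger conclusion that $v$ is idempotent, $v = v\cdot v$, of which interpolativity is the half recorded in the statement.
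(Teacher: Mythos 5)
Your proof is correct and is essentially the paper's own argument in pointwise form: evaluating the identity $\Sup^*\cdot v=\yoneda_*\cdot v$ at the weights $\mate{v}(z)=v(-,z)$, justified by Lemma~\ref{charapproxV}, is exactly the elementwise rendering of the paper's one-line whiskering $v={\mate{v}}^*\cdot\Sup^*\cdot v={\mate{v}}^*\cdot\yoneda_*\cdot v=v\cdot v$, with the same two ingredients ($\Sup\mate{v}=1_X$ from the approximating hypothesis, and the Yoneda Lemma). Like the paper, you actually obtain the stronger idempotency $v=v\cdot v$, of which interpolativity is one half.
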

\begin{proof} From $\Sup^*\cdot v = \yoneda_*\cdot v$ we deduce
$v= {\mate{v}}^* \cdot \Sup^*\cdot v={\mate{v}}^* \cdot\yoneda_*\cdot v=v\cdot v$.
\end{proof}

\subsection{$J$-cocontinuous $\V$-distributors}
\label{sect:SCdistributors}

\begin{definition}
\label{def:distCont} A $\V$-functor $f\colon X\to Y$ is
\emph{$J$-cocontinuous} if $f(\Sup\varphi) = \Sup
\underline{f}(\varphi)$, for all $\varphi\in JX$, providing that
both suprema exist. Here $\underline{f}(\varphi)$ is defined to be
$\varphi\cdot f^*$.
\end{definition}

\noindent
As canonical examples we consider the following:
\begin{itemize}
\item[---] $\Idl$-cocontinuous $\dwa$-functors are precisely $J$-cocontinuous maps between posets.
\item[---] $\FSW$-cocontinuous $\dwa$-functors are precisely $J$-cocontinuous maps between posets.
\item[---] $\FC$-cocontinuous $[0,\infty]$-functors are precisely the maps that preserve limits of forward
Cauchy sequences.
\item[---] $\FSW$-cocontinuous $[0,\infty]$-functors are precisely the maps that preserve limits of forward
Cauchy nets.
\end{itemize}

\begin{proposition}\label{ScottDistFunV}
A $v\colon X\modto Y$ is $J$-cocontinuous iff $\mate{v}\colon Y\to\widehat{X}$ is
$J$-cocontinuous.
\end{proposition}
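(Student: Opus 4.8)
The plan is to reduce both sides of the equivalence to one and the same pointwise equality of $\V$-distributors, so that the proposition becomes a matter of unravelling the two definitions. Throughout recall that $\mate{v}\colon Y\to\widehat X$ is given by $\mate{v}(y)=v(-,y)$, and that in the condition $\Sup^*\cdot v=\yoneda_*\cdot v$ defining $J$-cocontinuity of the distributor $v\colon X\modto Y$ the symbols $\Sup$ and $\yoneda$ are taken with respect to the codomain $Y$; thus $\Sup^*,\yoneda_*\colon Y\modto J_SY$ and both composites are of type $X\modto J_SY$.

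First I would unfold the distributor condition. Using the identity law $Y\cdot v=v$ one gets $(\Sup^*\cdot v)(x,\psi)=v(x,\Sup_Y\psi)$, while the Yoneda lemma gives $\yoneda_*(y,\psi)=\psi y$ and hence $(\yoneda_*\cdot v)(x,\psi)=\bigvee_{y\in Y}v(x,y)\otimes\psi y=(\psi\cdot v)(x)$. Consequently $v$ is $J$-cocontinuous precisely when
\[
v(x,\Sup_Y\psi)=(\psi\cdot v)(x)\qquad\text{for all }x\in X,\ \psi\in J_SY.
\]

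Next I would unfold the functor condition for $\mate{v}$. Its left-hand side is $\mate{v}(\Sup_Y\psi)=v(-,\Sup_Y\psi)\in\widehat X$. For the right-hand side I would first record that ${\mate{v}}^*\in\Mod{J}$: by functoriality of $(-)^*$ we have $y^*\cdot{\mate{v}}^*=(\mate{v}(y))^*=(v(-,y))^*$, a representable and therefore a member of $J\widehat X$; since $\Mod{J}$ is saturated it follows that $\underline{\mate{v}}(\psi)=\psi\cdot{\mate{v}}^*$ lies in $J\widehat X$, so (as $\widehat X$ is cocomplete) the supremum $\Sup_{\widehat X}\underline{\mate{v}}(\psi)$ exists and the proviso ``both suprema exist'' is automatic on the $\widehat X$-side. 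Then, using the explicit formula for suprema in the presheaf $\V$-category $\widehat X$ together with the density identity ${\mate{v}}^*\cdot(\yoneda_X)_*=v$, I would compute
\[
\Sup_{\widehat X}\underline{\mate{v}}(\psi)=\bigl(\psi\cdot{\mate{v}}^*\bigr)\cdot(\yoneda_X)_*=\psi\cdot\bigl({\mate{v}}^*\cdot(\yoneda_X)_*\bigr)=\psi\cdot v.
\]
Thus the functor condition reads $v(-,\Sup_Y\psi)=\psi\cdot v$ in $\widehat X$, which evaluated at each $x\in X$ is exactly the displayed equality above; the two notions of $J$-cocontinuity therefore coincide and the proposition follows.

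The step I expect to be the main obstacle is the computation on the $\widehat X$-side, namely the identification $\Sup_{\widehat X}\underline{\mate{v}}(\psi)=\psi\cdot v$. This rests on the density identity ${\mate{v}}^*\cdot(\yoneda_X)_*=v$ --- equivalently on the fact that passing to the mate is inverse to $f\mapsto f^*\cdot(\yoneda_X)_*$, so that $v$ is recovered from $\mate{v}$ --- together with the explicit presheaf-supremum formula. The remaining ingredients (the identity law, the Yoneda lemma, and the membership $\underline{\mate{v}}(\psi)\in J\widehat X$ obtained from saturation of $\Mod{J}$) amount to routine distributor calculus.
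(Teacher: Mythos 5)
Your proposal is correct and follows essentially the same route as the paper's proof: both reduce the two conditions to the single equality $\mate{v}(\Sup_Y\psi)=\psi\cdot v$ for all $\psi\in J_SY$, obtained on the functor side from the computation $\Sup_{\widehat{X}}(\underline{\mate{v}}(\psi))=\psi\cdot{\mate{v}}^*\cdot(\yoneda_X)_*=\psi\cdot v$ and on the distributor side from the pointwise unfolding of $\Sup^*\cdot v=\yoneda_*\cdot v$. The only difference is that you spell out what the paper dismisses as \emph{routine}, namely the density identity ${\mate{v}}^*\cdot(\yoneda_X)_*=v$ and the membership $\underline{\mate{v}}(\psi)\in J\widehat{X}$ via saturation of $\Mod{J}$, which settles the existence proviso in Definition~\ref{def:distCont}.
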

\begin{proof}
It is routine to check that for any $\psi\in J_SY$, $\Sup_X(\underline{\mate{v}}(\psi)) = \psi\cdot v$. Hence the $\V$-functor $\mate{v}$ is $J$-cocontinuous iff $\mate{v}(\Sup_Y\psi) = \psi\cdot v$. We have, for $x\in X$: $\mate{v}(\Sup_Y\psi)(x) = v(x,\Sup_Y\psi) = (\Sup_Y^*\cdot v)(x,\psi) = ((\yoneda_Y)_* \cdot v)(x,\psi) = (\mate{\psi}{}^* \cdot (\yoneda_X)_*\cdot v)(x) = (\psi\cdot v)(x)$, as required.
\end{proof}

\begin{corollary}
If $v\colon Y\modto Z$ is $J$-cocontinuous, then $v\cdot w\colon X\modto Z$ is \mbox{$J$-cocontinuous,} for any $w\colon X\modto Y$.
\end{corollary}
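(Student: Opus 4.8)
The plan is to reduce the claim to associativity of $\V$-distributor composition. By definition, the composite $v\cdot w\colon X\modto Z$ is $J$-cocontinuous precisely when
\begin{equation*}
\Sup^*\cdot(v\cdot w)=\yoneda_*\cdot(v\cdot w)
\end{equation*}
as $\V$-distributors of type $X\modto J_SZ$, where $\Sup\colon J_SZ\to Z$ and $\yoneda\colon Z\to J_SZ$ supply the distributors $\Sup^*,\yoneda_*\colon Z\modto J_SZ$. The essential observation is that $J$-cocontinuity of a distributor with codomain $Z$ is an equation whose two sides are obtained by composing that distributor \emph{on the left} with the two fixed distributors $\Sup^*$ and $\yoneda_*$; a condition of this shape ought to be stable under composition on the right with an arbitrary $w$.

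Concretely, I would first invoke associativity of composition of $\V$-distributors to rewrite
\begin{equation*}
\Sup^*\cdot(v\cdot w)=(\Sup^*\cdot v)\cdot w,\qquad \yoneda_*\cdot(v\cdot w)=(\yoneda_*\cdot v)\cdot w .
\end{equation*}
Next I would apply the hypothesis that $v\colon Y\modto Z$ is $J$-cocontinuous, namely $\Sup^*\cdot v=\yoneda_*\cdot v$ (an equation of distributors $Y\modto J_SZ$), and substitute it into the right-hand sides. Since composing equal distributors on the right with $w$ yields equal distributors, we obtain $(\Sup^*\cdot v)\cdot w=(\yoneda_*\cdot v)\cdot w$, and reassembling via associativity gives exactly $\Sup^*\cdot(v\cdot w)=\yoneda_*\cdot(v\cdot w)$, which is the $J$-cocontinuity of $v\cdot w$.

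I do not expect a genuine obstacle here beyond bookkeeping: the argument is a direct manipulation of composites and uses nothing about the $J$-ideals themselves. The only point requiring care is to confirm that the types align — that $\Sup^*\cdot v$ and $\yoneda_*\cdot v$ are indeed distributors of type $Y\modto J_SZ$, so that composition on the right with $w\colon X\modto Y$ is legitimate — and to appeal to associativity of distributor composition, which is a standard property of the composition $\psi\cdot\phi=\bigvee_y\phi(-,y)\otimes\psi(y,-)$ introduced earlier.
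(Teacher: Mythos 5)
Your argument is correct and coincides with the paper's: the corollary is stated there without proof precisely because it follows immediately from the definition of $J$-cocontinuity ($\Sup^*\cdot v=\yoneda_*\cdot v$) together with associativity of distributor composition, which is exactly the manipulation you perform. Your type-checking is also right — $\Sup^*,\yoneda_*\colon Z\modto J_SZ$, so both sides of the hypothesis have type $Y\modto J_SZ$ and may legitimately be composed on the right with $w\colon X\modto Y$.
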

\begin{corollary}
A $\V$-distributor $v\colon X\modto Y$ is $J$-cocontinuous if and only if $v\cdot x_*\colon X\modto Y$ is \mbox{$J$-cocontinuous} for all $x\in X$.
\proof From $\Sup^*\cdot v\cdot x_*=\yoneda_*\cdot v\cdot x_*$ for all $x\in X$ we deduce $\Sup^*\cdot v=\yoneda_*\cdot v$.\qed
\end{corollary}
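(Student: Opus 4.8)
The plan is to unfold the definition of $J$-cocontinuity on both sides of the equivalence and reduce the single distributor identity characterising $v$ to the family of identities characterising the composites $v\cdot x_*$, the passage between the two being mediated only by associativity of composition together with one ``pointwise detection'' lemma. Concretely, $v\colon X\modto Y$ is $J$-cocontinuous precisely when $\Sup^*\cdot v=\yoneda_*\cdot v$ holds as $\V$-distributors $X\modto J_SY$, while for each $x\in X$ the composite $v\cdot x_*$ (with $x_*\colon 1\modto X$) is $J$-cocontinuous precisely when $\Sup^*\cdot(v\cdot x_*)=\yoneda_*\cdot(v\cdot x_*)$.

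For the forward implication I would invoke the preceding corollary directly: since $v$ is $J$-cocontinuous, so is $v\cdot w$ for every $\V$-distributor $w$ ending in $X$, and in particular for $w=x_*$. Equivalently, one precomposes the identity $\Sup^*\cdot v=\yoneda_*\cdot v$ with $x_*$ and reassociates to obtain $\Sup^*\cdot v\cdot x_*=\yoneda_*\cdot v\cdot x_*$. This direction carries no real content.

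The substance is the converse, and here the step I expect to be the main obstacle is isolating the detection lemma: for any $\V$-distributor $\alpha\colon X\modto W$ and any $x\in X$ one has $(\alpha\cdot x_*)(\ast,w)=\alpha(x,w)$ for all $w\in W$. Unwinding the composite, this is the identity $\bigvee_{x'\in X}X(x,x')\otimes\alpha(x',w)=\alpha(x,w)$, which I would settle by two inequalities: the summand $x'=x$ together with reflexivity $\unit\leqslant X(x,x)$ yields ``$\geqslant$'', and the $\V$-functoriality of $\alpha$ in its contravariant argument, giving $X(x,x')\otimes\alpha(x',w)\leqslant\alpha(x,w)$, yields ``$\leqslant$''. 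Granting the lemma, the hypothesis $\Sup^*\cdot v\cdot x_*=\yoneda_*\cdot v\cdot x_*$ for every $x$ says exactly that the two $\V$-distributors $\Sup^*\cdot v$ and $\yoneda_*\cdot v$ agree row by row, hence coincide; this is $J$-cocontinuity of $v$. Everything outside the detection lemma is a matter of reparenthesising composites, so once the lemma is in place the corollary is immediate.
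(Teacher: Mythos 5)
Your proposal is correct and takes essentially the same route as the paper: the paper's one-line proof deduces $\Sup^*\cdot v=\yoneda_*\cdot v$ from the equalities $\Sup^*\cdot v\cdot x_*=\yoneda_*\cdot v\cdot x_*$ for all $x\in X$, which is exactly your ``detection lemma'' that precomposition with $x_*$ evaluates a distributor at $x$ (itself just the unit law $\alpha\cdot X=\alpha$, since $(\alpha\cdot x_*)(\ast,w)=\bigvee_{x'}X(x,x')\otimes\alpha(x',w)=(\alpha\cdot X)(x,w)$). The forward direction is, as you say, immediate from the preceding corollary (or from reassociating after precomposition with $x_*$), so nothing is missing.
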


\subsection{The way-below $\V$-distributor}\label{way-below-dist}
Recall from the beginning of Section~\ref{section:JcocoVcat}, that we define the {\em way-below} $\V$-distributor $\ddn\colon X\modto X$ to be the largest $v$ such that $\Sup^*\cdot v\leqslant \yoneda_*$, that is, $\ddn := \Sup^* \blackright \yoneda_*$.
$$\xymatrix{J_SX & \ar@{->}|-{\object@{o}}[l]_{\yoneda_*}\ar@{.>}|-{\object@{o}}[ld]^\ddn X\\ X\ar@{->}|-{\object@{o}}[u]^{\Sup^*}}$$

As in the poset case, the way-below $\V$-distributor is not, in general, approximating; however, it is smaller than any approximating $\V$-distributor:
\begin{lemma}
\label{lemma:AppImpliesLeqWaybelow}
If $v\colon X\modto X$ is approximating, then $\ddn \ \leqslant v$. Hence, the way-below $\V$-distributor is auxiliary.
\proof Since $\ulcorner v\urcorner^*\cdot\yoneda_* \leqslant v$, we have $\yoneda_* \leqslant \ulcorner v\urcorner^*\blackright v$. Hence $\ddn \ = \Sup^* \blackright \yoneda_* \leqslant \Sup^* \blackright (\ulcorner v\urcorner^*\blackright v) = \ulcorner v\urcorner^*\cdot \Sup^* \blackright v = X \blackright v = v$.\qed
\end{lemma}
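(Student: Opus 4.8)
The plan is to bound $\ddn$ below by $v$ pointwise, using the explicit description $\ddn=\Sup^*\blackright\yoneda_*$. First I would spell out the lifting: for $x,x'\in X$,
\[
\ddn(x',x)=(\Sup^*\blackright\yoneda_*)(x',x)=\bigwedge_{\psi\in J_SX}Q\bigl(X(x,\Sup\psi),\,\psi x'\bigr),
\]
using $\Sup^*(x,\psi)=X(x,\Sup\psi)$ and $\yoneda_*(x',\psi)=\psi x'$ (the latter by the Yoneda Lemma). Since an infimum lies below each of its terms, it suffices to feed in a single well-chosen $J$-ideal $\psi$.

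The approximating hypothesis hands us the right one. By Lemma~\ref{charapproxV}, $\mate{v}$ has type $X\to J_SX$ and $\Sup\mate{v}=1_X$; in particular $\mate{v}x\in J_SX$ and $\Sup(\mate{v}x)=x$. Instantiating the infimum at $\psi=\mate{v}x$ gives
\[
\ddn(x',x)\ \leqslant\ Q\bigl(X(x,\Sup(\mate{v}x)),\,(\mate{v}x)\,x'\bigr)=Q\bigl(X(x,x),\,v(x',x)\bigr)=Q(\unit,v(x',x))=v(x',x),
\]
where I use $(\mate{v}x)\,x'=v(x',x)$, then $X(x,x)=\unit$ (since $\unit\leqslant X(x,x)$ and $\unit$ is greatest), and finally that $Q(\unit,-)$ is the identity. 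This gives $\ddn\leqslant v$. For the second assertion I would observe that the structure $X\colon X\modto X$ is itself approximating --- its mate is $\yoneda$, which corestricts to $J_SX$, with $\Sup\yoneda=1_X$, so Lemma~\ref{charapproxV} applies --- whence the case $v=X$ of what was just shown yields $\ddn\leqslant X$, i.e.\ $\ddn$ is auxiliary.

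The same inequality also drops out of the distributor calculus, matching the paper's idiom: from $\mate{v}{}^*\cdot\yoneda_*\leqslant v$ (a short transitivity/co-Yoneda computation) one passes to $\yoneda_*\leqslant\mate{v}{}^*\blackright v$, whence
\[
\ddn=\Sup^*\blackright\yoneda_*\ \leqslant\ \Sup^*\blackright(\mate{v}{}^*\blackright v)=(\mate{v}{}^*\cdot\Sup^*)\blackright v=X\blackright v=v,
\]
the middle step being the associativity law $\phi\blackright(\psi\blackright\chi)=(\psi\cdot\phi)\blackright\chi$ and the penultimate one using $\mate{v}{}^*\cdot\Sup^*=(\Sup\mate{v})^*=(1_X)^*=X$.

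I do not anticipate a serious obstacle: once the explicit form of $\ddn$ is written down, the first argument is a two-line instantiation. The points needing care are the two quantale identities $X(x,x)=\unit$ and $Q(\unit,-)=\mathrm{id}$ (both resting on $\unit$ being simultaneously the top element and the $\otimes$-unit) and, in the abstract variant, keeping domains and the order of composition straight in the associativity law and in the contravariant identity $(\Sup\mate{v})^*=\mate{v}{}^*\cdot\Sup^*$. All the genuine content is carried by Lemma~\ref{charapproxV}, which recasts ``approximating'' as $x=\Sup(\mate{v}x)$.
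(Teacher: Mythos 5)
Your proposal is correct and essentially coincides with the paper's proof: your second, distributor-calculus computation (from $\mate{v}{}^*\cdot\yoneda_*\leqslant v$ through the associativity law to $X\blackright v=v$) is verbatim the paper's argument, and your pointwise instantiation at $\psi=\mate{v}x$ is just that same lifting computation unfolded elementwise via Lemma~\ref{charapproxV}. Your treatment of the auxiliary claim --- observing that the structure $X$ is itself approximating, so $\ddn\leqslant X$ --- is exactly what the paper's ``Hence'' leaves implicit.
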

\begin{corollary}
If $\ddn$ is approximating, then $\ddn$ is interpolative.
\proof
If $\ddn$ is approximating, then so is $\ddn\cdot\ddn$, and therefore $\ddn\ \leqslant\ \ddn\cdot\ddn$.\qed
\end{corollary}

\begin{lemma}
\label{lemma:AuxSCImpliesLeqWaybelow}
Any auxiliary $J$-cocontinuous $v\colon X\modto X$ satisfies $v\leqslant \ \ddn$.
\proof $\Sup^*\cdot v \leqslant \yoneda_*\cdot v \leqslant \yoneda_*\cdot X = \yoneda_*$. Therefore $v\leqslant\Sup^*\blackright \yoneda_* = \ \ddn$.\qed
\end{lemma}
\begin{lemma}
\label{lemma:IntLeqWaybelowImpliesScott}
If $v\colon X\modto X$ is interpolative and $v\leqslant \ddn$, then $v$ is $J$-cocontinuous.
\proof $v\leqslant\Sup^*\blackright \yoneda_*$ if and only if $\Sup^*\cdot v \leqslant \yoneda_*$, which yields $\Sup^*\cdot v \leqslant \Sup^*\cdot v \cdot v\leqslant \yoneda_*\cdot v$.\qed
\end{lemma}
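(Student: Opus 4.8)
The plan is to prove the two inequalities making up $J$-cocontinuity, namely $\Sup^*\cdot v=\yoneda_*\cdot v$, separately, and to argue that one of them is automatic and costs nothing. The unit of the adjunction $\Sup\dashv\yoneda$ gives $\phi\leqslant\yoneda\Sup\phi$ for every $\phi\in J_SX$, and evaluating this pointwise yields $\phi x\leqslant X(x,\Sup\phi)$, that is, $\yoneda_*\leqslant\Sup^*$ as $\V$-distributors $X\modto J_SX$. Composing on the right with $v$ then gives $\yoneda_*\cdot v\leqslant\Sup^*\cdot v$ without using any hypothesis on $v$. Thus the entire content of the statement lies in the reverse inequality $\Sup^*\cdot v\leqslant\yoneda_*\cdot v$.

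For that inequality I would first convert the hypothesis $v\leqslant\ddn$ into a statement about composites. Since $\ddn=\Sup^*\blackright\yoneda_*$ and the lifting $\blackright$ is right adjoint to composition, the hypothesis $v\leqslant\Sup^*\blackright\yoneda_*$ is equivalent to $\Sup^*\cdot v\leqslant\yoneda_*$ --- exactly the adjunction already exploited when $\ddn$ was identified as the largest such $\V$-distributor. I would then feed in interpolativity: using $v\leqslant v\cdot v$ together with monotonicity and associativity of composition,
$$\Sup^*\cdot v\ \leqslant\ \Sup^*\cdot(v\cdot v)\ =\ (\Sup^*\cdot v)\cdot v\ \leqslant\ \yoneda_*\cdot v,$$
where the last step substitutes $\Sup^*\cdot v\leqslant\yoneda_*$ into the leftmost factor. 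Together with the free inequality this gives the desired equality, so $v$ is $J$-cocontinuous.

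The step I expect to require the most care is not a calculation but a bookkeeping check: one must be sure that exactly one of the two inequalities defining $J$-cocontinuity follows for free from $\Sup\dashv\yoneda$, so that the hypotheses of interpolativity and $v\leqslant\ddn$ are spent entirely on the direction $\Sup^*\cdot v\leqslant\yoneda_*\cdot v$. Once that is settled, the argument is a short chase through the lifting adjunction and the monotonicity of distributor composition, and I anticipate no genuine obstacle.
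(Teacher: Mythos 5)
Your proof is correct and follows essentially the same route as the paper: both convert $v\leqslant\ddn=\Sup^*\blackright\yoneda_*$ into $\Sup^*\cdot v\leqslant\yoneda_*$ via the lifting adjunction and then chain $\Sup^*\cdot v\leqslant\Sup^*\cdot v\cdot v\leqslant\yoneda_*\cdot v$ using interpolativity. The only difference is that you spell out the converse inequality $\yoneda_*\cdot v\leqslant\Sup^*\cdot v$ coming from the unit $\phi\leqslant\yoneda\Sup\phi$ of equation~(\ref{eq:ideal}), which the paper's one-line proof leaves implicit --- a harmless and indeed welcome piece of bookkeeping.
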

Also from \cite{Stu_Dynamics} we have:
\begin{lemma}
\label{lemma:SplitIsAdjoint} Let $\alpha\colon X\to J_SX$ be a
$J$-cocontinuous $\V$-functor with $\Sup\alpha \cong 1$. Then
$\alpha \dashv \Sup$. 
\end{lemma}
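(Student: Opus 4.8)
The plan is to use the order-enriched adjunction criterion in $\Cat{\V}$: since the $2$-cells between $\V$-functors are given by the underlying pointwise order, the adjunction $\alpha\dashv\Sup$ is equivalent to the two inequalities $1_X\leqslant\Sup\alpha$ (the unit) and $\alpha\Sup\leqslant 1_{J_SX}$ (the counit). The unit is exactly the hypothesis $\Sup\alpha\cong 1$. Thus the whole proof reduces to establishing the counit $\alpha\Sup\psi\leqslant\psi$ for every $\psi\in J_SX$.

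For the counit I would first unwind $\alpha\Sup\psi$ using $J$-cocontinuity of $\alpha$: taking $\varphi=\psi$ in Definition~\ref{def:distCont} gives $\alpha(\Sup\psi)=\Sup_{J_SX}(\psi\cdot\alpha^*)$, where $\Sup_{J_SX}$ is the supremum formed inside $J_SX$. Now $\Sup_{J_SX}\dashv\yoneda_{J_SX}$ is the $J$-cocompleteness adjunction for $J_SX$, so by~(\ref{eq:ideal}) the desired inequality $\Sup_{J_SX}(\psi\cdot\alpha^*)\leqslant\psi$ is equivalent to the single inequality $\psi\cdot\alpha^*\leqslant\yoneda_{J_SX}\psi$ in $\widehat{J_SX}$; that is, $(\psi\cdot\alpha^*)(\chi)\leqslant J_SX(\chi,\psi)$ for all $\chi\in J_SX$.

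The observation that makes this work is that the hypothesis $\Sup\alpha\cong 1$, together with the unit $\chi\leqslant\yoneda(\Sup\chi)$ of $\Sup\dashv\yoneda$ on $J_SX$, forces $\alpha x\leqslant\yoneda x$ for every $x\in X$: indeed $\alpha x\leqslant\yoneda(\Sup\alpha x)\cong\yoneda x$. With this in hand I would estimate, for fixed $\chi\in J_SX$,
$$(\psi\cdot\alpha^*)(\chi)=\bigvee_{x}J_SX(\chi,\alpha x)\otimes\psi x\leqslant\bigvee_{x}X(\Sup\chi,x)\otimes\psi x=\psi(\Sup\chi),$$
where the middle step uses $\alpha x\leqslant\yoneda x$ together with the identity $J_SX(\chi,\yoneda x)=X(\Sup\chi,x)$ coming from~(\ref{eq:ideal}), and the last equality is the routine presheaf computation $\bigvee_x X(a,x)\otimes\psi x=\psi a$ with $a=\Sup\chi$. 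Finally $\psi(\Sup\chi)=\widehat{X}(\yoneda\Sup\chi,\psi)\leqslant\widehat{X}(\chi,\psi)=J_SX(\chi,\psi)$ by the Yoneda Lemma and $\chi\leqslant\yoneda\Sup\chi$, which closes the argument.

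I expect the main obstacle to be bookkeeping rather than conceptual: one must keep the two suprema $\Sup_X$ and $\Sup_{J_SX}$ and the two Yoneda embeddings apart, and check that every supremum invoked actually exists --- guaranteed since $\psi\in J_SX$ and, by $J$-cocontinuity, $\Sup_{J_SX}(\psi\cdot\alpha^*)$ exists whenever $\Sup_X\psi$ does. The genuinely load-bearing step is the inequality $\alpha x\leqslant\yoneda x$; once it is available, the remainder is a monotone chase through the defining adjunction~(\ref{eq:ideal}) and the presheaf identity above.
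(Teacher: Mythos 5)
The paper offers no proof of this lemma at all: it is imported verbatim from \cite{Stu_Dynamics} (``Also from \cite{Stu_Dynamics} we have:''), so your argument must be judged on its own, and on its own it is correct in structure and in nearly every detail. The reduction of $\alpha\dashv\Sup$ to the two inequalities $1_X\leqslant\Sup\alpha$ and $\alpha\Sup\leqslant 1_{J_SX}$ is legitimate because $\Cat{\V}$ is order-enriched; the pivot $\alpha x\leqslant\yoneda x$ does follow from $\Sup\alpha\cong 1$ and the unit of $\Sup\dashv\yoneda$ on $J_SX$ (which is exactly \eqref{eq:ideal} read as an adjunction); the chain $(\psi\cdot\alpha^*)(\chi)\leqslant\psi(\Sup\chi)\leqslant J_SX(\chi,\psi)$ checks out; and translating $\Sup_{J_SX}(\psi\cdot\alpha^*)\leqslant\psi$ into the pointwise inequality $\psi\cdot\alpha^*\leqslant\yoneda_{J_SX}\psi$ via \eqref{eq:ideal} for $J_SX$ is sound.

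The one step you should not wave through is the existence of $\Sup_{J_SX}(\psi\cdot\alpha^*)$. Definition \ref{def:distCont} carries the proviso ``providing that both suprema exist'', so $J$-cocontinuity of $\alpha$ does not by itself manufacture the codomain-side supremum, contrary to your closing sentence. The hypotheses do supply it: let $v\colon X\modto X$ be the distributor with $\mate{v}=\alpha$. The computation in Proposition \ref{ScottDistFunV} identifies the supremum of $\psi\cdot\alpha^*$ formed in $\widehat{X}$ as $\psi\cdot v$; this lies in $JX$ because each $y^*\cdot v=\alpha(y)\in JX$ puts $v\in\Mod{J}$, which is closed under composition; and since $\Sup\alpha\cong 1$ makes $v$ approximating (Lemma \ref{charapproxV}), one has $\widehat{X}(\psi\cdot v,\yoneda z)=\widehat{X}(\psi,(X\blackleft v)(-,z))=\widehat{X}(\psi,\yoneda z)$ for all $z\in X$, so $\Sup_X(\psi\cdot v)=\Sup_X\psi$ exists and $\psi\cdot v\in J_SX$; finally, a supremum computed in $\widehat{X}$ that lands in the full subcategory $J_SX$ is the supremum there. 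With this patch your proof is complete; note also that once $\alpha(\Sup\psi)=\psi\cdot v$ is in hand, the counit follows in one line, since approximating implies auxiliary (Lemma \ref{lem:approxV}) and hence $\psi\cdot v\leqslant\psi\cdot X=\psi$, which would let you skip your three-step estimate entirely.
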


We gather the most important consequences of the above considerations here:
\begin{theorem}
\label{thm:cont}
Let $X$ be a $\V$-category and let $v\colon X\modto X\in J$. The following are equivalent:
\begin{enumerate}
\renewcommand{\theenumi}{\roman{enumi}}
\item $\mate{v}$ is of type $X\to J_SX$ and $\mate{v}\dashv\Sup$,
\item $v$ is approximating and $v=\ \ddn$,
\item $v$ is approximating and $J$-cocontinuous,
\item $v$ is approximating and $\mate{v}\colon X\to J_SX$ is $J$-cocontinuous,
\item for all $x\in X$ and $\varphi\in J_SX$ we have $\widehat{X}(\mate{v}(x),\varphi) = X(x,\Sup\varphi)$.
\end{enumerate}
\proof The implication (i)$\Rw$(ii) we have already discussed at the
beginning of this section. To see (ii)$\Rw$(iii), assume that $\ddn$
is approximating. Then $\ddn$ is interpolative and therefore
$J$-cocontinuous. Assume now (iii). Then $\mate{v}\colon
X\to\widehat{X}$ is $J$-cocontinuous. Therefore also $\mate{v}\colon
X\to J_SX$, which shows that (iii)$\Rw$(iv). Lemma
\ref{charapproxV} and Lemma \ref{lemma:SplitIsAdjoint} imply
immediately (iv)$\Rw$(i). Clearly, (i) implies (v). Finally, assume
(v). Then
\[
(X\blackleft v)(x,y) = \widehat{X}(\mate{v}(x),\mate{X}(y)) =
\widehat{X}(\mate{v}(x),\yoneda(y)) = X(x,\Sup \yoneda(y)) = X(x,y),
\]
which proves that $v$ is approximating. Hence, $\mate{v}$ is of type
$X\to J_SX$ and indeed left adjoint to $\Sup$. \qed
\end{theorem}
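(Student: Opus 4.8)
The plan is to prove the equivalences by running the cycle (i)$\Rw$(ii)$\Rw$(iii)$\Rw$(iv)$\Rw$(i) and then splicing in (v) via (i)$\Rw$(v) and (v)$\Rw$(i). Nearly every arrow is a direct invocation of one of the lemmas of this section, so most of the labour is organisational; the only two implications carrying genuine content are (i)$\Rw$(ii) and the recovery of approximation inside (v)$\Rw$(i).

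For (i)$\Rw$(ii) I would unfold the adjunction $\mate{v}\dashv\Sup$. Its counit says $\mate{v}(\Sup\psi)\leqslant\psi$ for every $\psi\in J_SX$, which by the calculation opening this section is equivalent to $\Sup^*\cdot v\leqslant\yoneda_*$; since $\ddn$ is by definition the largest distributor with this property, $v\leqslant\ddn$. Because $\ddn\leqslant X$ by Lemma~\ref{lemma:AppImpliesLeqWaybelow}, $v$ is auxiliary, so $\mate{v}\leqslant\yoneda$ pointwise and hence $\Sup\mate{v}\leqslant\Sup\yoneda=1$; together with the unit $1\leqslant\Sup\mate{v}$ this forces $\Sup\mate{v}=1$, so $v$ is approximating by Lemma~\ref{charapproxV}. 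A second application of Lemma~\ref{lemma:AppImpliesLeqWaybelow} then gives $\ddn\leqslant v$, whence $v=\ddn$.

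The interior of the cycle is bookkeeping. For (ii)$\Rw$(iii), the corollary to Lemma~\ref{lemma:AppImpliesLeqWaybelow} makes the approximating $\ddn$ interpolative, and Lemma~\ref{lemma:IntLeqWaybelowImpliesScott} (with $v=\ddn\leqslant\ddn$) upgrades this to $J$-cocontinuity. For (iii)$\Rw$(iv), Proposition~\ref{ScottDistFunV} transports $J$-cocontinuity of $v$ to $\mate{v}\colon X\to\widehat{X}$; since $v$ is approximating, Lemma~\ref{charapproxV} places $\mate{v}$ in $J_SX$, and because the class is saturated, suprema of $J$-ideals computed in $J_SX$ agree with those in $\widehat{X}$, so the corestriction stays $J$-cocontinuous. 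For (iv)$\Rw$(i), Lemma~\ref{charapproxV} yields $\mate{v}\colon X\to J_SX$ with $\Sup\mate{v}\cong 1$, and Lemma~\ref{lemma:SplitIsAdjoint} then delivers $\mate{v}\dashv\Sup$.

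It remains to weave in (v). The implication (i)$\Rw$(v) is immediate, since the hom-isomorphism of $\mate{v}\dashv\Sup$ is exactly $\widehat{X}(\mate{v}(x),\varphi)=X(x,\Sup\varphi)$ (using that $J_SX$ is full in $\widehat{X}$). For (v)$\Rw$(i) I would first recover approximation: instantiating (v) at $\varphi=\yoneda(y)\in J_SX$, where $\Sup\yoneda(y)=y$, and computing the residual gives $(X\blackleft v)(x,y)=\widehat{X}(\mate{v}(x),\yoneda(y))=X(x,\Sup\yoneda(y))=X(x,y)$, so $X\blackleft v=X$ and, as $v\in J$ by hypothesis, $v$ is approximating. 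Lemma~\ref{charapproxV} again puts $\mate{v}$ in $J_SX$, and (v) is verbatim the adjunction hom-isomorphism, so $\mate{v}\dashv\Sup$. The hard part, and the step I would write out most carefully, is (i)$\Rw$(ii): one must see that the bare existence of a left adjoint $\mate{v}$ forces $v$ simultaneously to be auxiliary, to equal $\ddn$, and to be approximating --- the squeeze $v\leqslant\ddn\leqslant X$ together with the triangle-identity input $\Sup\mate{v}=1$ is what makes this work, and it is the only place where the defining property of $\ddn$ as a largest distributor is genuinely used.
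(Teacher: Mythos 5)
Your proposal is correct and takes essentially the same route as the paper: the same cycle (i)$\Rw$(ii)$\Rw$(iii)$\Rw$(iv)$\Rw$(i) with (v) attached via (i)$\Rw$(v) and (v)$\Rw$(i), resting on the same ingredients (Lemmas \ref{charapproxV}, \ref{lemma:AppImpliesLeqWaybelow}, \ref{lemma:IntLeqWaybelowImpliesScott}, \ref{lemma:SplitIsAdjoint} and Proposition \ref{ScottDistFunV}). The only deviation is internal to (i)$\Rw$(ii) --- which the paper delegates to the computation opening Section \ref{section:JcocoVcat} --- where you obtain $\ddn\ \leqslant v$ from Lemma \ref{lemma:AppImpliesLeqWaybelow} after first establishing that $v$ is approximating, rather than from the paper's direct unit-of-adjunction calculation; both arguments are sound.
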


The following theorem provides a full characterization of $J$-continuity of $\V$-categories:

\begin{theorem}
The following are equivalent, for a $\V$-category $X$.
\begin{enumerate}
\renewcommand{\theenumi}{\roman{enumi}}
\item $X$ is $J$-continuous,
\item The way-below $\V$-distributor $\ddn \colon X\modto X$ is approximating,
\item There exists a $J$-cocontinuous approximating $\V$-distributor $v \colon X\modto X$.
\end{enumerate}
\end{theorem}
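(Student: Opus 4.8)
The plan is to reduce the entire statement to Theorem~\ref{thm:cont}, which already settles, for a \emph{fixed} distributor $v\colon X\modto X$ in $J$, the equivalence of being (the mate of) a left adjoint to $\Sup$, being approximating and equal to $\ddn$, and being approximating and $J$-cocontinuous. The present theorem is essentially the existentially quantified form of those conditions, together with the observation that the quantified $v$ is forced to coincide with $\ddn$. So there is no new analytic content to produce; the work is to pass correctly from the fixed-$v$ equivalences to their existential counterparts.

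First I would establish (i)$\Leftrightarrow$(ii). By definition $X$ is $J$-continuous exactly when $\Sup\colon J_SX\to X$ admits a left adjoint. By the correspondence recalled at the start of Section~\ref{section:JcocoVcat}, every $\V$-functor $X\to J_SX$ is the exponential mate $\mate{v}$ of a unique $v\colon X\modto X$ in $J$, and being left adjoint to $\Sup$ is precisely condition~(i) of Theorem~\ref{thm:cont} for that $v$. Hence $X$ is $J$-continuous iff some $v\in J$ satisfies Theorem~\ref{thm:cont}(i); by the equivalence (i)$\Leftrightarrow$(ii) there, this holds iff some $v\in J$ is approximating and equal to $\ddn$, which in turn holds iff $\ddn$ itself is approximating (take $v=\ddn$, noting $\ddn\in J$ once it is approximating). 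The point worth stressing is that the identification $v=\ddn$ is not assumed but produced: the implication Theorem~\ref{thm:cont}(i)$\Rw$(ii) guarantees that \emph{any} left adjoint to $\Sup$ is automatically $\mate{\ddn}$, and it is this that upgrades ``$\Sup$ has a left adjoint'' to ``$\ddn$ is approximating''.

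Next I would prove (ii)$\Leftrightarrow$(iii), again by specialising Theorem~\ref{thm:cont}. For (ii)$\Rw$(iii): if $\ddn$ is approximating then $v=\ddn$ satisfies Theorem~\ref{thm:cont}(ii), hence also (iii), so $\ddn$ is $J$-cocontinuous and approximating, and $v=\ddn$ witnesses (iii). For (iii)$\Rw$(ii): given a $J$-cocontinuous approximating $v$ (with $v\in J$ since it is approximating), $v$ satisfies Theorem~\ref{thm:cont}(iii), hence (ii), which forces $v=\ddn$; thus $\ddn=v$ is approximating, establishing (ii). Combining the two biconditionals yields the equivalence of (i), (ii) and (iii).

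I do not anticipate a genuine obstacle, since the substantive arguments---the interpolation step and the adjunction criterion of Lemma~\ref{lemma:SplitIsAdjoint}---are already absorbed into Theorem~\ref{thm:cont}. The only step requiring real care is the bookkeeping inside (i)$\Leftrightarrow$(ii): one must explicitly invoke the maximality characterisation $\ddn=\Sup^{*}\blackright\yoneda_*$ together with Theorem~\ref{thm:cont}(i)$\Rw$(ii) to ensure that an arbitrary left adjoint coincides with $\mate{\ddn}$, rather than merely exhibiting \emph{some} approximating distributor. Everything else is a direct transcription of the fixed-$v$ equivalences into their existential versions.
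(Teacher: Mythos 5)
Your proof is correct and matches the paper's intended argument: the paper states this theorem without proof precisely because it is the existentially quantified corollary of Theorem~\ref{thm:cont}, and your reduction --- using the mate correspondence between $\V$-functors $X\to J_SX$ and distributors $X\modto X$ in $J$, and the forced identification $v=\ddn$ from Theorem~\ref{thm:cont}(i)$\Rw$(ii) --- is exactly the bookkeeping the authors leave implicit. No gaps.
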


\noindent
Examples:
\begin{enumerate}
\item[---] $\FSW$-continuous $\dwa$-categories are precisely continuous domains;
\item[---] cocontinuous $\dwa$-categories are completely distributive complete lattices; the way-below distributor becomes the `totally-below' relation associated with complete distributivity of the underlying lattice;
\item[---] $[0,\infty]$ considered with the generalized metric structure $[0,\infty](x,y) = \max\{y-x,0\}$ is an $\FSW$-continuous $[0,\infty]$-category;
\item[---] complete metric spaces are $\FSW$-continuous $[0,\infty]$-categories.
\end{enumerate}

\section{$J$-continuous $(\Ultra,\V)$-categories}

Besides metric spaces, also other geometric objects such as topological and approach spaces can be viewed as generalized ordered sets. The topological case is very elegantly expressed in \cite{Bar_RelAlg} where topological spaces are presented as sets $X$ equipped with a relation $\mathfrak{x}\to x$ between ultrafilters and points, subject to the \emph{reflexivity} and the \emph{transitivity} condition
\begin{align}\label{TopAx}
\dot{x}\to x, &&& (\Upsilon\to\sigma\;\&\;\sigma\to x)\,\Rightarrow\, m_X(\Upsilon)\to x,
\end{align}
for all $x\in X$, $\sigma\in UX$ and $\Upsilon\in UUX$. Here $e_X(x)=\dot{x}$ is the principal ultrafilter induced by $x$ and
\begin{align*}
m_X(\Upsilon)=\{A\subseteq X\mid A^\#\in\Upsilon\} && (A^\# = \{\sigma\in UX\mid A\in\sigma\})
\end{align*}
is the filtered sum of the filters in $\Upsilon$. Furthermore, approach spaces \cite{Low_ApBook} are to topological spaces what metric spaces are to ordered sets: one trades the quantale $\dwa$ for $[0,\infty]$. Hence, an approach space can be presented as a pair $(X,a)$ consisting of a set $X$ and a $[0,\infty]$-relation $a:UX\relto X$ satisfying
\begin{align}\label{AppAx}
0\ge a(\dot{x},x) &&\text{and}&& Ua(\Upsilon,\sigma)+a(\sigma,x)\ge a(m_X(\Upsilon),x),
\end{align}
and a mapping $f:X\to Y$ between approach spaces $X=(X,a)$ and $Y=(Y,b)$ is a \emph{contraction} whenever $a(\sigma,x)\ge b(Uf(\sigma),f(x))$ for all $\sigma\in UX$ and $x\in X$. In the sequel $\AP$ denotes the category of approach spaces and contraction maps. It is now a little step to admit that the domain $\mathfrak{x}$ of $\mathfrak{x}\to x$ in $X$ is an element of a set $TX$ other then the set $UX$ of all ultrafilters of $X$. Eventually, we reach at the notion of a  $(\mT,\V)$-category, for a $\mathsf{Set}$-monad $\mT=(T,e,m)$ and quantale $\V$, as introduced in \cite{CH_TopFeat,CT_MultiCat,CHT_OneSetting}. However, to keep our presentation simple, in this paper we decided to limit our choice of monad to $\Ultra$ (the identity monad case already implicitly discussed in preceeding sections) but we hasten to remark that the majority of the results that follow can be restated and proved in the general setting.

\subsection{The ultrafilter monad}

The ultrafilter monad $\Ultra = (U,e,m)$ consists of:
\begin{itemize}
\item[---] a functor $U\colon \SET\to\SET$ that to each set $X$ assigns the set of all ultrafilters on $X$, and to each map $f\colon X\to Y$ assigns a map $Uf\colon UX\to UY$ given by $Uf(\sigma) := \{B\pzb Y\tze f^{-1}[B]\in \sigma\}$;
\item[---] the unit $e$, which is a natural transformation from the identity functor on $\SET$ to $U$ given componentwise by: $e_X\colon X\to U X$, $e_X(x) := \dot{x} = \{A\pzb X\tze x\in A\}$;
\item[---] the multiplication $m$, which is a natural transformation of type $UU\to U$. Its component $m_X\colon UU X\to UX$ assigns to each ultrafilter of ultrafilters $\Upsilon$ these subsets $A$ of $X$ for which $A^\# = \{\sigma\in UX\mid A\in\sigma\}$ belongs to $\Upsilon$. 
\end{itemize}

\subsection{The Lawson topology on $\V$}

Note that the transitivity axiom in both \eqref{TopAx} and \eqref{AppAx} above involves the application of $U$ to a relation $r:X\relto Y$: for a $\dwa$-relation one puts
\[
\sigma\,(Ur)\,\nu \;\;\; \text{ if }\;\;\; \forall A\in\sigma,B\in\nu\,\exists x\in A,y\in B\,.\,x\,r\, y,
\]
and for a $[0,\infty]$-relation
\[
Ur(\sigma,\nu)=\sup_{A\in\sigma,B\in\nu}\inf_{x\in A,y\in B}r(x,y),
\]
where $\sigma\in UX$ and $\nu\in UY$. These examples suggest that, for a general quantale $\V$, one defines
\[
Ur(\sigma,\nu)=\bigwedge_{A\in\sigma,B\in\nu}\bigvee_{x\in A,y\in B}r(x,y).
\]
in order to obtain an extension of $U$ to $\V$-relations. It is interesting to observe that this formula can be rewriten as 
\begin{align*}
(\sigma,\nu) &\mapsto\bigvee \{\xi\cdot Ur(\omega)\;\Bigl\lvert\;\omega\in U(X\times Y), U\pi_1(\omega)=\sigma,U\pi_2(\omega)=\nu \}
\end{align*}
where $\xi:U\V\to\V, \sigma\mapsto \bigwedge_{A\in \sigma}\bigvee A$ is the (convergence of the) Lawson topology on $\V$ \cite[Thm.III-3.17]{Book_ContLat}. Being the convergence of a compact Hausdorff topology on $\V$, the diagrams
\begin{align*}
\xymatrix{Q\ar[r]^{e_Q}\ar[dr]_{1_Q} & UQ\ar[d]^\xi\\ & Q} &&& \hspace{3em}
\xymatrix{UUQ\ar[d]_{m_Q}\ar[r]^{U\xi} & UQ\ar[d]^\xi\\ UQ\ar[r]_\xi & Q}\\
\end{align*}
commute. Furthermore, in order to guarantee functoriality and other good properties of the above extension of $U$ to $\Mat{\V}$ we assume that
\begin{itemize}
\item[---] $(\V,\unit,\otimes)$ is a monoid in the category of compact Hausdorff spaces, i.e.\ the diagrams
\begin{align*}
\xymatrix{U1\ar[d]_{!}\ar[r]^{U\unit} & UQ\ar[d]^\xi\\ 1\ar[r]_{\unit} & Q} &&&
\xymatrix{U(Q\times Q)\ar[rr]^-{U(\otimes)}\ar[d]_{\langle\xi\cdot U\pi_1,\xi\cdot U\pi_2\rangle} && UQ\ar[d]^\xi\\ Q\times Q\ar[rr]_-{\otimes} && Q}
\end{align*}
commute, and 
\item[---] we also require the following technical property: whenever for $f\colon X\to Y$, $\varphi\colon X\to Q$, $\psi\colon Y\to Q$ we have $\psi(y) \leqslant \bigvee_{\{x\tze fx=y\}} \varphi(x)$, then $\xi(U(\psi)(\sigma))\leqslant \bigvee_{\{\nu\tze U(f)(\nu)=\sigma\}} \xi(U(\varphi)(\nu))$ holds.
\end{itemize}

In conclusion, the triple $(\Ultra, \V, \xi)$ is a strict topological theory in the sense of \cite{Hof_TopTh}. In the following subsections we summarise the main aspects of the theory of $\Tth$-categories, referring to \cite{CT_MultiCat,Hof_TopTh,CH_Compl,Hof_Cocompl,CH_CocomplII} for further details. We remark that many notions and results do not differ dramatically from the $\V$-case, with the notable exception of the dual category and, consequently, the Yoneda lemma (see Proposition \ref{DistasFunTV}, Lemma \ref{YonedaTV} and Corollary \ref{YonedaLemmaTV} below). Our main contribution here is the introduction and study of continuity (see Subsection \ref{ContTV}), which has to face yet another problem: the lifting of distributors is not always available in the $\Tth$-case. Therefore we cannot use freely the way-below distributor $\ddn$, however, we prove that local versions of $\ddn$ do exist and can often be used instead.

\begin{remark}
By the Fundamental Theorem of Compact Semilattices (VI-3.4 of \cite{Book_ContLat}), the only compact Hausdorff topology on $\V$ making $\wedge$ continuous is the Lawson topology. Therefore, if the tensor on $\V$ is given by infimum, the Lawson topology is the only compact Hausdorff topology on $\V$ turning $(\Ultra, \V, \xi)$ into a strict topological theory.
\end{remark}

\subsection{$\Tth$-relations}
A $\V$-relation of the form $\alpha\colon UX\relto Y$ we call
$\Tth$-relation from $X$ to $Y$, and write $\alpha\colon X\krelto
Y$. For $\Tth$-relations $\alpha\colon X\krelto Y$ and $\beta\colon
Y\krelto Z$ we define the Kleisli convolution
$\beta\kleisli\alpha\colon X\krelto Z$ as
$\beta\kleisli\alpha=\beta\cdot  U \alpha\cdot m_X^{op}$. Kleisli
convolution is associative and has the $\Tth$-relation
$e_X^{op}\colon X\krelto X$ as a lax identity: $a\kleisli
e_X^{op}=a$ and $e_Y^{op}\kleisli a\geqslant a$ for any $a\colon
X\krelto Y$. We call $a\colon X\krelto Y$ unitary if
$e_Y^{op}\kleisli a= a$. Furthermore, for a $\Tth$-relation
$\alpha\colon X\krelto Y$, the composition function
$(-)\kleisli\alpha$ still has a right adjoint $(-)\whiteleft\alpha$
(we define $\gamma\whiteleft\alpha\colon =\gamma\blackleft(
U (\alpha)\cdot m_X^{op})$) but $\alpha\kleisli (-)$ in general
does not.

\subsection{$(\Ultra,\V)$-categories}
An $(\Ultra,\V)$-category is a pair consisting of a set $X$ and an
$\Tth$-endorelation $X(-,-)\colon X\krelto X$ such that
$e_X^{op}\leqslant X$ and $X\kleisli X\leqslant X$. Expressed
elementwise, these conditions become 
\begin{align*}
 \unit\leqslant X(e_X(x),x) &&\text{and}&&
UX(\Upsilon,\upsilon)\otimes X(\upsilon,x)\leqslant X(m_X(\Upsilon),x)
\end{align*}
for all $\Upsilon\in UUX$, $\upsilon\in UX$ and
$x\in X$. A function $f\colon X\to Y$ between
$(\Ultra,\V)$-categories is a $(\Ultra,\V)$-functor if $f\cdot
X\leqslant Y\cdot Tf$, which in pointwise notation reads as
$X(\upsilon,x)\leqslant Y(Uf(\upsilon),f(x))$ for all $\upsilon\in
UX$, $x\in X$. If we have above even equality, we call $f\colon X\to
Y$ fully faithful. The resulting category of
$(\Ultra,\V)$-categories and $\Tth$-functors we denote as
$\Cat{\Tth}$. The quantale $\V$ becomes a $(\Ultra,\V)$-category
$\V=(\V,\hom_\xi)$, where $\hom_\xi\colon U
\V\times\V\to\V,\;(\sigma,v)\mapsto\hom(\xi(\sigma),v)$. By $|X|$ we
denote the $(\Ultra,\V)$-category $(UX,m_X)$. There is also a free
$(\Ultra,\V)$-category on a set $X$ given by $(X,e_X^{op})$. We have
a canonical forgetful functor
$\ForgetToV\colon\Cat{\Tth}\to\Cat{\V}$ sending a
$(\Ultra,\V)$-category $X$ to its underlying $\V$-category
$\ForgetToV\!X=(X,X\cdot e_X)$. Furthermore, $S$ has a left adjoint
$\ForgetToVAd\colon\Cat{\V}\to\Cat{\Tth}$ defined by
$\ForgetToVAd\!X=(X,e_X^{op}\cdot UX)$, for each $\V$-category
$X$.
\begin{example}\label{ExTopSp}
For $\V=\dwa$, a $\Tth$-category is a topological space presented
via its ultrafilter convergence structure, and a function $f:X\to Y$
between topological spaces is continuous if and only if it is
$\Tth$-functor (see \cite{Bar_RelAlg}). The functor
$\ForgetToV\colon\TOP\to\ORD$ sends a topological space $X$ to the
ordered set $X$ where $x\leqslant y:\iff \dot{x}\to y\iff
y\in\overline{\{x\}}$, and its left adjoint $\ForgetToVAd\colon\ORD\to\TOP$ takes an ordered set to its Alexandroff space. Note that we consider $X$ here with the dual
of the specialization order. The quantale $\dwa$ becomes the Sierpi\'nski space with $\{1\}$ closed, and $|X|$ is the \v{C}ech-Stone compactification of the discrete space $X$.
\end{example}

There is yet another functor connecting $(\Ultra,\V)$-categories
with $\V$-categories, namely $\MFunctor\colon\Cat{\Tth}\to\Cat{\V}$
which sends a $(\Ultra,\V)$-category $X$ to the $\V$-category
$(UX,U  X\cdot m_X^{op})$. These functors are all needed to
define the dual of a $(\Ultra,\V)$-category $X$, namely $X^\op:
=\ForgetToVAd((\MFunctor\! X)^\op)$.

As studied in \cite{Hof_TopTh} the tensor product of $\V$ can be transported to $\Cat{\Tth}$ by putting $X\otimes Y := X\times Y$ with structure $(X\otimes Y)(\sigma,(x,y))=X(\upsilon,x)\otimes Y(\nu,y)$, where $\sigma\in U(X\times Y)$, $x\in X$, $y\in Y$, $\upsilon=U\pi_1(\sigma)$ and $\nu=U\pi_2(\sigma)$. The $(\Ultra,\V)$-category $E=(1,\unit)$ is a $\otimes$-neutral object, where $1$ is a singleton set and $\unit\colon U1\times 1\to\V$ the constant relation with value $\unit\in\V$. In general, this constructions does not result in a closed structure on $\Cat{\Tth}$; however we have that: $|X|\otimes(-)\colon \Cat{\Tth}\to\Cat{\Tth}$ has a right adjoint $(-)^{|X|}\colon \Cat{\Tth}\to\Cat{\Tth}$.

\subsection{$\Tth$-distributors}
Let $X$ and $Y$ be $\Tth$-categories and $\varphi\colon X\krelto Y$
be a $\Tth$-relation. We call $\varphi$ a $\Tth$-distributor, and
write $\varphi\colon X\kmodto Y$, if $\varphi\kleisli X=\varphi$ and
$Y\kleisli \varphi=\varphi$. Kleisli convolution is associative, and
it follows that $\psi\kleisli\varphi$ is a $\Tth$-distributor if
$\psi\colon Y\kmodto Z$ and $\varphi\colon X\kmodto Y$ are so.
Furthermore, we have $X(-,-)\colon X\kmodto X$ for each
$\Tth$-category $X$, and, by definition, $X(-,-)$ is the identity
$\Tth$-distributor on $X$ for the Kleisli convolution. In other
words, $\Tth$-categories and $\Tth$-distributors form a category,
denoted as $\Mod{\Tth}$, with Kleisli convolution as compositional
structure. In fact, $\Mod{\Tth}$ is an ordered category with the
structure on $\hom$-sets inherited from $\Mat{\Tth}$. Finally, a
$\Tth$-relation $\varphi\colon X\krelto Y$ is unitary precisely if
$\varphi$ is a $\Tth$-distributor $\varphi\colon
(X,e_X^{op})\kmodto(Y,e_Y^{op})$ between the corresponding discrete
$\Tth$-categories.

A $\Tth$-functor $f\colon X\to Y$ induces $\Tth$-distributors $f_*\colon X\krelto Y$ and $f^*\colon
Y\krelto X$ by putting $f_*=Y\cdot Uf$ and $f^*=f^{op}\cdot Y$
respectively. Hence, for $\sigma\in UX$, $\nu\in UY$, $x\in X$ and
$y\in Y$, we have $f_*(\sigma,y)=b(Uf(\sigma),y)$ and
$f^*(\nu,x)=b(\nu,f(x))$. These $\Tth$-distributors form an adjunction $f_*\dashv f^*$ in $\Mod{\Tth}$. Moreover, given a $(\Ultra,\V)$-functor $g\colon Y\to Z$, $g_*\kleisli f_* =(g\cdot f)_*$
and $f^*\kleisli g^*= (g\cdot f)^*$, plus $(1_X)_*=(1_X)^*=X$.

We will often need the following crucial property.
\begin{proposition}\label{DistasFunTV}
For an $\Tth$-relation $\psi\colon X\krelto Y$, the following are equivalent:
\begin{enumerate}
\item[\rm (i)] $\psi\colon X\kmodto Y$ is a $\Tth$-distributor;
\item[\rm (ii)] Both $\psi\colon |X|\otimes Y\to\V$ and $\psi\colon X^\op\otimes Y\to\V$ are $(\Ultra,\V)$-functors.
\end{enumerate}
\end{proposition}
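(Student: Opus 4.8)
The plan is to show that each of the two functoriality conditions in (ii) captures exactly one of the two defining inequalities of a $\Tth$-distributor, so that their conjunction is equivalent to (i). First I would record the reduction to inequalities: since $e_X^{op}\leqslant X$ and $e_Y^{op}\leqslant Y$, and Kleisli convolution is monotone with $\psi\kleisli e_X^{op}=\psi$ and $e_Y^{op}\kleisli\psi\geqslant\psi$, one always has $\psi\kleisli X\geqslant\psi$ and $Y\kleisli\psi\geqslant\psi$. Hence $\psi$ is a $\Tth$-distributor precisely when $Y\kleisli\psi\leqslant\psi$ and $\psi\kleisli X\leqslant\psi$. I would also unwind the target once and for all: since $\V=(\V,\hom_\xi)$ with $\hom_\xi(\sigma,v)=\hom(\xi(\sigma),v)$, a map $f$ from an $(\Ultra,\V)$-category $A$ into $\V$ is an $(\Ultra,\V)$-functor exactly when $\xi(Uf(\mathfrak{a}))\otimes A(\mathfrak{a},a)\leqslant f(a)$ for all $\mathfrak{a}\in UA$ and $a\in A$, by the defining adjunction of $\hom$.

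The clean half is the equivalence between $\psi\colon|X|\otimes Y\to\V$ being a functor and $Y\kleisli\psi\leqslant\psi$. Unwinding the tensor, $(|X|\otimes Y)(\omega,(\sigma,y))=m_X(U\pi_1\omega,\sigma)\otimes Y(U\pi_2\omega,y)$, which equals $Y(U\pi_2\omega,y)$ when $m_X(U\pi_1\omega)=\sigma$ and is $\bot$ otherwise; so the functor condition reads $\xi(U\psi(\omega))\otimes Y(U\pi_2\omega,y)\leqslant\psi(\sigma,y)$ for every $\omega\in U(UX\times Y)$ with $m_X(U\pi_1\omega)=\sigma$. On the other side, spelling out $Y\kleisli\psi=Y\cdot U\psi\cdot m_X^{op}$ gives $\bigvee_{\Upsilon:\,m_X\Upsilon=\sigma}\bigvee_{\nu}U\psi(\Upsilon,\nu)\otimes Y(\nu,y)$. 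Here I would invoke the rewriting of the $U$-extension recorded in the Lawson subsection, $U\psi(\Upsilon,\nu)=\bigvee\{\xi(U\psi(\omega)):U\pi_1\omega=\Upsilon,\ U\pi_2\omega=\nu\}$, to reindex the double supremum as a single supremum over those $\omega$ with $m_X(U\pi_1\omega)=\sigma$. Since $\otimes$ is commutative and a supremum lies below $\psi(\sigma,y)$ iff each summand does, the two conditions coincide term by term.

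The main obstacle is matching $\psi\colon X^\op\otimes Y\to\V$ with the remaining inequality $\psi\kleisli X\leqslant\psi$, because the dual $X^\op=\ForgetToVAd((\MFunctor X)^\op)$ is assembled from three operations and its structure must be unwound explicitly. I would compute $X^\op(\Upsilon,\sigma)$ by first observing that $\ForgetToVAd$ forces evaluation at $e_{UX}(\sigma)=\dot\sigma$, then expressing $(\MFunctor X)^\op$ through $U X\cdot m_X^{op}$, and finally reintroducing $\xi$ via the same extension-rewriting. Comparing the outcome with the elementwise form of $\psi\kleisli X=\psi\cdot U X\cdot m_X^{op}$, the two families of inequalities are forced to agree; the non-formal ingredient is the technical compatibility assumed for $\xi$, namely that $\psi(y)\leqslant\bigvee_{fx=y}\varphi(x)$ implies $\xi(U\psi(\sigma))\leqslant\bigvee_{Uf(\nu)=\sigma}\xi(U\varphi(\nu))$, which is exactly what allows passage between $\xi$ of a pushed-forward ultrafilter and suprema over the fibres of the relevant projections. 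Granting the two equivalences, (i) holds iff both $Y\kleisli\psi\leqslant\psi$ and $\psi\kleisli X\leqslant\psi$ hold, iff both maps in (ii) are $(\Ultra,\V)$-functors, as claimed.
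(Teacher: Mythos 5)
Your first equivalence is sound: unwinding the $(\Ultra,\V)$-functor condition for $\psi\colon |X|\otimes Y\to\V$ and reindexing the double supremum in $Y\kleisli\psi=Y\cdot U\psi\cdot m_X^{\op}$ via the rewriting $U\psi(\Upsilon,\nu)=\bigvee\{\xi(U\psi(\omega))\mid U\pi_1\omega=\Upsilon,\,U\pi_2\omega=\nu\}$ does give exactly $Y\kleisli\psi\leqslant\psi$, and your reduction of (i) to the two inequalities $Y\kleisli\psi\leqslant\psi$ and $\psi\kleisli X\leqslant\psi$ is correct. (For calibration: the paper itself states this proposition without proof, importing it from the literature on strict topological theories, so the benchmark is the argument given there.)

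The gap is your second biconditional: it is false that functoriality of $\psi\colon X^\op\otimes Y\to\V$ is equivalent to $\psi\kleisli X\leqslant\psi$ alone. Counterexample: take $\V=\dwa$ and $X=1=(1,e_1^{\op})$, so $UX$ is a singleton and $\psi$ is the characteristic map $\chi_S$ of a subset $S\subseteq Y$. Since $U1=1$, one computes $\psi\kleisli X=\psi$ for \emph{every} $\psi$, so your right-hand condition is vacuous. On the other hand $\MFunctor(1)^\op$ is the one-point $\V$-category, hence $X^\op\otimes Y\cong Y$, and $\V=\dwa$ is the Sierpi\'nski space; so $\psi\colon X^\op\otimes Y\to\V$ is a $\Tth$-functor iff ($S\in\nu$ and $\nu\to y$ imply $y\in S$), i.e.\ iff $S$ is closed, i.e.\ iff $Y\kleisli\psi\leqslant\psi$ --- which fails for non-closed $S$. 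The structural reason your pairing cannot work is that functoriality on $X^\op\otimes Y$ quantifies over all $\omega\in U(UX\times Y)$, and for non-principal $\omega$ the factor $Y(U\pi_2\omega,y)$ injects convergence in the $Y$-variable, i.e.\ information of $Y\kleisli\psi$ type; only the restriction to principal $\omega=e_{UX\times Y}(\sigma',y)$ yields $\psi\kleisli X\leqslant\psi$, using $X^\op(e_{UX}(\sigma'),\sigma)=(\MFunctor X)^\op(\sigma',\sigma)$. Consequently your direction (ii)$\Rw$(i) can be salvaged (extract each inequality by specializing to principal ultrafilters, resp.\ your first computation), but the hard direction (i)$\Rw$(ii) for the map $X^\op\otimes Y\to\V$ is exactly what your proof leaves unproved: one must show that the \emph{conjunction} of the two distributor inequalities, together with the strict-theory machinery (functoriality of the extension of $U$ to $\Mat{\V}$, $\xi\cdot U\xi=\xi\cdot m_Q$, the monoid diagrams for $\otimes$, and the technical fibre condition on $\xi$), forces $\xi(U\psi(\omega))\otimes X^\op(U\pi_1\omega,\sigma)\otimes Y(U\pi_2\omega,y)\leqslant\psi(\sigma,y)$ at \emph{arbitrary} $\omega$. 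Your sentence ``the two families of inequalities are forced to agree'' is an assertion, not an argument, and since the per-condition matching it rests on is false, the step cannot be repaired within your scheme; this is precisely the nontrivial computation carried out in the cited source \cite{Hof_TopTh}.
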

Therefore, each $\Tth$-distributor $\varphi\colon X\kmodto Y$
defines a $(\Ultra,\V)$-functor $\mate{\varphi}\colon Y\to\V^{|X|}$
which factors through the embedding $\widehat{X}\hrw\V^{|X|}$, where
$\widehat{X}=\{\psi\in\V^{|X|}\mid \psi\colon X\kmodto 1\}$ and $1$
denotes the $(\Ultra,\V)$-category $(1,e^{{op}}_1)$.
\[
\xymatrix{Y\ar[r]^{\mate{\varphi}}\ar[dr]_ {\mate{\varphi}}& \V^{|X|}\\ & \widehat{X}\ar@{_(->}[u]}
\]
In particular, for each $\Tth$-category $X$ we have $X(-,-)\colon X\kmodto X$, and therefore obtain the Yoneda $(\Ultra,\V)$-functor
$\yoneda=\mate{X}\colon X\to\widehat{X}$.
The following result is crucial to transport $\V$-categorical ideas into the $\Tth$-setting.
\begin{lemma}\label{YonedaTV}
Let $\psi\colon X\kmodto Z$ and $\varphi\colon X\kmodto Y$ be $\Tth$-distributors. Then, for all $\zeta\in UZ$ and $y\in Y$, $\V^{|X|}(U\mate{\psi}(\zeta),\mate{\varphi}(y)) =(\varphi\homkleisliright\psi)(\zeta,y)$.
\end{lemma}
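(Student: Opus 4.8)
The plan is to evaluate both sides of the claimed identity as genuine elements of $\V$ and to verify that they coincide. First note that, by the adjunction $(-)\kleisli\psi\dashv(-)\homkleisliright\psi$, the $\Tth$-relation $\varphi\homkleisliright\psi$ has type $Z\krelto Y$, so that $(\varphi\homkleisliright\psi)(\zeta,y)$ is meaningful for $\zeta\in UZ$ and $y\in Y$. Likewise, the structure $\V^{|X|}(-,-)\colon U(\V^{|X|})\relto\V^{|X|}$ makes the left-hand side meaningful once we record, via Proposition~\ref{DistasFunTV}, that $\mate\psi\colon Z\to\V^{|X|}$ and $\mate\varphi\colon Y\to\V^{|X|}$ are $(\Ultra,\V)$-functors with $\mate\psi(z)(\upsilon)=\psi(\upsilon,z)$ and $\mate\varphi(y)(\upsilon)=\varphi(\upsilon,y)$ for all $\upsilon\in UX$.

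For the right-hand side I would unfold the definition $\varphi\homkleisliright\psi=\varphi\blackleft(U\psi\cdot m_X^{op})$ together with the formula for the lifting $\blackleft$ in $\Mat{\V}$, obtaining
\[
(\varphi\homkleisliright\psi)(\zeta,y)=\bigwedge_{\upsilon\in UX}\hom\bigl((U\psi\cdot m_X^{op})(\upsilon,\zeta),\,\varphi(\upsilon,y)\bigr),
\]
and then expand $(U\psi\cdot m_X^{op})(\upsilon,\zeta)=\bigvee_{\Upsilon:\,m_X(\Upsilon)=\upsilon}U\psi(\Upsilon,\zeta)$ straight from the definition of $U$ on $\V$-relations and of $m_X^{op}$.

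For the left-hand side I would invoke the explicit description of the internal hom $\V^{|X|}$ and of its $(\Ultra,\V)$-structure computed in \cite{Hof_TopTh}: for $\mathfrak{H}\in U(\V^{|X|})$ and $g\in\V^{|X|}$ the structure is an infimum, over $\upsilon\in UX$, of $\hom$ of the $\upsilon$-component of $\mathfrak{H}$ against $g(\upsilon)$, where the $\upsilon$-component of $\mathfrak{H}$ is obtained by pushing $\mathfrak{H}$ forward along evaluation at $\upsilon$ and applying the convergence $\xi$. Specializing $g=\mate\varphi(y)$ gives $g(\upsilon)=\varphi(\upsilon,y)$, matching the second argument of $\hom$ above; specializing $\mathfrak{H}=U\mate\psi(\zeta)$ reduces, by naturality of $U$ and the computation $\mate\psi(-)(\upsilon)=\psi(\upsilon,-)$, the $\upsilon$-component to $\xi\bigl(U(\psi(\upsilon,-))(\zeta)\bigr)$.

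The crux, and the step I expect to be the main obstacle, is to identify this $\upsilon$-component of $U\mate\psi(\zeta)$ with $\bigvee_{m_X(\Upsilon)=\upsilon}U\psi(\Upsilon,\zeta)$, i.e.\ to prove
\[
\xi\bigl(U(\psi(\upsilon,-))(\zeta)\bigr)=\bigvee_{\Upsilon:\,m_X(\Upsilon)=\upsilon}U\psi(\Upsilon,\zeta).
\]
The inequality $\leqslant$ is immediate by taking $\Upsilon=e_{UX}(\upsilon)$, since $m_X\cdot e_{UX}=1$ forces this principal ultrafilter into the indexing set while $U\psi(e_{UX}(\upsilon),\zeta)$ evaluates to exactly the left-hand term. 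The reverse inequality $\geqslant$, which demands bounding every term $U\psi(\Upsilon,\zeta)$ with $m_X(\Upsilon)=\upsilon$, is precisely where the compatibility of $\xi$ with the monad is needed: here I would apply the technical property imposed on $\xi$ (taking $f=m_X$, with the evaluation maps in the role of $\varphi,\psi$) together with the commutativity of the $\xi$-diagrams for $e$ and $m$, which is exactly the content of the reformulation of $Ur$ via $\xi$ recorded above. Once this identity is established, the two displayed expressions for the left- and right-hand sides agree term by term over $\upsilon\in UX$, which completes the proof.
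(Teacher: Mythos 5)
Your argument is correctly set up as far as typing goes (the codomain typing of $\varphi\homkleisliright\psi$, the unfolding $\varphi\homkleisliright\psi=\varphi\blackleft(U\psi\cdot m_X^{op})$, and the expansion of $(U\psi\cdot m_X^{op})(\upsilon,\zeta)$ are all right, as is your ``$\leqslant$'' half via $\Upsilon=e_{UX}(\upsilon)$), but it fails at exactly the step you flag as the crux, and not because that step is hard to prove: the identity
\[
\xi\bigl(U(\psi(\upsilon,-))(\zeta)\bigr)=\bigvee_{\Upsilon\colon m_X(\Upsilon)=\upsilon}U\psi(\Upsilon,\zeta)
\]
is \emph{false}, even for distributors. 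Take $\V=\dwa$, let $X=Z$ be an infinite discrete topological space, $\psi=X$ the identity $\Tth$-distributor (so $\psi(\upsilon,z)=\unit$ iff $\upsilon=\dot z$), let $\zeta\in UX$ be free, and put $\upsilon=\zeta$. Then $\psi(\zeta,-)$ is constantly $\bot$, so the left-hand side is $\bot$; but $\Upsilon:=Ue_X(\zeta)$ satisfies $m_X(\Upsilon)=\zeta$ and $U\psi(\Upsilon,\zeta)=\unit$ (for $\mathcal{A}\in\Upsilon$ and $B\in\zeta$ the sets $\{z\mid \dot z\in\mathcal{A}\}$ and $B$ both belong to $\zeta$, and any common element $z$ gives $\dot z\in\mathcal{A}$, $z\in B$, $\dot z\,\psi\,z$), so the right-hand side is $\unit$. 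The error originates one step earlier, in your description of the structure of $\V^{|X|}$: it is \emph{not} the pointwise infimum $\bigwedge_{\upsilon\in UX}\hom(\xi\,U\!\ev_\upsilon(\mathfrak{p}),g(\upsilon))$. The structure constructed in \cite{Hof_TopTh} is the largest one making $\ev$ a $\Tth$-functor, namely
\[
\V^{|X|}(\mathfrak{p},g)=\bigwedge\bigl\{\hom\bigl(\xi\,U\!\ev(\mathfrak{w}),\,g(m_X(U\pi_1(\mathfrak{w})))\bigr)\;\big\vert\;\mathfrak{w}\in U(UX\times\V^{|X|}),\ U\pi_2(\mathfrak{w})=\mathfrak{p}\bigr\},
\]
an infimum over \emph{joint} ultrafilters; your formula retains only the terms where $U\pi_1(\mathfrak{w})$ is principal and is in general strictly larger. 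Indeed, in the example above your formula yields $\V^{|X|}(U\yoneda(\zeta),\yoneda(y))=\unit$ for every $y$, contradicting Corollary \ref{YonedaLemmaTV} (full faithfulness of $\yoneda$); for $\V=\dwa$ this is the familiar fact that the filter space $F_0(X)$ does not carry the topology of pointwise convergence.

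Note that the paper itself omits the proof of this lemma, deferring to \cite{Hof_TopTh,Hof_Cocompl}; a proof along the intended lines must keep joint ultrafilters throughout. One rewrites $(\varphi\homkleisliright\psi)(\zeta,y)$ as $\bigwedge_{\Upsilon\in UUX}\hom(U\psi(\Upsilon,\zeta),\varphi(m_X(\Upsilon),y))$, expands each $U\psi(\Upsilon,\zeta)$ by the $\xi$-reformulation of $Ur$ displayed in the paper (a join of $\xi\cdot U\psi(\omega)$ over $\omega\in U(UX\times Z)$ with marginals $\Upsilon$ and $\zeta$), and transports along $f=1_{UX}\times\mate{\psi}$, which satisfies $\ev\cdot f=\psi$ and $\pi_1\cdot f=\pi_1$; for the reverse estimate one uses that $\mathrm{im}(\mate{\psi})\in U\mate{\psi}(\zeta)$, so that $\ev$ may be replaced on a set of $\mathfrak{w}$ by a map dominated as in the technical axiom on $\xi$, which is then applied to this $f$ --- not, as you propose, to $f=m_X$, whose shape does not match the required inequality (and which could not help anyway, since the inequality you aim at is false). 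So the overall architecture of your proof --- comparing the two sides termwise over $\upsilon\in UX$ after marginalizing $U\mate{\psi}(\zeta)$ --- cannot be repaired locally; the comparison has to be organized over $\Upsilon\in UUX$ and joint ultrafilters $\mathfrak{w}$.
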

\begin{corollary}\label{YonedaLemmaTV}
For each $\varphi\in\widehat{X}$ and each $\sigma\in UX$, $\varphi(\sigma)=\V^{|X|}(U\yoneda(\sigma), \varphi)$, that is, $(\yoneda)_*\colon X\kmodto\widehat{X}$ is given by the evaluation map $\ev\colon UX\times\widehat{X}\to\V$. As a consequence, $\yoneda\colon X\to\widehat{X}$ is fully faithful.
\end{corollary}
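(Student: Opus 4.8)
The plan is to obtain both assertions as direct specialisations of Lemma~\ref{YonedaTV}, the only extra ingredient being that the operation $(-)\homkleisliright X$ acts as the identity on distributors. I would first apply Lemma~\ref{YonedaTV} with $\psi:=X(-,-)\colon X\kmodto X$, so that $Z=X$ and $\mate{\psi}=\mate{X}=\yoneda$, and with an arbitrary $\varphi\in\widehat{X}$ viewed as a distributor $\varphi\colon X\kmodto 1$, so that $Y=1$ and $\mate{\varphi}\colon 1\to\widehat{X}$ sends the unique point of $1$ to $\varphi$ itself. Evaluating at $\zeta=\sigma\in UX$ and at the point of $1$, the lemma then reads
\[
\V^{|X|}(U\yoneda(\sigma),\varphi)=(\varphi\homkleisliright X)(\sigma).
\]

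Next I would verify that $\varphi\homkleisliright X=\varphi$ for every distributor $\varphi$. Recall that $(-)\homkleisliright X$ is right adjoint to $(-)\kleisli X$, so that $\gamma\kleisli X\leqslant\varphi$ iff $\gamma\leqslant\varphi\homkleisliright X$. Since $\varphi$ is a distributor we have $\varphi\kleisli X=\varphi$, whence $\varphi\leqslant\varphi\homkleisliright X$. For the reverse inequality, reflexivity $e_X^{op}\leqslant X$ together with monotonicity of Kleisli convolution and the unit law $\gamma\kleisli e_X^{op}=\gamma$ give $\gamma\leqslant\gamma\kleisli X$ for every $\Tth$-relation $\gamma$; applied to $\gamma=\varphi\homkleisliright X$ and combined with the counit inequality $(\varphi\homkleisliright X)\kleisli X\leqslant\varphi$, this yields $\varphi\homkleisliright X\leqslant\varphi$. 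Hence equality holds, and the displayed formula becomes $\varphi(\sigma)=\V^{|X|}(U\yoneda(\sigma),\varphi)$, which is the asserted Yoneda formula.

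Finally I would read off the two consequences. Since $(\yoneda)_*=\widehat{X}\cdot U\yoneda$ and $\widehat{X}$ inherits its structure from $\V^{|X|}$, we get $(\yoneda)_*(\sigma,\varphi)=\widehat{X}(U\yoneda(\sigma),\varphi)=\varphi(\sigma)=\ev(\sigma,\varphi)$, so $(\yoneda)_*$ is the evaluation map. Full faithfulness follows by specialising the Yoneda formula to $\varphi=\yoneda(x)=X(-,x)$: for all $\sigma\in UX$ and $x\in X$ we obtain $\widehat{X}(U\yoneda(\sigma),\yoneda(x))=\yoneda(x)(\sigma)=X(\sigma,x)$, which is exactly the equality defining full faithfulness of $\yoneda$. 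The genuine work is all hidden in Lemma~\ref{YonedaTV}, which I assume; beyond that the one point requiring care is the identity $\varphi\homkleisliright X=\varphi$, where the asymmetry of the $\Tth$-setting forbids appealing to a strict two-sided identity and forces one to use the distributor condition $\varphi\kleisli X=\varphi$ together with the unit laws, as above.
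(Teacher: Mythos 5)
Your proof is correct and follows exactly the route the paper intends for this corollary: specialise Lemma~\ref{YonedaTV} to $\psi=X\colon X\kmodto X$ (so $\mate{\psi}=\yoneda$) and an arbitrary $\varphi\colon X\kmodto 1$, and then use that $X$ is the Kleisli identity on distributors, i.e.\ $\varphi\homkleisliright X=\varphi$, which you rightly verify via the adjunction $(-)\kleisli X\dashv(-)\homkleisliright X$ together with the lax unit law rather than a strict two-sided identity. Your subsequent readings of $(\yoneda)_*$ as the evaluation map and of full faithfulness (by taking $\varphi=\yoneda(x)$) are likewise exactly the intended specialisations.
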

\begin{example}
We consider the quantale $\V=\dwa$. In Example \ref{ExTopSp} we have already seen that this case captures precisely topological spaces and continuous maps. It is shown in \cite{HT_LCls} that a distributor $X\kmodto 1$ corresponds to a (possibly improper) filter on the lattice of open subsets of $X$, and the ``presheaf space'' $\widehat{X}$ is homeomorphic to the space $F_0(X)$ of all such filters, where the sets
\begin{align*}
\{F\in F_0(X)\mid A\in F\}&& \text{($A\subseteq X$ open)}
\end{align*}
form a basis for the topology on $F_0(X)$ (see also
\cite{Esc_InjSp}). Note that $F\leqslant G$ if and only if
$F\supseteq G$ in the underlying ordered set $\ForgetToV(F_0(X))$.
The Yoneda embedding $\yoneda:X\to F_0(X)$ sends each point $x$ to
the filter $\mathcal{N}(x)$ of all open neighbourhoods of $x$.
\end{example}

\subsection{$J$-cocomplete $\Tth$-categories}

As in the case of $\V$-categories, we consider cocompleteness and continuity with respect to chosen distributors. To do so, let $\Mod{J}$ be a subcategory of $\Mod{(\Ultra,\V)}$ such that, for every $(\Ultra,\V)$-functor $f$, $f^*\in J$ and,
for all $\varphi\colon X\kmodto Y\in\Mod{(\Ultra,\V)}$,
\[
(\forall y\in Y\,.\,y^*\kleisli \varphi\in J)\Rw \varphi\in J.
\]
We write $J(X)$ for the full subcategory of $\hat{X}$ defined by all $J$-distributors of type $X\kmodto 1$. A $(\Ultra,\V)$-category $X$ is $J$-cocomplete if $\yoneda:X\to J(X)$ has a left adjoint $\Sup:J(X)\to X$ in the ordered category $\Cat{(\Ultra,\V)}$. By definition, $\Sup:J(X)\to X$ is a $(\Ultra,\V)$-functor such that, for all $x\in X$ and $\Upsilon\in U(JX)$,
\[
 X(U\Sup(\Upsilon),x)=\hat{X}(\Upsilon,\yoneda(x)).
\]
It is worthwhile to mention that any left inverse $(\Ultra,\V)$-functor $\Sup:J(X)\to X$ of $\yoneda_X$ is actually a left adjoint. However, we should also mention the situation slightly differs here from the $\V$-case. As before, the map $\Sup$ gives for each $\psi\in J(X)$ a supremum, i.e.\ $x\in X$ with $x_*=1_X\whiteleft \psi$. But it is \emph{not true} that $X$ is $J$-cocomplete if each $\psi\in J(X)$ has a supremum $x$ in $X$ since the induced map $\Sup:J(X)\to X,\psi\mapsto x$ is in general only a $\V$-functor.
\begin{example}
Let $X$ be a complete ordered set. We define a sub-basis $\mathcal{B}$ for a topology on $X$ as follows: $A\in\mathcal{B}$ whenever $A$ is down-closed and, for any $B\subseteq X$, $\bigwedge B\in A$ implies $B\cap A\neq\varnothing$. One easily verifies that the underlying order of the induced topology is just the order we started with, moreover, $X$ is the only neighbourhood of the top-element of $X$. Hence, each filter $\psi$ (of opens) converges and has indeed a smallest convergence point. To see this, let $B$ be the set of all convergence points of $\psi$, and put $y=\bigwedge B$. Let $A\in\mathcal{B}$ with $y\in A$. Then there is some $x\in B\cap A$ and $A\in\psi$ since $\psi$ converges to $x$. Consequently, $y$ is the smallest convergence point of $\psi$. Therefore each distributor $\psi:X\kmodto 1$ has a supremum in $X$ but $X$ cannot be an injective space if the dual of $X$ is not a continuous lattice.
\end{example}
Hence, a $(\Ultra,\V)$-category $X$ is $J$-cocomplete if and only if each $\psi:X\kmodto 1$ in $\Mod{J}$ has ``continuously'' a supremum. We remark \emph{en passant} that, if one allows distributors in $\Mod{J}$ with arbitrary codomain, then again one has that $X$ is $J$-cocomplete if and only if each $\psi:X\kmodto Y$ in $\Mod{J}$ has a supremum in $X$ (see \cite{Hof_Cocompl,CH_CocomplII}). This is one of the reasons why we prefer to define relative cocompletness with respect to a category $\Mod{J}$ of distributors rather then a choice of presheafs $X\kmodto 1$, for each $X$.

\subsection{$J$-continuous $\Tth$-categories}\label{ContTV}

We come now to our main purpose in this section and introduce $J$-continuous $\Tth$-categories. Due to the difficulties described in the previous subsection, we cannot introduce $J_S(X)$ as in Section \ref{section:JcocoVcat} and therefore define $J$-continuity only for $J$-cocomplete $\Tth$-categories.
\begin{definition}
A $J$-cocomplete $\Tth$-category $X$ is called \emph{$J$-continuous} if the
$\Tth$-functor $\Sup:JX\to X$ has a left adjoint in $\Cat{\Tth}$.
\end{definition}
As in the $\V$-case, such a left adjoint $\Tth$-functor $X\to JX$
corresponds to a $\Tth$-distributor $\ddn:X\kmodto X$ which
necessarily belongs to $J$ and, moreover, must be the lifting $\ddn
= \Sup^* \whiteright \yoneda_*$ of $\yoneda_*:X\modto JX$ along
$\Sup^*:X\modto JX$. However, an immediate problem in generalizing
the way-below relation to an $\Tth$-distributor in an analogous way
to the $\V$-distributor case stems from the fact that in general the
lifting $\whiteright$ between $\Tth$-distributors does not exist. We
deal first with this problem.

\begin{lemma}
Let $\psi\colon UY\relto X$ and $\varphi\colon Z\relto X$ be $\V$-relations, and let  $\varphi\homcompleft\psi\colon UY\relto Z$ the lifting of $\psi$ along $\varphi$ in $\Mat{\V}$.
\[
\xymatrix{X & UY\ar|-{\object@{|}}[l]_{\psi}\ar@{.{>}}|-{\object@{|}}[dl]^{\varphi \homcompleft \psi}\\
  Z\ar|-{\object@{|}}[u]^\varphi}\]
If $\psi$ is a unitary $\Tth$-relation $\psi\colon Y\krelto X$, then so is $\varphi\homcompleft\psi\colon Y\krelto Z$.
\proof
We have to show that
\[
 \varphi\homcompleft\psi\colon |Y|\otimes Z_D\to\V,\;(\nu,z)=\bigwedge_{x\in X}\V(\varphi(z,x),\psi(\nu,x))
\]
is a $(\Ultra,\V)$-functor, where $Z_D$ denotes the free
$\Tth$-category $Z_D=(Z,e_Z^{op})$ on the set $Z$. Since
$\bigwedge:\V^{X_D}\to\V$ is a $(\Ultra,\V)$-functor, it is enough
to show functoriality of
\[
 |Y|\otimes Z_D\otimes X_D\to\V,\;(\nu,z,x)=\V(\varphi(z,x),\psi(\nu,x)).
\]
But this function can be expressed as a composite of $(\Ultra,\V)$-functors
\[
|Y|\otimes Z_D\otimes X_D\to Z_D\otimes X_D\otimes |Y|\otimes X_D\xrightarrow{\;\varphi\otimes\psi\;}
\V_D\otimes\V\to\V.
\]
Note that we use here symmetry of the tensor product $\otimes$ and functoriality of $\Delta_X\colon X_D\to X_D\otimes X_D$.\qed
\end{lemma}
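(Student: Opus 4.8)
The plan is to reduce unitarity of $\varphi\homcompleft\psi$ to a single functoriality statement and then exhibit that statement as a composite of $(\Ultra,\V)$-functors. I first recall that a $\Tth$-relation $\alpha\colon Y\krelto Z$ is unitary exactly when it is a $\Tth$-distributor between the discrete $\Tth$-categories $Y_D=(Y,e_Y^{op})$ and $Z_D=(Z,e_Z^{op})$. Applying Proposition~\ref{DistasFunTV} to $Y_D$ and $Z_D$, this reduces to checking that $\varphi\homcompleft\psi$, viewed as a map $|Y|\otimes Z_D\to\V$, is a $(\Ultra,\V)$-functor; the second functoriality condition of Proposition~\ref{DistasFunTV} is automatic because $Y_D$ is discrete (it corresponds to the right Kleisli identity $(\varphi\homcompleft\psi)\kleisli e_Y^{op}=\varphi\homcompleft\psi$, which always holds). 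Thus it suffices to prove that
\[
\varphi\homcompleft\psi\colon |Y|\otimes Z_D\to\V,\qquad (\nu,z)\mapsto\bigwedge_{x\in X}\V(\varphi(z,x),\psi(\nu,x))
\]
is a $(\Ultra,\V)$-functor.

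Second, I would strip off the infimum. Since $\bigwedge\colon\V^{X_D}\to\V$ is a $(\Ultra,\V)$-functor and infima in $\V^{X_D}$ are formed pointwise, it is enough to establish functoriality of the integrand
\[
|Y|\otimes Z_D\otimes X_D\to\V,\qquad (\nu,z,x)\mapsto\V(\varphi(z,x),\psi(\nu,x));
\]
functoriality of $\varphi\homcompleft\psi$ then follows by transposing this map in the discrete variable $X_D$ and postcomposing with $\bigwedge$.

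Third, I would display this integrand as a composite of $(\Ultra,\V)$-functors. Using symmetry of $\otimes$ together with the diagonal $\Delta_X\colon X_D\to X_D\otimes X_D$ (a $(\Ultra,\V)$-functor), I reorder and duplicate arguments to land in $Z_D\otimes X_D\otimes|Y|\otimes X_D$, then apply $\varphi\otimes\psi$, then the internal hom. The crucial inputs are that $\varphi\colon Z_D\otimes X_D\to\V$ is automatically a $(\Ultra,\V)$-functor because its domain is discrete (so it may be regarded as landing in the discrete $\V_D$), that $\psi\colon|Y|\otimes X_D\to\V$ is a $(\Ultra,\V)$-functor precisely because $\psi$ is unitary, and that $\V(-,-)\colon\V_D\otimes\V\to\V$ is a $(\Ultra,\V)$-functor. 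This yields
\[
|Y|\otimes Z_D\otimes X_D\longrightarrow Z_D\otimes X_D\otimes|Y|\otimes X_D\xrightarrow{\ \varphi\otimes\psi\ }\V_D\otimes\V\longrightarrow\V,
\]
as required.

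The main obstacle is the last input: that the internal hom $\V(-,-)\colon\V_D\otimes\V\to\V$ is a $(\Ultra,\V)$-functor. Putting the discrete $\V_D$ in the contravariant slot makes functoriality there free, so the genuine content is functoriality in the covariant slot, which rests on the compatibility of the Lawson convergence $\xi$ with $\otimes$ guaranteed by the assumption that $(\V,\unit,\otimes)$ is a monoid in compact Hausdorff spaces. Conceptually, this is exactly where the argument sidesteps the fact, noted above, that liftings $\whiteright$ of $\Tth$-distributors need not exist: one never lifts at the level of $\Tth$-distributors but instead lifts the underlying $\V$-relations and verifies that unitarity survives. A secondary, purely bookkeeping, point is the variance juggling in the third step --- arranging for the $X_D$ slot to occur once contravariantly (through $\varphi$) and once covariantly (through $\psi$) while keeping $|Y|$ in covariant position --- which the diagonal and symmetry handle cleanly.
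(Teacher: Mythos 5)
Your proposal is correct and follows essentially the same route as the paper: reduce unitarity via Proposition~\ref{DistasFunTV} to $(\Ultra,\V)$-functoriality of $\varphi\homcompleft\psi\colon|Y|\otimes Z_D\to\V$, strip the infimum using $\bigwedge\colon\V^{X_D}\to\V$, and exhibit the integrand as the composite through $Z_D\otimes X_D\otimes|Y|\otimes X_D\xrightarrow{\varphi\otimes\psi}\V_D\otimes\V\to\V$ using symmetry and the diagonal $\Delta_X$. The only difference is that you spell out details the paper leaves implicit (the discreteness arguments making one Kleisli identity and functoriality of $\varphi$ automatic, and the role of the monoid-in-compact-Hausdorff assumption for $\V(-,-)\colon\V_D\otimes\V\to\V$), which is a welcome elaboration rather than a deviation.
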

\begin{lemma}
Let $\varphi\colon X\krelto Y$ and $\psi\colon Y\krelto Z$ be $\Tth$-relations. Furthermore, assume that $\varphi$ is unitary and $Y$ finite. Then $\psi\kleisli\varphi=\psi\cdot e_Y\cdot\varphi$.
\begin{proof}
Just observe that
\[
\psi\kleisli\varphi=\psi\cdot U\varphi\cdot m_X^{op}=\psi\cdot
e_Y\cdot e_Y^{op}\cdot U\varphi\cdot m_X^{op}=\psi\cdot
e_Y\cdot\varphi.\qedhere
\]
\end{proof}
\end{lemma}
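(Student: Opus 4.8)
The plan is to unfold the definition of Kleisli convolution and then use the finiteness of $Y$ to collapse the detour through $UY$ into the plain composite $\psi\cdot e_Y\cdot\varphi$. By definition $\psi\kleisli\varphi=\psi\cdot U\varphi\cdot m_X^{op}$, so the task is simply to rewrite $\psi\cdot U\varphi\cdot m_X^{op}$ as $\psi\cdot e_Y\cdot\varphi$.

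The key observation is that, since $Y$ is finite, every ultrafilter on $Y$ is principal, so the unit map $e_Y\colon Y\to UY$ is a bijection. Passing to $\V$-relations, the graph relation $e_Y$ and its opposite $e_Y^{op}$ are then mutually inverse, and in particular $e_Y\cdot e_Y^{op}=1_{UY}\colon UY\relto UY$. This is the single point at which finiteness of $Y$ is used, and it is the crux of the argument: for infinite $Y$ the map $e_Y$ fails to be surjective and $e_Y\cdot e_Y^{op}$ only identifies principal ultrafilters, so the reduction below would genuinely change the value.

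Granting this identity, I would compute
\begin{align*}
\psi\kleisli\varphi
&=\psi\cdot U\varphi\cdot m_X^{op}\\
&=\psi\cdot(e_Y\cdot e_Y^{op})\cdot U\varphi\cdot m_X^{op}\\
&=\psi\cdot e_Y\cdot(e_Y^{op}\cdot U\varphi\cdot m_X^{op}),
\end{align*}
where the last equality is just associativity of relational composition. It then remains to recognise the parenthesised factor as the Kleisli convolution $e_Y^{op}\kleisli\varphi$, which equals $\varphi$ because $\varphi$ is unitary; substituting yields $\psi\kleisli\varphi=\psi\cdot e_Y\cdot\varphi$, as required. Everything apart from the middle identity $e_Y\cdot e_Y^{op}=1_{UY}$ is a mechanical unwinding of the definitions, so that identity is the only genuine obstacle.
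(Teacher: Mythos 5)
Your proof is correct and is essentially the paper's own argument: both unfold $\psi\kleisli\varphi=\psi\cdot U\varphi\cdot m_X^{op}$, insert $e_Y\cdot e_Y^{op}=1_{UY}$ (valid precisely because every ultrafilter on the finite set $Y$ is principal, so $e_Y$ is bijective), and then absorb $e_Y^{op}\cdot U\varphi\cdot m_X^{op}=e_Y^{op}\kleisli\varphi=\varphi$ using unitarity of $\varphi$. You merely make explicit the two justifications that the paper's one-line computation leaves implicit.
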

\begin{lemma}
\label{lemma:Ulifting}
For all $\Tth$-distributors $\varphi\colon Y\kmodto X$ and $\psi\colon 1\kmodto X$, $\varphi$ has a lifting along an $\psi$ in $\Mod{(\Ultra,\V)}$ which is given by $\psi\whiteright \varphi = \psi\cdot e_1\blackright \varphi$.
\[
\xymatrix{X & Y\ar@{-_{>}}|-{\object@{o}}[l]_{\varphi}\ar@{.{>}}|-{\object@{o}}[dl]^{\psi\whiteright \varphi}\\
  1\ar@{-_{>}}|-{\object@{o}}[u]^\psi}
\]

\proof Let $\gamma\colon Y\krelto 1$ be an unitary $\Tth$-relation. Then $\psi\kleisli \gamma\leqslant \varphi$ if and only if $\psi\cdot e_1\cdot\gamma\leqslant \varphi$ if and only if $\gamma\leqslant \psi\cdot e_1 \blackright \varphi$. \qed
\end{lemma}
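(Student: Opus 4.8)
The plan is to take the $\V$-relation $\psi\cdot e_1\blackright\varphi$ (the lifting of $\varphi$ along $\psi\cdot e_1$ computed in $\Mat{\V}$) as the candidate, and to show both that it satisfies the universal property of a lifting of $\varphi$ along $\psi$ and that it is a genuine $\Tth$-distributor $Y\kmodto 1$. The crucial feature that makes this particular lifting exist, despite $\psi\kleisli(-)$ having no right adjoint in general, is that the domain of $\psi$ is the one-element $\Tth$-category $1$, which is finite.

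First I would verify the universal property. Let $\gamma\colon Y\krelto 1$ be any unitary $\Tth$-relation. Since the object $1$ interposed between $\gamma$ and $\psi$ is finite and $\gamma$ is unitary, the preceding lemma replaces the Kleisli convolution by ordinary $\V$-relational composition, $\psi\kleisli\gamma=\psi\cdot e_1\cdot\gamma$. The adjunction defining the lifting $\blackright$ in $\Mat{\V}$ then gives $\psi\cdot e_1\cdot\gamma\leqslant\varphi$ iff $\gamma\leqslant\psi\cdot e_1\blackright\varphi$, so that $\psi\kleisli\gamma\leqslant\varphi$ iff $\gamma\leqslant\psi\cdot e_1\blackright\varphi$ for every unitary $\gamma$, which is exactly the defining property of the lifting.

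Next I would show that the candidate lands in $\Mod{(\Ultra,\V)}$. Reading $\varphi\colon UY\relto X$ as the unitary $\Tth$-relation (it is a $\Tth$-distributor, hence unitary) and $\psi\cdot e_1\colon 1\relto X$ as the plain $\V$-relation along which one lifts, the lemma on liftings of unitary $\Tth$-relations yields that $\psi\cdot e_1\blackright\varphi\colon UY\relto 1$ is again unitary, i.e.\ a $\Tth$-relation $Y\krelto 1$.

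The step I expect to be the main obstacle is upgrading ``unitary relation'' to ``$\Tth$-distributor'', since the lifting is asked to exist in $\Mod{(\Ultra,\V)}$. Here I would argue as follows. Every $\Tth$-distributor $\gamma\colon Y\kmodto 1$ is in particular unitary, its right identity being $1=(1,e_1^{op})$, so the equivalence above restricts to distributors. For the candidate itself, put $\gamma_0=\psi\cdot e_1\blackright\varphi$; taking $\gamma=\gamma_0$ in the universal property shows $\psi\kleisli\gamma_0\leqslant\varphi$, so $\gamma_0$ is the largest unitary $\gamma$ with $\psi\kleisli\gamma\leqslant\varphi$. For such a $\gamma$ one has $\psi\kleisli(\gamma\kleisli Y)=(\psi\kleisli\gamma)\kleisli Y\leqslant\varphi\kleisli Y=\varphi$, using that $\varphi$ is a distributor, and $\gamma\kleisli Y$ is again unitary; maximality then forces $\gamma_0\kleisli Y\leqslant\gamma_0$, while monotonicity of Kleisli convolution together with $e_Y^{op}\leqslant Y$ gives $\gamma_0\kleisli Y\geqslant\gamma_0\kleisli e_Y^{op}=\gamma_0$. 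Hence $\gamma_0\kleisli Y=\gamma_0$, and together with unitarity this makes $\gamma_0$ a $\Tth$-distributor $Y\kmodto 1$. The lifting in $\Mod{(\Ultra,\V)}$ is then exactly $\psi\whiteright\varphi=\psi\cdot e_1\blackright\varphi$.
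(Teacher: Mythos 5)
Your proof is correct, and its core equivalence is exactly the paper's argument: $\psi\kleisli\gamma\leqslant\varphi$ iff $\psi\cdot e_1\cdot\gamma\leqslant\varphi$ iff $\gamma\leqslant\psi\cdot e_1\blackright\varphi$, the first step by the preceding lemma (the interposed object $1$ is finite and $\gamma$ is unitary), the second by the adjunction defining $\blackright$ in $\Mat{\V}$. Where you genuinely add to the paper is your final paragraph. The paper's one-line proof quantifies only over \emph{unitary} $\Tth$-relations $\gamma\colon Y\krelto 1$ and never verifies that the candidate $\gamma_0=\psi\cdot e_1\blackright\varphi$ is itself an arrow of $\Mod{(\Ultra,\V)}$: unitarity of $\gamma_0$ (the codomain-side identity law $e_1^{\op}\kleisli\gamma_0=\gamma_0$) follows from the first lemma of the subsection, which the paper evidently means to invoke silently, but the domain-side law $\gamma_0\kleisli Y=\gamma_0$ is addressed nowhere in the printed proof. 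Your maximality argument supplies it correctly: $\gamma_0\kleisli Y$ is again unitary by associativity of $\kleisli$, one has $\psi\kleisli(\gamma_0\kleisli Y)=(\psi\kleisli\gamma_0)\kleisli Y\leqslant\varphi\kleisli Y=\varphi$ since $\varphi$ is a distributor, so maximality of $\gamma_0$ gives $\gamma_0\kleisli Y\leqslant\gamma_0$, while $\gamma_0=\gamma_0\kleisli e_Y^{\op}\leqslant\gamma_0\kleisli Y$ (the strict half of the lax-identity law plus monotonicity of $\kleisli$) gives the reverse inequality. So yours is not a different proof but a completed one: it buys the passage from ``largest unitary relation solving $\psi\kleisli\gamma\leqslant\varphi$'' to ``lifting in the category $\Mod{(\Ultra,\V)}$'' that the paper glosses over. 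One cosmetic point: calling $e_1^{\op}\kleisli\gamma_0=\gamma_0$ a \emph{right} identity clashes with the paper's conventions, where the codomain structure acts on the left of $\kleisli$; the mathematics is unaffected.
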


By analogy with $\V$-distributors, define $v\colon X\kmodto X$ to be
\begin{itemize}
\item \emph{auxiliary}, if $v\leqslant X$;
\item \emph{approximating}, if: $v\in J$, and $X\whiteleft v = X$;
\item \emph{interpolative}, if $v\leqslant v\kleisli v$.
\end{itemize}
We call a $\Tth$-distributor $v:X\kmodto Y$
\begin{itemize}
\item \emph{$J$-cocontinuous} if $\Sup^*\kleisli v = \yoneda_*\kleisli v$.
\end{itemize}
Any approximating $\Tth$-distributor is auxiliary, and any approximating $J$-cocontinuous $\Tth$-distributor is interpolative. Furthermore, the composition of approximating $\Tth$-distributors is again approximating (compare with Lemmata \ref{lem:approxV} and \ref{lem:approxScottV}).

With the same proof as for Proposition \ref{ScottDistFunV} one verifies that $v\colon X\kmodto Y$ is $J$-cocontinuous if and only if the $\Tth$-functor $\mate{v}\colon Y\to\widehat{X}$ is $J$-cocontinuous.

We also define the way-below $\Tth$-distributor $\ddn\colon X\kmodto
X$ as the lifting of $\yoneda_*\colon X\kmodto JX$ along
$\Sup^*\colon X\kmodto JX$, whenever it exists. Since we do not
have in general the way-below distributor `globally', we define its
`local' version at $x\in X$ to be the lifting of $\yoneda_*$ along
$\Sup^*\kleisli x_*$,
\[\xymatrix{JX & X\ar@{-_{>}}|-{\object@{o}}[l]_{\yoneda_*}\ar@{._{>}}|-{\object@{o}}[dl]^{\ \ \Downarrow_x := (\Sup^*\kleisli x_*) \whiteright \yoneda_*}\\
  1\ar@{-_{>}}|-{\object@{o}}[u]^{\Sup^*\kleisli x_*}}\]
which does exist for each $\Tth$-category $X$ and each $x\in X$. Of course, if $\ddn$ exists on $X$, then $\ddn_x=x^*\kleisli\ddn$ for each $x\in X$.
\begin{lemma}
For every $\Tth$-category $X$, the map $\ddn_{-}\colon X\to\widehat{X},\,x\mapsto\ddn_x$ is a $\V$-functor.
\end{lemma}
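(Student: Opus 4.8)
The plan is to show that $\ddn_{-}\colon X\to\widehat{X}$ is a $\V$-functor, i.e.\ that for the underlying $\V$-categories $\ForgetToV X$ and $\ForgetToV\widehat{X}$ one has $\ForgetToV X(x,y)\leqslant \widehat{X}(\ddn_x,\ddn_y)$ for all $x,y\in X$. I would first unfold all three ingredients explicitly. By Lemma~\ref{lemma:Ulifting}, $\ddn_x=(\Sup^*\kleisli x_*)\whiteright\yoneda_* = (\Sup^*\kleisli x_*)\cdot e_1 \blackright \yoneda_*$, so $\ddn_x$ is a presheaf in $\widehat{X}$ built from the ordinary lifting $\blackright$ in $\Mat{\V}$. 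The target hom $\widehat{X}(\ddn_x,\ddn_y)$ is, by the Yoneda-type computation available for $\Tth$-distributors, the infimum over the relevant fibre comparing $\ddn_x$ and $\ddn_y$ pointwise, while $\ForgetToV X(x,y)=X(e_X(x),y)$ is the underlying $\V$-category structure.

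The key step is monotonicity of the local way-below assignment in $x$. Intuitively, a larger $x$ (in the underlying order, i.e.\ a point with $X(e_X(x),y)$ large) should give a larger lower set $\ddn_y$, so I would aim to prove the inequality $X(e_X(x),y)\otimes \ddn_x(\sigma) \leqslant \ddn_y(\sigma)$ for every $\sigma\in UX$, which is exactly the elementwise form of $\V$-functoriality once one takes the infimum over $\sigma$ and uses the internal hom description of $\widehat{X}(\ddn_x,\ddn_y)$. To get this I would reduce the claim to a statement about the defining liftings: since $\ddn_x$ is the largest unitary $\Tth$-relation $\gamma\colon X\krelto 1$ with $(\Sup^*\kleisli x_*)\kleisli\gamma\leqslant\yoneda_*$, it suffices to verify that the candidate relation $\sigma\mapsto X(e_X(x),y)\otimes\ddn_x(\sigma)$ (suitably unitary) still satisfies the defining inequality with $y$ in place of $x$, i.e.\ $(\Sup^*\kleisli y_*)\kleisli\bigl(X(e_X(x),y)\otimes\ddn_x(-)\bigr)\leqslant\yoneda_*$. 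Here the $\Tth$-functoriality of $\Sup$ together with transitivity $X(e_X(x),y)\otimes X(\upsilon,x)\leqslant\dots$ lets me absorb the factor $X(e_X(x),y)$ and compare $\Sup^*\kleisli y_*$ with $\Sup^*\kleisli x_*$, after which the defining property of $\ddn_x$ closes the estimate.

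Concretely, I expect to use that $x\mapsto x_*$ is monotone and that $\Sup^*$ composed on the left respects this, so that $X(e_X(x),y)$ controls the discrepancy between $\Sup^*\kleisli x_*$ and $\Sup^*\kleisli y_*$; the transitivity axiom of $X$ and the $\Tth$-functoriality of $\Sup$ supply precisely the inequality needed to pass from the defining property of $\ddn_x$ to that of $\ddn_y$ scaled by $X(e_X(x),y)$. The main obstacle will be bookkeeping with the Kleisli convolution: because composition on the domain side goes through $U$ and $m_X$, the factor $X(e_X(x),y)$ does not commute past the convolution as cleanly as in the plain $\V$-case, and I will need the compatibility of $\otimes$ with $\xi$ (the monoid-in-compact-Hausdorff-spaces axiom on $\V$) and the unitarity of the relations involved to move the tensor factor through. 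Once that elementwise inequality is established for all $\sigma\in UX$, taking the infimum and invoking the internal-hom formula for $\widehat{X}$ yields $X(e_X(x),y)\leqslant\widehat{X}(\ddn_x,\ddn_y)$, which is exactly $\V$-functoriality of $\ddn_{-}$.
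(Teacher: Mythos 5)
Your proposal is correct in substance, and its skeleton coincides with the paper's: both proofs reduce the claim to the universal property of the lifting defining $\ddn_y=(\Sup^*\kleisli y_*)\whiteright\yoneda_*$ and close the estimate with the counit $y_*\kleisli y^*\leqslant X$ together with the defining inequality $\Sup^*\kleisli x_*\kleisli\ddn_x\leqslant\yoneda_*$. The difference is in execution, and it matters. The paper never works pointwise: it encodes the scalar $X(x,y)$ as the distributor $y^*\kleisli x_*\colon 1\kmodto 1$, states the goal as $X(x,y)\leqslant\ddn_y\whiteleft\ddn_x$, and then performs two purely formal adjunction moves (first the extension adjunction for $\whiteleft$, then the lifting adjunction of Lemma~\ref{lemma:Ulifting}), arriving at $\Sup^*\kleisli y_*\kleisli y^*\kleisli x_*\kleisli\ddn_x\leqslant\yoneda_*$ in three lines. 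Because every term in sight is a composite of $\Tth$-distributors, unitarity is automatic and no interaction of $\otimes$ with $U$, $m_X$ or $\xi$ ever needs to be checked. Your pointwise rendition, by contrast, creates exactly the obligations you flag yourself: you must verify that the scaled relation $\sigma\mapsto X(e_X(x),y)\otimes\ddn_x(\sigma)$ is unitary (this does go through, via the monoid-in-compact-Hausdorff axiom, since $\xi$ turns $U$ applied to a constant tensor factor back into that factor), and you must push the constant through the Kleisli convolution (dischargeable via the finiteness lemma, since the middle object $1$ is finite and $\ddn_x$ is unitary). So your proof is completable, but the formal calculus buys exemption from all of this bookkeeping; the one identification you miss is precisely $X(x,y)=y^*\kleisli x_*$, which replaces tensor-scaling by composition in $\Mod{\Tth}$. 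A minor inaccuracy: $\Tth$-functoriality of $\Sup$ plays no role in the absorption step --- monotonicity of $\kleisli$ and $y_*\kleisli y^*\leqslant X$ suffice.
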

\begin{proof}
For any $x,y\in X$, we have to show that
\[
 X(x,y)\le\ddn_y\whiteleft \ddn_x.
\]
First note that $X(x,y)=y^*\kleisli x_*$. Now,
\[
y^*\kleisli x_*\leqslant (\Sup^*\kleisli
y_*\whiteright\yoneda_*)\whiteleft (\Sup^*\kleisli
x_*\whiteright\yoneda_*)
\]
if and only if
\[
y^*\kleisli x_*\kleisli (\Sup^*\kleisli
x_*\whiteright\yoneda_*)\leqslant \Sup^*\kleisli
y_*\whiteright\yoneda_*,
\]
which in turn is equivalent to
\[
\Sup^*\kleisli y_*\kleisli y^*\kleisli x_*\kleisli (\Sup^*\kleisli
x_*\whiteright\yoneda_*)\leqslant \yoneda_*;
\]
and this is indeed true since $y_*\kleisli y^*\leqslant X$.
\end{proof}
\noindent So far we are not able to prove or disprove that $\ddn_{-}$ is a $\Tth$-functor. Of course, $\ddn_{-}$ is a $\Tth$-functor if $X$ is $J$-continuous, since in this case $\ddn_{-}=\mate{\ddn}$.
\begin{proposition}
A $J$-cocomplete $\Tth$-category $X$ is $J$-continuous if and only if $\ddn_{-}$ is a $\Tth$-functor and, for each $x\in X$, $\ddn_x\in J(X)$ and $X\whiteleft \ddn_x = x_*$.
\end{proposition}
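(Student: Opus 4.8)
The plan is to recognise the statement as the exact $\Tth$-counterpart of the equivalence (i)$\Leftrightarrow$(ii) of Theorem~\ref{thm:cont}, the only new feature being that in the $\Tth$-setting the way-below distributor need not exist globally; the hypothesis that $\ddn_{-}$ is a $\Tth$-functor is precisely what forces the local data $\{\ddn_x\}_{x\in X}$ to assemble into an honest distributor. Indeed, $\ddn_{-}\colon X\to\widehat{X}$ is always a $\V$-functor and $\ddn_{-}(x)=\ddn_x$ is the local way-below ideal. If moreover $\ddn_{-}$ is a $\Tth$-functor and each $\ddn_x\in J(X)$, then $\ddn_{-}$ corestricts to a $\Tth$-functor $X\to J(X)$ which, by Proposition~\ref{DistasFunTV}, equals $\mate{\ddn}$ for a unique $\Tth$-distributor $\ddn\colon X\kmodto X$ with $\ddn_x=x^*\kleisli\ddn$; the closure property of $\Mod{J}$ then gives $\ddn\in J$. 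Since $X$ is $J$-cocomplete, $J(X)$ plays the role of $J_SX$, and the supremum characterisation (for $\psi\in J(X)$, $\Sup\psi=x$ iff $X\whiteleft\psi=x_*$) shows that $X\whiteleft\ddn_x=x_*$ is literally $\Sup(\ddn_x)=x$, i.e.\ $\Sup\cdot\ddn_{-}=1_X$. Hence, by the evident $\Tth$-analogue of Lemma~\ref{charapproxV}, the two pointwise conditions together say exactly that \emph{$\ddn$ is approximating}, and the proposition reduces to: $X$ is $J$-continuous iff $\ddn_{-}$ is a $\Tth$-functor whose associated distributor $\ddn$ is approximating.

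For the forward direction, if $X$ is $J$-continuous then $\Sup\colon J(X)\to X$ has a left adjoint, which is a $\Tth$-functor and, as already noted, coincides with $\mate{\ddn}=\ddn_{-}$ for the (now globally existing) way-below distributor $\ddn=\Sup^*\whiteright\yoneda_*$; thus $\ddn_{-}$ is a $\Tth$-functor. The adjunction $\mate{\ddn}\dashv\Sup$ forces $\ddn$ to be approximating, exactly as in the derivation opening Section~\ref{section:JcocoVcat} and in Theorem~\ref{thm:cont}(i)$\Rightarrow$(ii); unwinding ``$\ddn$ approximating'' pointwise returns $\ddn_x\in J(X)$ and $X\whiteleft\ddn_x=x_*$.

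For the converse I would, given that $\ddn$ is approximating with $\Sup\cdot\ddn_{-}=1_X$, reconstruct the adjunction $\ddn_{-}\dashv\Sup$ by transporting the chain (ii)$\Rightarrow$(iii)$\Rightarrow$(iv)$\Rightarrow$(i) of Theorem~\ref{thm:cont}. Since the composition of approximating $\Tth$-distributors is again approximating, $\ddn\kleisli\ddn$ is approximating; as $\ddn$ is the way-below distributor, hence the smallest approximating distributor, one gets $\ddn\leqslant\ddn\kleisli\ddn$, so $\ddn$ is interpolative and therefore $J$-cocontinuous. Then $\ddn_{-}=\mate{\ddn}$ is a $J$-cocontinuous $\Tth$-functor splitting $\Sup$, and the $\Tth$-analogue of Lemma~\ref{lemma:SplitIsAdjoint} yields $\ddn_{-}\dashv\Sup$, i.e.\ $X$ is $J$-continuous.

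The hard part sits in this last step: the interpolativity argument relies on $\ddn$ being the \emph{global} way-below distributor (the smallest approximating distributor, equivalently the largest $v$ with $\Sup^*\kleisli v\leqslant\yoneda_*$), yet the defining lifting $\Sup^*\whiteright\yoneda_*$ along $\Sup^*\colon X\kmodto J(X)$ need not exist in $\Mod{\Tth}$, only the local liftings $\ddn_x$ do, by Lemma~\ref{lemma:Ulifting}. I would therefore have to verify that the distributor $\ddn$ assembled from the $\ddn_x$ genuinely realises this global lifting (equivalently that $\Sup^*\kleisli\ddn\leqslant\yoneda_*$ and is maximal with this property), so that Lemmas~\ref{lemma:AppImpliesLeqWaybelow}--\ref{lemma:IntLeqWaybelowImpliesScott} transfer. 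A clean alternative that sidesteps this identification is to argue directly in the locally ordered $2$-category $\Cat{\Tth}$: there $\ddn_{-}\dashv\Sup$ amounts to the two inequalities $1_X\leqslant\Sup\cdot\ddn_{-}$ (already an equality) and $\ddn_{-}\cdot\Sup\leqslant 1_{J(X)}$, the latter unwinding to $\ddn_{\Sup\psi}\leqslant\psi$ for every $\psi\in J(X)$ --- the abstract form of the domain-theoretic fact that the way-below ideal of $x$ is contained in $\psi$ whenever $x=\Sup\psi$ --- which can be checked straight from the local lifting formula for $\ddn_{\Sup\psi}$.
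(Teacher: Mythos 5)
Your closing ``clean alternative'' is exactly the paper's own proof, so you ended in the right place. The paper dismisses necessity as clear (as you argue: a left adjoint to $\Sup$ corresponds to a distributor $\ddn$ with $\ddn_x=x^*\kleisli\ddn$, whence $\ddn_{-}=\mate{\ddn}$ is a $\Tth$-functor and the pointwise conditions unwind from the adjunction), and for sufficiency it argues directly in the ordered category $\Cat{\Tth}$: the hypotheses say $\ddn_{-}$ is a $\Tth$-functor of type $X\to J(X)$ with $\Sup\cdot\ddn_{-}\cong 1_X$, and the remaining adjunction inequality $\ddn_{-}\cdot\Sup\leqslant 1_{J(X)}$ is verified pointwise from the explicit local-lifting formula coming from Lemma~\ref{lemma:Ulifting}: for $\psi\in J(X)$ and $\nu\in UX$,
\[
\ddn_{\Sup\psi}(\nu)=\bigwedge_{\varphi\in J(X)}Q\bigl(X(\dot{\Sup\psi},\Sup\varphi),\varphi(\nu)\bigr)\leqslant Q\bigl(X(\dot{\Sup\psi},\Sup\psi),\psi(\nu)\bigr)\leqslant\psi(\nu),
\]
the last step by reflexivity, $\unit\leqslant X(\dot{\Sup\psi},\Sup\psi)$. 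This one-line computation is precisely the step you deferred with ``can be checked straight from the local lifting formula,'' and it is all that is needed; no identification of the assembled $\ddn$ with a global lifting $\Sup^*\whiteright\yoneda_*$ ever enters.

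You were also right to distrust your primary route, and your diagnosis of where it stalls is accurate. Transporting (ii)$\Rightarrow$(iii)$\Rightarrow$(iv)$\Rightarrow$(i) of Theorem~\ref{thm:cont} needs $\ddn$ to be minimal among approximating distributors, but in the $\Tth$-setting this minimality is only available locally ($\ddn_x\leqslant v_x$ when $v$ is approximating \emph{at} $x$); your interpolativity step would therefore require $\ddn\kleisli\ddn$ to be approximating at each $x$, whereas the composition lemma yields only global approximation, and the paper deliberately keeps these two notions a priori distinct (they appear as separate items in its final equivalence theorems). Likewise $\Sup^*\kleisli\ddn\leqslant\yoneda_*$ for the assembled $\ddn$, needed to invoke the interpolative-implies-cocontinuous lemma, is not automatic from the local liftings alone. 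One small imprecision in your reduction: the two pointwise hypotheses say, by the proposition the paper states immediately after this one, that $\ddn$ is approximating \emph{at each} $x$ --- not that it is globally approximating ($X\whiteleft\ddn=X$). This does no harm, since the direct argument uses only $\Sup\cdot\ddn_{-}\cong 1_X$, but it is another reason the detour through ``approximating $+$ interpolative'' is the wrong decomposition here.
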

\begin{proof}
Clearly, the conditions are necessary. Assume now that $\ddn_{-}$ is
a $\Tth$-functor and $\ddn_x\in J(X)$ and $X\whiteleft \ddn_x = x_*$
for each $x\in X$. Hence $\ddn_{-}$ is of type $X\to J_SX$ and we
have $\Sup\cdot\ddn_{x}\cong x$. Let now $\psi\in J_SX$. Then
\[
\ddn_{\Sup\psi}(\nu) = \bigwedge_{\varphi\in J_SX}
Q(X(\dot{\Sup\psi},\Sup\varphi), \varphi(\nu)) \le
Q(X(\dot{\Sup\psi},\Sup\psi), \psi(\nu))\le\psi(\nu),
\]
hence $\ddn_{-}\cdot\Sup\leqslant 1_{J_SX}$, and therefore
$\ddn_{-}\dashv\Sup$.
\end{proof}

\noindent In general, for a distributor $v:X\kmodto X$ and $x\in X$,
we consider its local version $v_x:X\kmodto 1$ at $x$ defined as
$v_x := x^*\kleisli v$. Observe that for any two $\Tth$-distributors
$v,w$ of type $X\kmodto X$, if $v_x\leqslant w_x$ for all $x\in X$,
then $v\leqslant w$, since $v_x\leqslant w_x$ iff $v(\nu,x)\leqslant
w(\nu, x)$ for all $\nu$, iff $v\leqslant w$. Furthermore, we call a
$\Tth$-distributor $v\colon X\kmodto X$ with $v\in J$
\emph{approximating at $x\in X$} if $v_x$ satisfies  $X\whiteleft
v_x = x_*$. The counterparts to Lemma \ref{charapproxV} and
Lemma \ref{lemma:AppImpliesLeqWaybelow} read as follows.
\begin{proposition}
A $\Tth$-distributor $v\colon X\kmodto X$ is approximating at $x$,
for every $x\in X$, if and only if its mate $\mate{v}$ is of type
$X\to JX$ and $\Sup\cdot\mate{v}\cong 1_X$.
\end{proposition}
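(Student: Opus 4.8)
The plan is to unwind both sides of the equivalence pointwise and match them term by term, mirroring the proof of Lemma~\ref{charapproxV} in the $\V$-case. The single piece of bookkeeping that drives everything is the identification $\mate{v}(x)=v_x=x^*\kleisli v$ for each $x\in X$, under which the clause ``$\mate{v}$ is of type $X\to JX$'' reads simply as ``$v_x\in J$ for every $x\in X$''.

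With this in hand, the first step is to check that ``$\mate{v}$ is of type $X\to JX$'' is equivalent to $v\in J$, so that the two membership requirements line up. For the forward direction, $x^*\in J$ because $x^*=f^*$ for the $\Tth$-functor $f=x\colon 1\to X$, and hence $v_x=x^*\kleisli v\in J$ since $\Mod{J}$ is a subcategory closed under Kleisli convolution. For the converse I would invoke the defining closure axiom of $J$ for the distributor $v\colon X\kmodto X$, namely $(\forall x\in X\,.\,x^*\kleisli v\in J)\Rw v\in J$, together with $x^*\kleisli v=v_x$.

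The second step treats the supremum clause. By definition, the supremum of a $\psi\in J$ of type $X\kmodto 1$ is the element characterized by $(\Sup\psi)_*=X\whiteleft\psi$; consequently $\Sup\psi\cong x$ holds precisely when $x_*=X\whiteleft\psi$. Taking $\psi=v_x=\mate{v}(x)$ then gives $\Sup(\mate{v}(x))\cong x$ if and only if $X\whiteleft v_x=x_*$, so that $\Sup\cdot\mate{v}\cong 1_X$ is pointwise equivalent to the family of equalities $X\whiteleft v_x=x_*$, $x\in X$. But this is exactly the condition that $v$ be approximating at each $x$. Combining the two steps yields the proposition.

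The only points demanding care---rather than a genuine obstacle---are (i) deriving $v\in J\iff(\forall x\in X\,.\,v_x\in J)$ from precisely the subcategory and closure axioms imposed on $\Mod{J}$, and (ii) passing between the isomorphism $\Sup\cdot\mate{v}\cong 1_X$ and the strict lifting equalities $X\whiteleft v_x=x_*$ via the pointwise characterization of suprema. Once $\mate{v}(x)=v_x$ is noted, neither presents real difficulty, and the $\Tth$-argument runs entirely parallel to its $\V$-counterpart.
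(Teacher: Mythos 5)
Your proposal is correct and follows essentially the route the paper intends: the proposition is stated there without proof as the $\Tth$-counterpart of Lemma~\ref{charapproxV}, and your pointwise unwinding via $\mate{v}(x)=v_x=x^*\kleisli v$, together with the characterization of $\Sup\psi$ by $(\Sup\psi)_*=X\whiteleft\psi$, is exactly that argument transplanted to the $\Tth$-setting (where $J$-cocompleteness of $X$ makes $\Sup\colon JX\to X$ available, so no $J_S$ is needed). The one extra piece of bookkeeping you supply---deriving $v\in J\iff(\forall x\in X\,.\,v_x\in J)$ from $x^*\in J$ and closure of $\Mod{J}$ under $\kleisli$ in one direction, and from the axiom $(\forall x\in X\,.\,x^*\kleisli v\in J)\Rw v\in J$ in the other---is precisely what is needed here, since ``approximating at $x$'' was defined only for $v\in J$ while the right-hand side only records $v_x\in J$ pointwise, and you handle it correctly.
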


\begin{lemma}
If $v\colon X\kmodto X$ is approximating at $x\in X$, then
$\ddn_x\leqslant v_x$. \proof $\ddn_x(\nu) = \bigwedge_{\varphi\in
J_SX} Q(X(\dot{x},\Sup\varphi), \varphi(\nu)) \leqslant
Q(X(\dot{x},\Sup v_x), v_x(\nu)) = v_x(\nu)$.\qed
\end{lemma}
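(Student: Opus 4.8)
The strategy mirrors the proof of Lemma~\ref{lemma:AppImpliesLeqWaybelow}, but since the global lifting $\ddn$ need not exist I would carry everything out locally at $x$, working with the pointwise description of $\ddn_x$. First I would make that description explicit. By definition $\ddn_x = (\Sup^*\kleisli x_*)\whiteright\yoneda_*$, and Lemma~\ref{lemma:Ulifting} rewrites this lifting in $\Mod{\Tth}$ as the ordinary $\Mat{\V}$-lifting $((\Sup^*\kleisli x_*)\cdot e_1)\blackright\yoneda_*$. Unfolding the elementwise formula for $\blackright$, together with the evaluation description of $\yoneda_*$ from Corollary~\ref{YonedaLemmaTV}, the formula $\Sup^*(\sigma,\varphi)=X(\sigma,\Sup\varphi)$, and the defining property of $x_*$, yields for every $\nu\in UX$
\[
\ddn_x(\nu)=\bigwedge_{\varphi\in J_SX} Q(X(\dot{x},\Sup\varphi),\varphi(\nu)).
\]
This is exactly the description already used in the proof of the preceding Proposition, so I would simply record it here.

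Next I would extract the consequences of the hypothesis. Since $v\in J$ and $x^*=f^*$ for the $(\Ultra,\V)$-functor $f=x\colon 1\to X$, closure of $J$ under Kleisli convolution gives $v_x=x^*\kleisli v\in J$. Moreover, the condition defining approximation at $x$, namely $X\whiteleft v_x = x_*$, is by the very definition of supremum (with $1_X=X$) the statement that $x$ is a supremum of $v_x$; hence $v_x\in J_SX$ and $\Sup v_x\cong x$. This is the analogue, specialised to the single point $x$, of the first half of the counterpart of Lemma~\ref{charapproxV}.

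With both ingredients in place the proof is a one-line computation: instantiating the infimum above at the admissible index $\varphi=v_x$ gives
\[
\ddn_x(\nu)\ \leqslant\ Q(X(\dot{x},\Sup v_x),v_x(\nu)).
\]
Here $\Sup v_x\cong x$ together with reflexivity forces $X(\dot{x},\Sup v_x)=X(\dot{x},x)=\unit$, because $\unit$ is the greatest element of $\V$; and $Q(\unit,-)$ is the identity on $\V$ since $\unit$ is the tensor unit. Thus the right-hand side collapses to $v_x(\nu)$, so $\ddn_x(\nu)\leqslant v_x(\nu)$ for every $\nu\in UX$, which is precisely $\ddn_x\leqslant v_x$.

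The only genuinely delicate step is the derivation of the pointwise formula for $\ddn_x$ in the first paragraph: one must correctly pass from the Kleisli-convolution lifting $\whiteright$ to the underlying $\Mat{\V}$-lifting via Lemma~\ref{lemma:Ulifting} and then keep track of the evaluation maps. Once the formula is available, the remainder is immediate, resting only on the observation that the approximation condition is a disguised supremum statement and on $\unit$ being simultaneously top and tensor unit of $\V$.
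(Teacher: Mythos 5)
Your proof is correct and follows essentially the same route as the paper's: both establish (or, in the paper's case, silently use) the pointwise formula $\ddn_x(\nu)=\bigwedge_{\varphi\in J_SX} Q(X(\dot{x},\Sup\varphi),\varphi(\nu))$ and then instantiate the infimum at $\varphi=v_x$, using that the approximation condition $X\whiteleft v_x=x_*$ says exactly that $v_x$ is a $J$-ideal with supremum $x$, so the bound collapses to $v_x(\nu)$ via $X(\dot{x},x)=\unit$ and $Q(\unit,-)=1_\V$. The only difference is that you spell out the derivation of the pointwise formula via Lemma~\ref{lemma:Ulifting} and the membership $v_x\in J_SX$, which the paper leaves implicit.
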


Hence, if the way-below distributor exists, it is smaller then any approximation distributor. In particular, $\ddn$ is necessarily auxiliary. As in the $\V$-case we deduce:
\begin{corollary}
If $\ddn$ exists and is approximation, then $\ddn$ is interpolative.
\end{corollary}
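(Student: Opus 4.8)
The plan is to replay, in the $\Tth$-setting, the one-line argument that established the corresponding $\V$-case corollary, replacing ordinary distributor composition $\cdot$ by Kleisli convolution $\kleisli$ and invoking the two $\Tth$-analogues already recorded in this subsection. Under the standing hypothesis, $\ddn\colon X\kmodto X$ exists and is approximating, so there is nothing to set up beyond assembling the pieces.

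First I would use the fact, noted just above as the $\Tth$-counterpart of Lemma~\ref{lem:approxV}, that the Kleisli convolution of two approximating $\Tth$-distributors is again approximating. Applying this to $\ddn$ with itself, I conclude that $\ddn\kleisli\ddn$ is approximating. Next I would invoke the preceding Lemma (``if $v$ is approximating at $x$, then $\ddn_x\leqslant v_x$'') together with the pointwise criterion observed earlier --- namely, that $v_x\leqslant w_x$ for all $x\in X$ forces $v\leqslant w$ --- to deduce that the way-below distributor lies below \emph{every} approximating $\Tth$-distributor. Taking that approximating distributor to be $\ddn\kleisli\ddn$ yields
\[
\ddn\ \leqslant\ \ddn\kleisli\ddn,
\]
which is precisely the statement that $\ddn$ is interpolative.

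I do not anticipate any genuine obstacle: the real content has been absorbed into the two prior facts, that Kleisli convolution preserves approximating distributors and that the global way-below distributor, when it exists, is the least approximating distributor. The only point warranting a little care is that the ``least approximating'' property was phrased locally (as $\ddn_x\leqslant v_x$ for each $x$), so I would make the local-to-global passage explicit via the pointwise criterion above before reading off the global inequality $\ddn\leqslant\ddn\kleisli\ddn$.
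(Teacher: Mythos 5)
Your proof is correct and matches the paper's intended argument: the paper simply says ``As in the $\V$-case we deduce,'' and the $\V$-case proof is exactly your two steps --- $\ddn\kleisli\ddn$ is approximating because Kleisli convolution preserves approximating distributors, and $\ddn$ is below every approximating distributor (via the local lemma $\ddn_x\leqslant v_x$ together with the pointwise criterion), so $\ddn\leqslant\ddn\kleisli\ddn$. Your explicit handling of the local-to-global passage is, if anything, slightly more careful than the paper, which states the ``least approximating'' property globally without spelling out that a globally approximating $v$ is approximating at each $x$.
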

\begin{lemma}
Let $v\colon X\kmodto X$ be auxiliary and $J$-cocontinuous. Then, for each $x\in X$, $v_x\le\ddn_x$. Hence, if the way-below distributor $\ddn$ exists, then $v\le\ddn$.
\end{lemma}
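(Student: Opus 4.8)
The plan is to reproduce, \emph{locally at each $x$}, the short argument used for Lemma~\ref{lemma:AuxSCImpliesLeqWaybelow} in the $\V$-case, since in the $\Tth$-setting the global lifting $\Sup^*\whiteright\yoneda_*$ defining $\ddn$ need not exist and we only have the local liftings $\ddn_x$ at our disposal. The first step is to recast the goal using the universal property of the local lifting from Lemma~\ref{lemma:Ulifting}, applied with common codomain $JX$: since $\ddn_x=(\Sup^*\kleisli x_*)\whiteright\yoneda_*$, a unitary distributor $\gamma\colon X\kmodto 1$ satisfies $\gamma\le\ddn_x$ if and only if $(\Sup^*\kleisli x_*)\kleisli\gamma\le\yoneda_*$. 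Taking $\gamma=v_x$ (which is a distributor, hence unitary), the entire statement reduces to the single inequality
\[
(\Sup^*\kleisli x_*)\kleisli v_x\le\yoneda_*.
\]

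To establish it I would unfold $v_x=x^*\kleisli v$ and use associativity of the Kleisli convolution together with the adjunction $x_*\dashv x^*$ in $\Mod{\Tth}$, which yields $x_*\kleisli x^*\le X$. Monotonicity of $\kleisli$ and the identity laws then give the chain
\[
(\Sup^*\kleisli x_*)\kleisli v_x=\Sup^*\kleisli(x_*\kleisli x^*)\kleisli v\le\Sup^*\kleisli v=\yoneda_*\kleisli v\le\yoneda_*\kleisli X=\yoneda_*,
\]
where the middle equality is exactly $J$-cocontinuity of $v$ and the last inequality uses auxiliarity $v\le X$. This is the faithful local analogue of the $\V$-estimate $\Sup^*\cdot v\le\yoneda_*\cdot v\le\yoneda_*\cdot X=\yoneda_*$, and it delivers $v_x\le\ddn_x$ for every $x$.

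For the concluding clause I would invoke the fact, already noted above, that whenever the global way-below distributor exists one has $\ddn_x=x^*\kleisli\ddn$ for each $x$. The inequality just proved then reads $x^*\kleisli v\le x^*\kleisli\ddn$ for all $x$; by the pointwise-determination observation recorded before the lemma (namely $v_x\le w_x$ for all $x$ implies $v\le w$), this upgrades to $v\le\ddn$.

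The main obstacle is not computational depth but careful bookkeeping: one must keep track of the composition order in the Kleisli convolution and make sure it is the common-codomain ($JX$) instance of Lemma~\ref{lemma:Ulifting} that is invoked, so that the passage between $v_x\le\ddn_x$ and $(\Sup^*\kleisli x_*)\kleisli v_x\le\yoneda_*$ is legitimate. The conceptual heart, forced on us because the global lifting is unavailable, is the collapse $x_*\kleisli x^*\le X$: this is precisely what allows the purely local computation to reconstruct the global $\V$-estimate, with everything else reducing to associativity, monotonicity and the identity laws for $\kleisli$.
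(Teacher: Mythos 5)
Your proof is correct and takes essentially the same approach the paper intends (the paper states this lemma without an explicit proof): you localize the $\V$-case computation of Lemma~\ref{lemma:AuxSCImpliesLeqWaybelow} via the universal property of the lifting from Lemma~\ref{lemma:Ulifting}, using $x_*\kleisli x^*\leqslant X$, associativity and monotonicity of $\kleisli$, $J$-cocontinuity and auxiliarity of $v$, and then pass from $v_x\leqslant\ddn_x$ to $v\leqslant\ddn$ via $\ddn_x=x^*\kleisli\ddn$ and the paper's pointwise-determination remark. No gaps.
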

\begin{lemma}
Let $v\colon X\kmodto X$ be interpolative such that $\Sup^*\kleisli v\le\yoneda_*$. Then $v$ is $J$-cocontinuous.
\end{lemma}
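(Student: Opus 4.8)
The plan is to mirror the proof of Lemma~\ref{lemma:IntLeqWaybelowImpliesScott}, replacing ordinary composition $\cdot$ of $\V$-distributors by Kleisli convolution $\kleisli$. The essential observation is that, unlike the definition of $\ddn$ itself, this argument never invokes the lifting operation $\whiteright$ --- which is the problematic operation in the $\Tth$-setting --- but uses only that $\Mod{\Tth}$ is an ordered category, so that $\kleisli$ is associative and monotone in both variables.

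Recall that $v$ is $J$-cocontinuous when $\Sup^*\kleisli v=\yoneda_*\kleisli v$. First I would note that the inequality $\yoneda_*\kleisli v\leqslant\Sup^*\kleisli v$ holds for \emph{every} $\Tth$-distributor $v$, since $\yoneda_*\leqslant\Sup^*$ as $\Tth$-distributors $X\kmodto JX$. This last inequality is the $\Tth$-analogue of the corresponding fact in the $\V$-case and follows from the adjunction $\Sup\dashv\yoneda$ together with the Yoneda Lemma (Corollary~\ref{YonedaLemmaTV}): by Corollary~\ref{YonedaLemmaTV} one has $\yoneda_*=\ev$, while the unit of $\Sup\dashv\yoneda$ gives $\varphi\leqslant\yoneda(\Sup\varphi)$, and full faithfulness of $\yoneda$ then yields $\yoneda_*\leqslant\Sup^*$. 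Monotonicity of $\kleisli$ upgrades this to $\yoneda_*\kleisli v\leqslant\Sup^*\kleisli v$.

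It therefore remains to prove the reverse inequality, and this is where the two hypotheses enter. Using interpolativity $v\leqslant v\kleisli v$ and monotonicity of $\kleisli$ in its right argument, followed by associativity and then the hypothesis $\Sup^*\kleisli v\leqslant\yoneda_*$ in the left argument, I would compute
\[
\Sup^*\kleisli v\;\leqslant\;\Sup^*\kleisli(v\kleisli v)\;=\;(\Sup^*\kleisli v)\kleisli v\;\leqslant\;\yoneda_*\kleisli v .
\]
Combining the two inequalities yields $\Sup^*\kleisli v=\yoneda_*\kleisli v$, that is, $v$ is $J$-cocontinuous.

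I do not expect a genuine obstacle: the whole point is that the argument transfers essentially verbatim once associativity and monotonicity of Kleisli convolution are in hand, both recorded in the subsections on $\Tth$-relations and $\Tth$-distributors. The only step requiring a little care is the automatic inequality $\yoneda_*\leqslant\Sup^*$, where one must work at the level of ultrafilters through Corollary~\ref{YonedaLemmaTV}; but this is exactly parallel to the $\V$-case and, crucially, involves no lifting, which is why this lemma survives unscathed into the $\Tth$-setting.
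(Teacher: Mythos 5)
Your proof is correct and is essentially the paper's own argument: the paper states this $(\Ultra,\V)$-lemma without proof, and its $\V$-analogue (Lemma~\ref{lemma:IntLeqWaybelowImpliesScott}) is proved by exactly your computation $\Sup^*\kleisli v\leqslant\Sup^*\kleisli(v\kleisli v)=(\Sup^*\kleisli v)\kleisli v\leqslant\yoneda_*\kleisli v$, which indeed uses only associativity and monotonicity of composition in the ordered category $\Mod{\Tth}$ and never the problematic lifting. Your explicit verification of the automatic reverse inequality via $\yoneda_*\leqslant\Sup^*$, which the paper leaves implicit, is a correct and harmless completion of the equality.
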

Of course, $\Sup^*\kleisli v\le\yoneda_*$ is equivalent to $v\le\ddn$ assuming that the way-below distributor $\ddn$ exists.
\begin{lemma}
Let $\alpha\colon X\to JX$ be a $J$-cocontinuous $\Tth$-functor
with $\Sup\alpha \cong 1$. Then $\alpha \dashv \Sup$.
\end{lemma}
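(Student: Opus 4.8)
The plan is to mirror the proof of the $\V$-categorical version (Lemma~\ref{lemma:SplitIsAdjoint}), but to work around the fact that in the $\Tth$-setting we cannot freely lift distributors and must instead argue through the local comparison principle established above, namely that $v\leqslant w$ holds as soon as $v_x\leqslant w_x$ for all $x\in X$. So the goal is to show $\alpha\cdot\Sup\leqslant 1_{JX}$ in $\Cat{\Tth}$, since $\Sup\cdot\alpha\cong 1$ is assumed and the two inequalities together give the adjunction $\alpha\dashv\Sup$ (recall that $\Sup=\Sup_X$ is already a left adjoint to $\yoneda$, so the $\Tth$-functor structure is available).

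First I would rewrite what $\alpha\dashv\Sup$ means: because $\Sup$ corestricts $\yoneda$, it suffices to verify the unit inequality $\alpha\cdot\Sup\leqslant 1_{JX}$, i.e.\ that for every $\psi\in J_SX$ one has $\alpha(\Sup\psi)\leqslant\psi$ in the ordered set underlying $\hat X$. The natural tool is the $J$-cocontinuity of $\alpha$ together with $\Sup\alpha\cong 1$. Translating $J$-cocontinuity of the $\Tth$-functor $\alpha$ via the distributor $\underline{\alpha}(\psi)=\psi\cdot\alpha^*$ (in the form used in Definition~\ref{def:distCont}), we get $\alpha(\Sup\psi)=\Sup\underline{\alpha}(\psi)$. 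The point is then to identify $\Sup\underline{\alpha}(\psi)$ with $\psi$ itself, using $\Sup\cdot\alpha\cong 1$: applying the supremum of the underlying $\V$-category $\hat X$ to the distributor $\psi$ precomposed with $\alpha$, and the fact that $\alpha$ splits $\Sup$, should collapse the expression back to $\psi$ by a Yoneda-type computation analogous to the chain in the proof of the preceding Proposition.

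Concretely, the key display would run along the lines used just above for $\ddn_{-}$: for $\nu\in U(JX)$ or $\zeta\in UX$ one computes $\alpha(\Sup\psi)(\zeta)$ as an infimum $\bigwedge_{\varphi\in J_SX}Q(X(\dot{\Sup\psi},\Sup\varphi),\varphi(\zeta))$ and bounds it above by the single term $\varphi=\psi$, giving $Q(X(\dot{\Sup\psi},\Sup\psi),\psi(\zeta))\leqslant\psi(\zeta)$ since $\unit\leqslant X(\dot{\Sup\psi},\Sup\psi)$ by reflexivity; this is exactly the argument form used to conclude $\ddn_{-}\cdot\Sup\leqslant 1$ in the proof of the proposition preceding the local lemmata. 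From $\alpha\cdot\Sup\leqslant 1_{JX}$ and $\Sup\cdot\alpha\cong 1_X$ the adjunction $\alpha\dashv\Sup$ follows at once, and the cited reference \cite{Stu_Dynamics} guarantees this is the genuine enriched adjunction rather than merely an order-theoretic one.

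The main obstacle I anticipate is precisely the one flagged throughout this subsection: I must not appeal to any global lifting $\whiteright$ of $\Tth$-distributors, so every inequality has to be phrased either locally (component at a fixed $x\in X$) or via the closed structure that genuinely exists, namely $(-)\whiteleft\alpha$ and the $|X|$-tensor adjunction $|X|\otimes(-)\dashv(-)^{|X|}$. The delicate step is therefore justifying the passage $\alpha(\Sup\psi)=\Sup\underline\alpha(\psi)$ as a statement in $\Cat{\Tth}$ (not merely in $\Cat{\V}$), which is where the warning after the definition of $J$-cocompleteness bites — the naively defined $\Sup$ is only a $\V$-functor in general. I would handle this by invoking $J$-cocontinuity of $\alpha$ at the level of distributors (Proposition~\ref{ScottDistFunV}'s $\Tth$-analogue, already asserted in the text) so that the identity is read off from $\Sup^*\kleisli\underline\alpha(\psi)=\yoneda_*\kleisli\underline\alpha(\psi)$ rather than from a pointwise supremum, thereby keeping the whole argument inside $\Mod{\Tth}$ and avoiding the missing lifting.
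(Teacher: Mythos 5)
You have the right skeleton --- reduce to the counit inequality $\alpha\cdot\Sup\leqslant 1_{JX}$ (the unit being free from $\Sup\cdot\alpha\cong 1$) and stay inside $\Mod{\Tth}$ --- but your decisive step is a category error. The display $\bigwedge_{\varphi}\V(X(\dot{\Sup\psi},\Sup\varphi),\varphi(\zeta))$ is not a computation of $\alpha(\Sup\psi)(\zeta)$: it is the \emph{definition} of $\ddn_{\Sup\psi}(\zeta)$, i.e.\ of the local lifting $(\Sup^*\kleisli(\Sup\psi)_*)\whiteright\yoneda_*$. A general $J$-cocontinuous splitting $\alpha$ of $\Sup$ admits no such description a priori; the identity $\alpha(x)=\ddn_x$ is exactly what follows (by uniqueness of left adjoints) from the adjunction you are trying to prove, so invoking it is circular. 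You have in effect transplanted the proof of the Proposition characterising $J$-continuity via $\ddn_{-}$, which is a different statement --- symptomatically, your key display never uses $J$-cocontinuity of $\alpha$ at all, only reflexivity $\unit\leqslant X(\dot{\Sup\psi},\Sup\psi)$. Your preliminary claim that $\Sup\underline{\alpha}(\psi)$ should be \emph{identified} with $\psi$ is also false in general: equality would give $\alpha\cdot\Sup=1_{JX}$ and force $\Sup$ to be invertible; only $\leqslant$ holds, and only $\leqslant$ is needed. (Two smaller slips: there is no $J_SX$ in the $\Tth$-setting --- $X$ is assumed $J$-cocomplete, so $\Sup$ is defined on all of $JX$; and $\Sup^*\kleisli\underline{\alpha}(\psi)$ does not typecheck as a use of the cocontinuity hypothesis, which concerns the distributor $v$ with $\mate{v}=\alpha$, not $\underline{\alpha}(\psi)$.)

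The gap is closed as follows, and notably without any lifting of distributors, which is the very point of this section. Let $v\colon X\kmodto X$ be the distributor with $\mate{v}=\alpha$; by the $\Tth$-analogue of Proposition~\ref{ScottDistFunV}, $v$ is $J$-cocontinuous, i.e.\ $\Sup^*\kleisli v=\yoneda_*\kleisli v$. First extract from $\Sup\cdot\alpha\cong 1$ that $v$ is \emph{auxiliary}: by the proposition on local approximation, $X\whiteleft v_x=x_*$ for each $x$, hence $x_*\kleisli v_x\leqslant X$, and therefore $v_x=e_1^{\op}\kleisli v_x\leqslant x^*\kleisli x_*\kleisli v_x\leqslant x^*\kleisli X=x^*$, so $v\leqslant X$ by the local comparison principle. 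Then $\Sup^*\kleisli v=\yoneda_*\kleisli v\leqslant\yoneda_*\kleisli X=\yoneda_*$; since $\Sup^*\kleisli v=\Sup^{\op}\cdot(X\kleisli v)=\Sup^{\op}\cdot v$ evaluates at $(\sigma,\psi)$ to $v(\sigma,\Sup\psi)=\alpha(\Sup\psi)(\sigma)$, while $\yoneda_*$ is evaluation by Corollary~\ref{YonedaLemmaTV}, this reads $\alpha(\Sup\psi)\leqslant\psi$ pointwise --- the counit. (Alternatively your display can be used legitimately: auxiliarity plus $J$-cocontinuity give $v_x\leqslant\ddn_x$ by the lemma in the text, and $\ddn_{\Sup\psi}\leqslant\psi$ by the infimum formula; the bridge missing from your write-up is precisely $\alpha(\Sup\psi)\leqslant\ddn_{\Sup\psi}$, which requires auxiliarity.) Note that the paper prints no proof of this lemma --- the $\V$-case is quoted from \cite{Stu_Dynamics} --- so the comparison above is with the standard argument, of which the $\Tth$-version is the transcription of Lemma~\ref{lemma:AuxSCImpliesLeqWaybelow} plus the observation that $\Sup^*\kleisli v\leqslant\yoneda_*$ \emph{is} the counit condition.
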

\begin{theorem}
Let $v\colon X\kmodto X\in J$. Then the following are equivalent:
\begin{enumerate}
\renewcommand{\theenumi}{\roman{enumi}}
\item $\mate{v}$ is of type $X\to JX$ and $\mate{v}\dashv\Sup$,
\item $v$ is approximating and provides the lifting of $S^*$ along $\yoneda_*$, i.e.\ $v=\ddn$,
\item $v$ is approximating and $J$-cocontinuous,
\item $v$ is approximating at $x\in X$ for every $x\in X$ and $J$-cocontinuous,
\item $v$ is approximating at $x\in X$ for every $x\in X$ and $\mate{v}\colon X\to JX$ is $J$-cocontinuous,
\item for all $\sigma\in UX$ and $\psi\in JX$ we have $\widehat{X}(U\mate{v}(\sigma),\psi) = X(\sigma,\Sup(\psi))$.
\end{enumerate}
\end{theorem}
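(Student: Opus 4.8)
The plan is to run the cycle $(i)\Rw(ii)\Rw(iii)\Rw(iv)\Rw(v)\Rw(i)$ together with the separate equivalence $(i)\Leftrightarrow(vi)$, in each step transporting the corresponding argument of Theorem~\ref{thm:cont} but replacing $\V$-composition by Kleisli convolution and, where the global lifting $\whiteright$ is unavailable, by its local surrogate. Two bookkeeping identities will be used repeatedly: the currying law $X\whiteleft(x^*\kleisli v)=(X\whiteleft v)\whiteleft x^*$, immediate from associativity of $\kleisli$ and the adjunction $(-)\kleisli\alpha\dashv(-)\whiteleft\alpha$, and the equality $X\whiteleft x^*=x_*$, which expresses the adjunction $x_*\dashv x^*$ (from $x_*\kleisli x^*\le X$ and $X\le x^*\kleisli x_*$).

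I would first dispose of $(i)\Leftrightarrow(vi)$, since it merely unwinds the adjunction. In the ordered category $\Cat{\Tth}$ a pair $\mate{v}\dashv\Sup$ of $\Tth$-functors is an adjunction exactly when the induced distributors coincide, $(\mate{v})_*=\Sup^*$, which elementwise is precisely $\widehat{X}(U\mate{v}(\sigma),\psi)=X(\sigma,\Sup\psi)$ for $\sigma\in UX$ and $\psi\in JX$; this is (vi). For $(i)\Rw(ii)$ I would invoke the opening discussion of this subsection: a left adjoint $\mate{v}\dashv\Sup$ with $\mate{v}\colon X\to JX$ forces $v$ to be the global lifting $\ddn=\Sup^*\whiteright\yoneda_*$, so in particular this lifting exists and $v=\ddn$. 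The remaining, and genuinely delicate, point is that $v$ is \emph{globally} approximating, i.e.\ $X\whiteleft v=X$; this is the step I expect to be the main obstacle, precisely because in the $\Tth$-setting one cannot pass from ``approximating at every $x$'' to the global statement by manipulating the (possibly non-existent) lifting. I would circumvent it by computing through (vi): using Lemma~\ref{YonedaTV} with $\varphi=X$ and $\psi=v$ (so that $\mate{X}=\yoneda$), together with fully faithfulness of $\widehat{X}\hrw\V^{|X|}$, one obtains for all $\zeta\in UX$, $y\in X$
\[
(X\whiteleft v)(\zeta,y)=\V^{|X|}(U\mate{v}(\zeta),\yoneda(y))=\widehat{X}(U\mate{v}(\zeta),\yoneda(y))=X(\zeta,\Sup\yoneda(y))=X(\zeta,y),
\]
where the third equality is (vi) at $\psi=\yoneda(y)\in JX$ and the last uses $\Sup\yoneda\cong 1$. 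Hence $X\whiteleft v=X$ and (ii) holds.

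The middle implications are routine transport of the lemmata preceding the theorem. For $(ii)\Rw(iii)$: as $v=\ddn$ is approximating it is interpolative (the corollary to the $\Tth$-analogue of Lemma~\ref{lemma:AppImpliesLeqWaybelow}), and since $v=\Sup^*\whiteright\yoneda_*$ satisfies $\Sup^*\kleisli v\le\yoneda_*$, the interpolation lemma yields that $v$ is $J$-cocontinuous. For $(iii)\Rw(iv)$ I would feed $X\whiteleft v=X$ into the bookkeeping identities: for each $x$, $X\whiteleft v_x=(X\whiteleft v)\whiteleft x^*=X\whiteleft x^*=x_*$, so $v$ is approximating at $x$; $J$-cocontinuity and $v\in J$ are inherited verbatim.

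For $(iv)\Rw(v)$ I would use the $\Tth$-version of Proposition~\ref{ScottDistFunV} to turn $J$-cocontinuity of $v$ into $J$-cocontinuity of $\mate{v}\colon X\to\widehat{X}$, while the $\Tth$-analogue of Lemma~\ref{charapproxV} shows that ``approximating at every $x$'' makes $\mate{v}$ corestrict to $JX$; hence $\mate{v}\colon X\to JX$ is $J$-cocontinuous. Finally $(v)\Rw(i)$: that same characterisation gives $\mate{v}\colon X\to JX$ with $\Sup\cdot\mate{v}\cong 1_X$, and the $\Tth$-analogue of Lemma~\ref{lemma:SplitIsAdjoint} upgrades this $J$-cocontinuous splitting to the adjunction $\mate{v}\dashv\Sup$, which is (i). As $(i)\Leftrightarrow(vi)$ is already established, all six statements are equivalent.
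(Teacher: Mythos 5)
Your proof is correct and takes essentially the approach the paper intends: the theorem is stated there without an explicit proof, the expectation being that one assembles the preceding $\Tth$-lemmata (the analogues of Lemma~\ref{charapproxV}, Lemma~\ref{lemma:SplitIsAdjoint}, Proposition~\ref{ScottDistFunV}, and the interpolation/way-below lemmata) in exact parallel to the proof of Theorem~\ref{thm:cont}, which is precisely what your cycle $(i)\Rw(ii)\Rw(iii)\Rw(iv)\Rw(v)\Rw(i)$ together with $(i)\Leftrightarrow(vi)$ does. You also correctly identify and fill the one step not directly covered by the stated $\Tth$-lemmata --- deriving the \emph{global} equality $X\whiteleft v=X$ via Lemma~\ref{YonedaTV} with $\varphi=X$, $\psi=v$ --- which is exactly the $\Tth$-transcription of the computation the paper uses for (v)$\Rw$(i) in the $\V$-case, and your currying identity $X\whiteleft(x^*\kleisli v)=(X\whiteleft v)\whiteleft x^*$ handles the global-to-local passage soundly.
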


\begin{theorem}
The following are equivalent, for a $J$-cocomplete $\Tth$-category $X$.
\begin{enumerate}
\renewcommand{\theenumi}{\roman{enumi}}
\item $X$ is $J$-continuous,
\item The way-below $\Tth$-distributor $\ddn \colon X\modto X$ exists and is approximating,
\item There exists a $J$-cocontinuous approximating $\Tth$-distributor $v \colon X\modto X$,
\item There exists a $J$-cocontinuous $\Tth$-distributor $v \colon X\modto X$ which is approximating at $x$, for each $x\in X$.
\end{enumerate}
\end{theorem}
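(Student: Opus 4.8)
The plan is to obtain this global characterization as a direct consequence of the preceding theorem, which — for a \emph{fixed} $\Tth$-distributor $v\colon X\kmodto X\in J$ — establishes the equivalence of its clauses (i)--(vi). The idea is to read each clause of the present statement as an existence assertion about a distributor $v$ matching one of those pointwise conditions, and then to transport between them using that theorem. Concretely, I would prove the cycle (i)$\Rw$(ii)$\Rw$(iii)$\Rw$(iv)$\Rw$(i), feeding the appropriate witnessing distributor through the pointwise equivalences at each step.

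For (i)$\Rw$(ii): unfolding $J$-continuity, a left adjoint to $\Sup\colon JX\to X$ is a $\Tth$-functor $X\to JX$, and, as already observed when the way-below distributor was introduced, such a left adjoint corresponds to the lifting $\ddn=\Sup^*\whiteright\yoneda_*$, which therefore exists and lies in $J$. Thus $v:=\ddn$ satisfies clause (i) of the preceding theorem, and its implication (i)$\Rw$(ii) yields that $\ddn$ is approximating; this is exactly (ii).

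For the remaining implications I would simply route the witnessing distributor through the preceding theorem. Given (ii), the distributor $v:=\ddn$ is approximating, hence in $J$, so (ii)$\Rw$(iii) of that theorem makes it $J$-cocontinuous, witnessing (iii). Given (iii), an approximating $J$-cocontinuous $v$ is, by (iii)$\Rw$(iv), also approximating at every $x\in X$, witnessing (iv). Finally, given (iv), a $J$-cocontinuous $v$ that is approximating at each $x$ satisfies clause (iv) of the preceding theorem — note that ``approximating at each $x$'' already presupposes $v\in J$ — so (iv)$\Rw$(i) provides $\mate{v}\colon X\to JX$ with $\mate{v}\dashv\Sup$; hence $\Sup$ has a left adjoint and $X$ is $J$-continuous.

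The only step that is not purely formal is (i)$\Rw$(ii), and the one point to check with care is that the left adjoint supplied by $J$-continuity really is the mate of a $\Tth$-distributor belonging to $J$ — equivalently, that the global way-below distributor $\ddn$ exists. This is precisely where the $\Tth$-case departs from the $\V$-case, since $\ddn$ need not exist in general; here its existence is forced by the adjunction itself, because a left adjoint to $\Sup$ must coincide with the lifting $\Sup^*\whiteright\yoneda_*$. Once this is secured, every other implication is a mechanical application of the pointwise equivalences already established.
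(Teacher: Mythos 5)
Your proof is correct and matches the paper's intended argument: the paper states this theorem without proof, treating it as an immediate consequence of the preceding six-way equivalence together with the observation, made when $J$-continuity of $\Tth$-categories was introduced, that a left adjoint to $\Sup\colon JX\to X$ corresponds to a $\Tth$-distributor in $J$ which must coincide with the lifting $\Sup^*\whiteright\yoneda_*$, so that $\ddn$ exists. Your care on the two delicate points --- that the existence of $\ddn$ is forced by the adjunction rather than by a general lifting construction, and that ``approximating at each $x$'' already includes $v\in J$ --- is exactly what is needed to make the routing through the pointwise theorem legitimate.
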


\begin{example}
We consider the quantale $\V=\dwa$, that this, topological spaces and continuous maps. We start with the absolute case $J=\Mod{\Tth}$. A topological space $X$ is cocomplete if and only if $\yoneda:X\to F_0(X)$ has a left adjoint $\Sup:F_0(X)\to X$ in $\TOP$, which is equivalent to $\ForgetToV(X)^\op$ being a continuous lattice. Here $\Sup$ associates to each filter $F\in F_0(X)$ its smallest convergence point with respect to the order in $\ForgetToV(X)$. Furthermore, the local version $\ddn_x$ of the way-below distributor is given by the filter
\[
\ddn_x=\left\langle\bigcup\{F\in F_0(X)\mid x\le\Sup(F)\}\right\rangle
\]
generated by $\bigcup\{F\in F_0(X)\mid x\le\Sup(F)\}$. A space $X$ is $J$-continuous if and only if $\ddn_{-}:X\to F_0(X)$ is continuous and every $x\in X$ is the smallest convergence point of $\ddn_x$. If $X$ is cocomplete, then continuity of $\ddn_{-}:X\to F_0(X)$ reduces to Scott-continuity of the monotone map $\ddn_{-}:\ForgetToV(X)^\op\to (F_0(X),\subseteq)$ in the usual order-theoretic sense. So far we are not able to give a more elementary topological description of (absolute) continuity in topological spaces, however, we remark that

\begin{itemize}
\item each space of the form $F_0(X)$ is cocomplete and $J$-continuous, and more general, a topological T$_0$ space $X$ is continuous if and only if it is the filter space of a frame (this will be the topic of a forthcoming paper).
\item and therefore every T$_0$-space can be embedded into a cocomplete and continuous space.
\end{itemize}

\smallskip\noindent
We finish this paper by mentioning two more examples.

\smallskip\noindent
For $J$ being the class of all right adjoint distributors, a topological space $X$ is $J$-cocomplete if and only if it is weakly sober \cite{CH_Compl}, and every topological space is $J$-continuous.

\smallskip\noindent
Further possible choices of $J$ are discussed in \cite{CH_CocomplII}. For instance, we may consider the class $J$ of all those $\Tth$-distributors $\varphi\colon X\kmodto Y$ for which $\varphi\kleisli (-):\mathsf{Dist}(1,X)\to\mathsf{Dist}(1,Y)$ preserves certain infima. Note that a distributor $1\kmodto X$ corresponds to a continuous map $X\to\dwa$, which in turn corresponds to a closed subset of $X$. Hence $\mathsf{Dist}(1,X)$ is isomorphic to the lattice of closed subsets of $X$. In particular, we can chose $J=\{\varphi\colon X\kmodto Y\mid \varphi\kleisli (-)\text{ preserves the top element}\}$. Then
\[
 \varphi\in J\iff \forall y\in Y\exists \nu\in UX\,.\,\nu \varphi y.
\]
Hence, a distributor $\varphi\colon X\kmodto 1$ belongs to $J$ if and only if it corresponds to a proper filter. Therefore
\[
\ddn_x=\left\langle\bigcup\{F\in F_0(X)\mid x\le\Sup(F)\text{ and $F$ is proper}\}\right\rangle,
\]
and a continuous map $f:X\to Y$ is $J$-dense precisely if it is dense in the usual topological sense. Consequently, $X$ is $J$-cocomplete if and only if $X$ is densely injective. Finally, $X$ is $J$-continuous if and only if $\ddn_{-}:X\to F_0(X)$ is continuous and, for every $x\in X$, the filter $\ddn_x$ is proper and $x$ is its smallest convergence point.
\end{example}

\bibliographystyle{halpha}

\end{document}